\documentclass[final,1p]{elsarticle}
\graphicspath{{./Figures/}}
\usepackage{amssymb,latexsym,amsmath,amsthm}
\usepackage[colorlinks,linkcolor=blue, anchorcolor=blue, citecolor=blue]{hyperref}
\usepackage[ruled,linesnumbered]{algorithm2e}
\usepackage{color,caption,cases}
\usepackage{bm}
\usepackage{graphicx,graphics}
\usepackage{booktabs}
\usepackage{tabularx}
\usepackage[T1]{fontenc}
\usepackage{multirow, makecell}
\usepackage{subfigure}
\biboptions{numbers,sort&compress} 
\numberwithin{equation}{section}
\allowdisplaybreaks
\newtheorem{theorem}{Theorem}[section]

\newtheorem{lemma}[theorem]{Lemma}

\theoremstyle{definition}

\newtheorem{example}[theorem]{Example}

\theoremstyle{remark}
\newtheorem{remark}[theorem]{Remark}

\newcommand{\f}{\frac}
\newcommand{\p}{\partial}

\newcommand{\mbb}{\mathbb}

\newcommand{\mM}{\mathcal{M}}
\newcommand{\mE}{\mathcal{E}}

\journal{CMAME}
\begin{document}    

\begin{frontmatter}

\title{Fully decoupled, linear and structure-preserving block-centered finite difference methods for the Keller–Segel chemotaxis system on staggered non-uniform grids}

\author[OUC]{Jie Xu} \ead{jxu129@163.com}
\author[OUC,LMM]{Hongfei Fu\corref{Fu}}\ead{fhf@ouc.edu.cn}

\address[OUC]{School of Mathematical Sciences, Ocean University of China, Qingdao 266100, P.R. China}
\address[LMM]{Laboratory of Marine Mathematics, Ocean University of China, Qingdao 266100, P.R. China}

\cortext[Fu]{Corresponding author.}

\begin{abstract}
In this paper, we propose two fully decoupled, linear and structure-preserving block-centered finite difference schemes for the classical Keller–Segel chemotaxis system on staggered non-uniform spatial grids. Both novel schemes are second-order accurate in space; one is first-order accurate in time, while the other achieves second-order temporal accuracy. Moreover, we show that the schemes preserve several inherent physical laws at the discrete level: (i) the positivity of both the cell density and the chemoattractant concentration; (ii) the conservation of total cell mass; and (iii) a discrete energy dissipation property for the first-order scheme. In particular, the temporally first-order scheme unconditionally preserves positivity, mass conservation, and energy dissipation, whereas the second-order scheme ensures positivity under a sufficient (but not necessary) time-step condition. The proposed methods yield more accurate and efficient simulations of chemotactic dynamics, especially in the presence of rapid blow-up phenomena, on specified non-uniform spatial grids. Numerical experiments are conducted to validate the theoretical findings and to illustrate the accuracy and reliability of the proposed schemes.
\end{abstract}

\begin{keyword}
Keller--Segel chemotaxis model \sep staggered non-uniform grids \sep BCFD method \sep mass conservation \sep positivity-preserving  \sep energy dissipation. 

\MSC 35K55 \sep 65M06 \sep 65M12 \sep 65M50 \sep 92C17
\end{keyword}

\end{frontmatter}

\section{ Introduction}
Chemotaxis is the directed movement of cells or organisms in response to a chemical stimulus, commonly modeled by a system of nonlinear partial differential equations (PDEs). In the 1970s, Keller and Segel \cite{KS'70, KS'71} introduced the so-called classical Keller--Segel chemotaxis system, in which the dimensionless model seeks two scalar functions $\rho = \rho(\bm{x}, t)$ and $c = c(\bm{x}, t)$ satisfying \cite{CC'08}:
 \begin{subequations}\label{model:ks}
	\begin{align}
          \p_t \rho  = \Delta \rho-  \nabla \cdot(\rho \nabla c),  
                      &\quad \text{in}~~ \Omega \times(0, T], \label{model:ks:rho}\\
         \varepsilon \p_t c  = \Delta c- \alpha c+ \rho,   &\quad \text{in}~~ \Omega \times(0, T], \label{model:ks:c}
	\end{align}
\end{subequations}
enclosed with no-flux/Neumann boundary conditions:
\begin{equation}\label{model:ks:bc}
	(\nabla \rho - \rho \nabla c) \cdot \bm{n}=0,\quad \nabla c \cdot \bm{n}=0, \qquad\text {on}~~ \p\Omega \times(0, T], 
\end{equation}
and initial conditions:
\begin{equation}\label{model:ks:ic}
\begin{aligned}
   \rho(\bm{x},0)  =\rho^o(\bm{x}), \quad c(\bm{x},0)=c^o(\bm{x}),   \quad \text {in} ~~\Omega,
\end{aligned}
\end{equation}
where  $\Omega $ is a convex, bounded and open domain in $\mbb R^2$, $\bm{n}$ represents the outward unit normal vector on the boundary $\p \Omega$. The unknown variables in \eqref{model:ks} can be interpreted biologically as follows: $\rho$ denotes the cell (or organism) density at position $\bm{x} \in \Omega$ and time $t \in [0, T]$, while $c$ represents the chemoattractant concentration, a chemical signal that induces cell migration. Both cells and the chemoattractant undergo diffusion within the spatial domain.
The parameters $\varepsilon$ and $\alpha$ are both non-negative.
Specifically, $\varepsilon$ describes the response rate of the chemoattractant concentration to the cell density, $\alpha$ is the reaction coefficient of the chemoattractant.
The model is a parabolic-parabolic system when $\varepsilon > 0$ and a parabolic-elliptic system when $\varepsilon = 0$. When $\varepsilon \ll 1$ the model is in a transition regime between the parabolic-parabolic and parabolic-elliptic cases. 
The mathematical theory of the classical Keller--Segel model is extensive; we refer the reader to \cite{GZ'98, CEM'14, CC'08, LWZ'18, HW'01, HP'09} and the references therein. 

The Keller--Segel system \eqref{model:ks}--\eqref{model:ks:ic} possesses three important fundamental physical laws. The first one is the conservation of total mass, i.e.,
\begin{equation}\label{model:ks:MC}
       M_\rho(t) := \int_\Omega \rho(\bm{x}, t)  d\bm{x} = \int_\Omega \rho^o(\bm{x})  d\bm{x}=M_\rho(0), \quad \forall t>0.
\end{equation}
The second physical law of the Keller-Segel model \eqref{model:ks}--\eqref{model:ks:ic}  is the preservation of positivity, that is, if the initial condition $\rho^o > 0$,  then we have $\rho(t) >0$; furthermore, if $c^o > 0$ also holds,  then we further have $c(t) >0$. Third, following the positivity of cell density, we define the free energy 
\begin{equation}\label{energy:continue}
\mE(t)= \mE[\rho(\cdot, t), c(\cdot, t))]
:=\int_{\Omega}\left[\rho \log \rho-\rho-\rho c+\frac{1}{2}|\nabla c|^2 + \frac{1}{2}\alpha c^2 \right] d x.
\end{equation}
Then, the nonlinear system \eqref{model:ks} under the given boundary conditions \eqref{model:ks:bc} can be reformulated into the following mixed conservative and non-conservative gradient flow
\begin{equation}\label{model:gradient}
\rho_t=\nabla \cdot\left(\rho \nabla \frac{\delta \mE}{\delta \rho}\right), \quad \varepsilon c_t=-\frac{\delta \mE}{\delta c},
\end{equation}
which reflects the gradient-flow structure of the model. Due to \eqref{energy:continue}--\eqref{model:gradient}, the following entropy-dissipation law of the Keller-Segel model \eqref{model:ks}--\eqref{model:ks:ic}  holds: 
\begin{equation}\label{model:ks:ED}
\frac{d}{d t} \mE(t)=-\int_{\Omega}\left[\rho|\nabla(\log \rho-c)|^2 + \varepsilon\left|\partial_t c\right|^2\right] d x \le 0, \quad \forall t>0,
\end{equation}
i.e., $\mE(t) \le \mE(s)$ for all $s < t$.

In addition, it is important to note that the production of chemoattractant by cells, to which the cells themselves are attracted, can lead to a phenomenon known as chemotactic collapse, potentially resulting in uncontrolled aggregation and ultimately finite-time blow-up \cite{CC'08, LWZ'18}. Specifically, a finite-time blow-up may occur when the initial density has a total mass exceeding the critical threshold of $8 \pi $ while maintaining a finite second moment. Conversely, if the initial total mass is strictly below this threshold, a continuous solution is guaranteed for all times. This blow-up phenomenon is also widely recognized as a key feature of the classical Keller--Segel model and poses a significant challenge for numerical simulations, making the ability to accurately capture it a rigorous test of any numerical method.

A large number of numerical methods have been developed for the Keller--Segel system \eqref{model:ks}--\eqref{model:ks:ic}, largely because analytical solutions are rarely available. For example, Li et al. \cite{LSY'17} proposed a local discontinuous Galerkin method and obtained optimal convergence rates before blow-up; they also introduced a positivity-preserving limiter to better capture the blow-up time. Xiao et al. \cite{XFH'19} developed a semi-implicit characteristic finite element method for blow-up, pattern formation, and aggregation dynamics, with second-order accuracy in both the $L^2$ and $H^1$ norms. Sulman and Nguyen \cite{SN'19} proposed an adaptive moving-mesh implicit-explicit finite element method, illustrating the effectiveness of non-uniform spatial grids. More recently, Shen and Xu \cite{SX'20} developed bound-preserving and energy-stable Galerkin schemes based on the conservative gradient flow structure. They proved that the resulting first-order scheme is mass conservative, bound preserving, uniquely solvable, and energy dissipative; while the second-order BDF2 scheme is not energy stable. However, these schemes require solving, at each time step, a nonlinear system which is a unique minimizer of a strictly convex functional. Later, utilizing the scalar auxiliary variable (SAV) approach and function transformation, Huang and Shen \cite{HS'21} constructed some temporal semi-discrete schemes, which are linear, bound/positivity-preserving, and unconditionally modified energy stable. At a broader methodological level, Lagrange multiplier technique provides an alternative to the construction of positivity-preserving structure-preserving schemes \cite{CS'22}. In 2025, Wang et al. \cite{WLF'24} proposed a first-order decoupled linear fully discrete finite element method, based on a log-transformation and a mass-recovery step, which is unconditionally positivity-preserving and mass-conserving. Though it is not energy dissipative, the authors proved the energy bound for the scheme, and meanwhile, optimal-order error estimates are also analyzed. Very recently, Ding et al. \cite{DWZ'25} developed a second-order positivity-preserving and energy dissipative finite difference scheme for the Keller--Segel system with various mobilities. This seem to be the first second-order accurate scheme in literature that could achieve both the numerical positivity and original energy dissipation, though the proposed scheme is nonlinear and has to be solved iteratively. Other approaches, including operator splitting methods \cite{TSL'19, M'03}, hybrid finite-volume/finite-difference methods \cite{CE'18}, finite difference methods \cite{S'09, EK'12, HZ'23}, and meshless methods \cite{BGG'20, DA'19}, are also discussed. In the context of more complex chemotaxis-fluid models, fully decoupled, linear, positivity-preserving finite element schemes based on the flux-corrected transport (FCT) algorithm have also been developed and analyzed for the chemotaxis–Stokes equations \cite{HFXW'21,FHW'21}.

Despite this progress, efficient and provably structure-preserving discretizations, especially finite difference methods, on \textit{non-uniform} grids remain limited. In particular, when blow-up occurs, the highly localized solution structures necessitates the use of local mesh refinement to be accurately captured. Otherwise, the computational cost becomes substantial, as very fine uniform grids are required to maintain accuracy. Motivated by this challenge, we recently proposed a mass-conserving block-centered finite difference (BCFD) scheme for the parabolic-parabolic Keller--Segel system on staggered non-uniform spatial grids \cite{XF'25}. In this scheme, the primal variables $\rho$ and $c$ are approximated at cell centers, while their fluxes are discretized at the midpoints of cell edges. This work presents the first investigation of the BCFD method for this model problem on general \textit{non-uniform} grids, establishing rigorous second-order superconvergence for both cell density and chemoattractant concentration. However, the preservation of positivity and energy dissipation are not addressed therein, which constitutes the main aim of the present paper.

To develop structure-preserving BCFD schemes on non-uniform grids, we adopt the Slotboom transformation \cite{Slotboom'73}--an approach widely employed in the discretization of Poisson--Nernst--Planck equations \cite{LW'14, HH'20, DWZ'19}. Specifically, this transformation reformulates the cell density equation \eqref{model:ks:rho} into the following equivalent symmetric form:
\begin{equation}\label{model:rho:rewrite}
	\partial_t \rho=\nabla \cdot\left(\mM \nabla\left(\frac{\rho}{\mM}\right)\right),~~ \mM:={e}^{\log \rho-\frac{\delta \mE}{\delta \rho}}={e}^{c},
\end{equation}
where $\rho/\mM = \rho e^{-c}$ is referred to as a generalized Slotboom variable \cite{Slotboom'73}, and the no-flux boundary condition for $\rho$ is now expressed as: 
\begin{equation}\label{model:rho:rewrite:bc}
\nabla\left(\frac{\rho}{\mM}\right) \cdot \bm{n}=0, \qquad\text {on}~~ \p\Omega \times(0, T].
\end{equation}
Besides, the total free energy \eqref{energy:continue} is now rewritten equivalently as
\begin{equation}\label{energy:continue:rewrite}
	\mE[\rho, c]=\int_{\Omega}\left[\rho \log \left(\frac{\rho}{\mM}\right)-\rho+\frac{1}{2}|\nabla c|^2 + \frac{1}{2}\alpha c^2 \right] d x.
\end{equation}
Note that the equivalent form \eqref{model:rho:rewrite} can be interpreted as a \textit{variable-coefficient} conservative diffusion equation, thereby enabling the construction of efficient positivity-preserving and energy-dissipative schemes.
For example, Liu et al. \cite{LWZ'18} constructed a structure-preserving finite difference scheme for the parabolic--parabolic Keller--Segel system \eqref{model:ks} with $\alpha=0$. Their method combines the
backward Euler discretization in time with central finite differences in space, and preserves mass conservation and positivity at the fully discrete level. Moreover, the scheme is designed from a symmetrized formulation of the density equation and is asymptotic-preserving in the quasi-static limit. Recently, Lu et al. \cite{LCLL'24} developed two alternating direction implicit (ADI) central finite difference schemes for the two-dimensional (2D) model with minimal computational cost, in which the temporal first-order scheme unconditionally preserves positivity, mass conservation and asymptotical energy dissipation laws; while the second-order one conditionally preserves positivity. Hu and Zhang \cite{HZ'23} introduced two positivity-preserving and energy-dissipative finite difference schemes for the parabolic--elliptic Keller--Segel system. 
Both schemes are first-order accurate in time, and second-order and fourth-order accurate in space. The first scheme is proved to be  unconditionally positivity preserving and energy dissipative, while the second one achieves such properties under a mild time-step  constraint. It is worth noting that all these finite difference schemes are formulated on \textit{uniform} spatial grids. 
In this paper, starting from the Slotboom reformulation \eqref{model:rho:rewrite}, we aim to develop two linearized and decoupled BCFD schemes on staggered \textit{non-uniform} grids. The first scheme is temporally first-order and is proven to unconditionally preserve positivity, mass conservation, and a discrete version energy-dissipation law; while the second scheme achieves second-order temporal accuracy but can only preserve positivity and mass conservation. To our knowledge, this work represents the first BCFD framework for the Keller--Segel system that jointly offers non-uniform grid flexibility and multiple structure-preserving properties.

The remainder of this paper is organized as follows. Section \ref{sec:not} introduces key notations and some preliminary results. Section \ref{sec:mc-bcfd} presents the construction of linearized, decoupled BCFD schemes of first- and second-order temporal accuracy for the reformulated system. Structure-preserving numerical analysis, including positivity-preserving, mass conservation, and energy dissipation, is carefully discussed in Section \ref{sec:sp}. Numerical experiments are provided in Section \ref{sec:num} to validate the theoretical findings and demonstrate the reliability of the proposed schemes. Finally, Some conclusions are addressed in Section \ref{sec:conclusion}.

\section{Notations and preliminaries}\label{sec:not}
 Let $N_t$ be a positive integer. Consider the non-uniform temporal partition $ 0 = t_{0} < t_{1} < \ldots < t_{N_t} = T$ with stepsize $ \tau_{n} = t_{n} - t_{n-1} $ for $ 1 \leq n \leq N_t$. Let $\tau= \max_{1 \leq n \leq N_t} \tau_{n}$ be the maximum stepsize. For temporal grid function  $\{\phi^n\}$, define
\begin{align*}
 d_t\phi^{n+1} = \f{\phi^{n+1}-\phi^{n}}{\tau_{n+1}},\quad
 \overline{\phi}^{n+1/2} = \f{\phi^{n+1} + \phi^{n}}{2}.
\end{align*}

For simplicity,  we consider the two-dimensional rectangular domain $\Omega:=(a^x, b^x)\times (a^y, b^y)$. Let $N_x$ and $N_y$ be the number of grids along the $x$ and $y$ coordinates, respectively. Similar to those used in \cite{WW'88}, staggered spatial grids are introduced, where the primal grid points $\Pi_x \times \Pi_y$ are denoted by
\begin{align*}
\Pi_x: \quad a^x=x_{1/2}< \ldots < x_{i-1/2}<x_{i+1/2} < \ldots <  x_{N_x+1/2}=b^x,\\
\Pi_y: \quad a^y=y_{1/2}< \ldots < y_{j - 1/2}<y_{j+1/2} < \ldots <  y_{N_y+1/2}=b^y,
\end{align*}
with grid sizes $\Delta x=\{\Delta x_i=x_{i+1/2}-x_{i-1/2}\mid i=1, \ldots, N_x\}$ and 
$\Delta y=\{\Delta y_j=y_{j+1/2}-y_{j-1/2}\mid j=1, \ldots, N_y\}$, and the auxiliary staggered grid points are denoted by
\begin{align*}
	 \Pi_x^*: \quad x_i= (x_{i-1/2}+x_{i+1/2} ) / 2,  ~~ i=1, \ldots, N_x,\\
     \Pi_y^*: \quad y_j= (y_{j-1/2}+ y_{j+1/2} ) / 2,  ~~ j=1, \ldots, N_y,
\end{align*}
with grid sizes  
$\Delta x_{i+1/2}=x_{i+1}-x_i= (\Delta x_{i+1} +\Delta x_i)/2$ for $i=1,\ldots, N_x-1$ and 
$\Delta y_{j+1/2}=y_{j+1}-y_j= (\Delta y_{j+1} +\Delta y_j)/2$ for $j=1,\ldots, N_y-1$. 
We use $\bm{h_x} := [\Delta x_1, \Delta x_2, \cdots, \Delta x_{N_x}]^\top$ to represent the $N_x$-dimensional column vector along $x$-direction, and similarly, we define $\bm{h_y}:= [\Delta y_1, \Delta y_2, \cdots, \Delta y_{N_y}]^\top$ as the $N_y$-dimensional column vector along $y$-direction. We also use $\Delta x:=\max \Delta x_i $ and $\Delta y:=\max \Delta y_j$ to represent the maximum grid sizes along $x$ and $y$ directions. Let $h:=\max \{\Delta x, \Delta y\}$ and assume that the spatial grid partition is regular, i.e., there exists a positive constant $\sigma$ such that
$ 	\max \{\f{h}{\min_{i } \Delta x_i}, \f{h}{\min_{j } \Delta y_j} \} \le \sigma.
$ 

Moreover, given spatial grid functions $g=\{g_{i, j}\}$, $\hat{g}=\{g_{i+1/2, j}\}$ and $\check{g}=\{g_{i, j+1/2}\}$ respectively defined on $\Pi_x^*\times \Pi_y^*$, $\Pi_x\times \Pi_y^*$ and $\Pi_x^*\times \Pi_y$,  we define
\begin{align*}
	& [d_x g]_{i+1/2, j}=\f{g_{i+1, j}-g_{i, j}}{\Delta x_{i+1/2}}, ~~	
        && [d_y g]_{i, j+1/2}=\f{g_{i, j+1}-g_{i, j}} {\Delta y_{j+1/2}}, \\
	& [D_x \hat{g}]_{i, j}=\f{\hat{g}_{i+1/2, j}-\hat{g}_{i-1/2, j}}{\Delta x_i}, ~~
	&&  [D_y \check{g}]_{i, j}=\f{\check{g}_{i, j+1/2}-\check{g}_{i, j-1/2}}{\Delta y_j}, 
\end{align*}
and denote $\bm{d}g = \left(d_xg, d_yg\right)$. Besides, we introduce the discrete inner products and norms on $\Pi_x^*\times \Pi_y^*$, $\Pi_x\times \Pi_y^*$ and $\Pi_x^*\times \Pi_y$ as follows:
\begin{align*}
	& (f, g)_{\rm M}=\sum_{i=1}^{N_x} \sum_{j=1}^{N_y} \Delta x_i \Delta y_j f_{i, j} g_{i, j},  &&\|f\|_{\rm M}^2 = (f, f)_{\rm M}, \\
	& (f, g)_x=\sum_{i=1}^{N_x-1} \sum_{j=1}^{N_y} \Delta x_{i+1/2} \Delta y_j f_{i+1/2, j} g_{i+1/2, j},  &&\|f\|_x^2 = (f, f)_x, \\
	& (f, g)_y=\sum_{i=1}^{N_x} \sum_{j=1}^{N_y-1} \Delta x_i \Delta y_{j+1/2} f_{i, j+1/2} g_{i, j+1/2},  &&\|f\|_y^2 = (f, f)_y,\\
	& (\bm {f}, \bm {g})_{\rm TM}  = (f^x, g^x )_x+ (f^y, g^y)_y,    &&\|\bm {f}\|_{\rm TM}^2 = (\bm{f}, \bm{f})_{\rm TM}.
\end{align*}

Next,  
for any point $(x, y) \in [x_i, x_{i+1}]\times [y_j, y_{j+1}]$, $i=1,\ldots, N_x-1$, $j=1,\ldots, N_y-1$, we define  $\Pi_h p(x, y)$ as the piecewise bilinear interpolation of $p(x, y)$ with values $\{p_{i,j} = p(x_i,y_j)\}$ such that
\begin{equation}\label{Operator:bi}
	\begin{aligned}
		\Pi_h p(x, y)& =    \f{(x_{i+1}-x)(y_{j+1}-y)}{(x_{i+1}-x_i)(y_{j+1}-y_j)}p_{i, j} 
		+ \f{(x-x_i)(y_{j+1}-y)}{(x_{i+1}-x_i)(y_{j+1}-y_j)} p_{i+1, j} \\
		& \quad + \f{(x_{i+1}-x)(y-y_j)}{(x_{i+1}-x_i)(y_{j+1}-y_j)}  p_{i, j+1} 
		+ \f{(x-x_i)(y-y_j)}{(x_{i+1}-x_i)(y_{j+1}-y_j)} p_{i+1, j+1}.
	\end{aligned}
\end{equation}

Finally, we give some useful lemmas that will play an important role in the subsequent numerical analysis. 
\begin{lemma} [\cite{WW'88}] \label{lemma:Dd}
 Let $q=\{q_{i, j}\}, v=\{v_{i+1/2, j}\}$ and $w=\{w_{i, j+1 / 2}\}$ be any grid functions defined on $\Pi_x^*\times \Pi_y^*$, $\Pi_x\times \Pi_y^*$ and $\Pi_x^*\times \Pi_y$, such that $v_{1/2, j}=$ $v_{N_x+1/2, j}=w_{i, 1 / 2}=w_{i, N_y+1 / 2}=0$. Then there holds
$$
\begin{aligned}
 (q, D_x v)_{\rm M}=-(d_x q, v)_x, ~~ (q, D_y w)_{\rm M}=-(d_y q, w)_y .
\end{aligned}
$$
\end{lemma}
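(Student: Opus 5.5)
The identity is a discrete summation by parts, and I will prove it by rearranging a finite sum (an Abel summation) row by row. I treat only the $x$-identity $(q, D_x v)_{\rm M}=-(d_x q, v)_x$. The $y$-identity follows by exchanging the roles of $(x,i,N_x,\Delta x)$ and $(y,j,N_y,\Delta y)$.

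\textbf{Step 1: cancel the cell width.} Insert the definition of $D_x$ into the cell-centered inner product. The weight $\Delta x_i$ cancels the denominator, giving
\begin{equation*}
(q, D_x v)_{\rm M}=\sum_{j=1}^{N_y}\Delta y_j\sum_{i=1}^{N_x} q_{i,j}\bigl(v_{i+1/2,j}-v_{i-1/2,j}\bigr).
\end{equation*}

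\textbf{Step 2: rearrange the inner sum for fixed $j$.} Shift the index in the second part ($i\to i+1$) and regroup:
\begin{equation*}
\sum_{i=1}^{N_x} q_{i,j}v_{i+1/2,j}-\sum_{i=0}^{N_x-1} q_{i+1,j}v_{i+1/2,j}
= q_{N_x,j}v_{N_x+1/2,j}-q_{1,j}v_{1/2,j}-\sum_{i=1}^{N_x-1}\bigl(q_{i+1,j}-q_{i,j}\bigr)v_{i+1/2,j}.
\end{equation*}

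\textbf{Step 3: drop the boundary terms.} The hypotheses $v_{1/2,j}=v_{N_x+1/2,j}=0$ remove the two boundary terms. In each remaining summand, write $q_{i+1,j}-q_{i,j}=\Delta x_{i+1/2}[d_x q]_{i+1/2,j}$. Multiplying by $\Delta y_j$ and summing over $j$ then gives exactly $-\sum_{i=1}^{N_x-1}\sum_{j}\Delta x_{i+1/2}\Delta y_j [d_xq]_{i+1/2,j}v_{i+1/2,j}=-(d_xq,v)_x$.

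There is no real obstacle in this argument. The only point that needs care is the bookkeeping on the non-uniform grid. The cell width $\Delta x_i$ must cancel against $D_x$, while the dual width $\Delta x_{i+1/2}$ must appear correctly from $d_x$. The index ranges must also match: $i=1,\dots,N_x-1$ in $(\cdot,\cdot)_x$ corresponds exactly to the interior edges that survive once the boundary fluxes vanish.
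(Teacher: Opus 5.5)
Your summation-by-parts argument is correct, including the bookkeeping on the non-uniform grid: $\Delta x_i$ cancels the denominator of $D_x$, the two boundary terms vanish by $v_{1/2,j}=v_{N_x+1/2,j}=0$, and the dual width $\Delta x_{i+1/2}$ comes back from $d_x$ over exactly the range $i=1,\dots,N_x-1$ used in $(\cdot,\cdot)_x$. The paper gives no proof of its own and simply quotes the lemma from Weiser and Wheeler \cite{WW'88}, so your direct discrete Abel summation supplies the standard argument behind that citation.
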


\begin{lemma}[\cite{HZ'23}]\label{lem:M-matrix}
For a real square matrix $A$ with positive diagonal entries and non-positive off-diagonal entries,  it is a non-singular M-matrix if all the row sums of  $A$ are non-negative and at least one row sum is positive.
\end{lemma}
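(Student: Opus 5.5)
The plan is to write $A = D - B$, where $D=\operatorname{diag}(a_{11},\dots,a_{mm})$ has positive entries and $B\ge 0$ entrywise collects the negated off-diagonal entries. Then $A$ is a Z-matrix. A Z-matrix is a nonsingular M-matrix exactly when $A=D(I-G)$ with $G:=D^{-1}B\ge0$ and $\rho(G)<1$. In that case $A^{-1}=(I-G)^{-1}D^{-1}=\sum_{k\ge0}G^kD^{-1}\ge 0$. So everything reduces to proving $\rho(G)<1$.

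First I use the row-sum hypothesis. Row sums of $A$ being nonnegative means $\sum_{j\ne i}|a_{ij}|\le a_{ii}$, so every row sum of $G$ is at most $1$ and $\rho(G)\le\|G\|_\infty\le 1$. Let $r$ be a row with positive row sum. Then row $r$ of $G$ sums to strictly less than $1$.

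The main obstacle is upgrading $\rho(G)\le1$ to $\rho(G)<1$, and this is where care is needed. As literally stated, the hypotheses do not suffice. For example, $\operatorname{diag}\big(\begin{smallmatrix}1&-1\\-1&1\end{smallmatrix},\,1\big)$ satisfies all of them but is singular. I would therefore prove the lemma under the standard additional condition, implicit in \cite{HZ'23}, that $A$ is irreducible. A weaker sufficient condition is that every row index is connected, through a chain of nonzero off-diagonal entries, to a row with positive row sum. This condition holds for the BCFD matrices in this paper, since the five-point stencil on the tensor grid $\Pi_x^*\times\Pi_y^*$ couples every cell to its neighbours, so the associated graph is connected.

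Under this condition, suppose $Gx=\lambda x$ with $|\lambda|=1$ and $x\neq0$. Let $i_0$ be an index where $|x_{i_0}|=\|x\|_\infty$. The chain of inequalities
\[
\|x\|_\infty=|\lambda x_{i_0}|\le\sum_j g_{i_0j}|x_j|\le\|x\|_\infty\sum_j g_{i_0j}\le\|x\|_\infty
\]
forces equality throughout. Hence $|x_j|=\|x\|_\infty$ for every $j$ with $g_{i_0j}>0$, and row $i_0$ of $G$ sums to exactly $1$. Propagating along the connecting chain of nonzero entries, the maximum modulus is attained at every index reachable from $i_0$, in particular at some row $r$ with positive row sum. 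At that row the same chain gives a strict inequality, which is a contradiction. Thus $\rho(G)<1$ (equivalently, this is Taussky's theorem for irreducibly diagonally dominant matrices). Then $I-G$ is invertible with a nonnegative Neumann-series inverse, and $A$ is a nonsingular M-matrix.
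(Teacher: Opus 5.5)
Your proposal is correct. It also goes further than the paper, which gives no proof of this lemma and simply imports it from \cite{HZ'23}.

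Your block-diagonal matrix, a $2\times 2$ block $\left(\begin{smallmatrix}1&-1\\-1&1\end{smallmatrix}\right)$ together with the $1\times 1$ block $1$, is a valid counterexample to the statement as quoted. All hypotheses hold (the row sums are $0,0,1$), yet the determinant vanishes. So an extra hypothesis really is needed. The one you add, irreducibility or more generally a chain of nonzero off-diagonal entries from every row to a row with positive row sum, is the right one.

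Your argument proves this corrected version. You write $A=D(I-G)$ with $G\ge 0$ and $\|G\|_\infty\le 1$. You then use maximum-modulus propagation along the chain to exclude eigenvalues with $|\lambda|=1$, which is Taussky's theorem. The Neumann series then gives $A^{-1}\ge 0$ directly, which is also what the paper actually extracts via Lemma \ref{lem:M-matrix:inverse}.

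A simplification is worth noting. In every place the paper invokes the lemma, all row sums are strictly positive and in fact equal:
\begin{itemize}
\item $A_\rho\bm{1}=\text{vec}(\mM_h^{n+1})$ and $B_\rho\bm{1}=\text{vec}(\mM_h^{n+1})$;
\item $\tilde A_\rho\bm{1}=\bm{1}$;
\item the concentration matrices have row sums $\varepsilon+\alpha\tau_{n+1}$ or $\varepsilon+\alpha\tau_{n+1}/2$.
\end{itemize}
In those cases $\|G\|_\infty<1$ immediately by strict diagonal dominance. Neither irreducibility nor your propagation step is needed, so the paper's positivity conclusions survive even though the lemma as quoted is too strong.

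One caveat on your closing claim that your chain condition holds for the paper's matrices. Grid connectivity of the five-point stencil is not enough. Your condition also requires a row with positive row sum to exist, so for the concentration matrices you need $\varepsilon+\alpha\tau_{n+1}>0$. When $\varepsilon=\alpha=0$ the Neumann matrix has all row sums zero and is singular, and neither version of the lemma applies. This is a case the paper's remark on the parabolic--elliptic setting tacitly excludes.
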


\begin{lemma}[\cite{P'77}]\label{lem:M-matrix:inverse}
Let $A$ be a real square matrix. If $A$ is a non-singular M-matrix, then it is inverse-positive. That is, $A^{-1}$ exists and $A^{-1} \geq 0.$
\end{lemma}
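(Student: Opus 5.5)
Lemma \ref{lem:M-matrix:inverse} is a classical fact (cited from \cite{P'77}), and I would prove it directly from a standard characterization of non-singular M-matrices. I take as definition that $A$ is a non-singular M-matrix if it can be written as
\[
A = sI - B, \qquad B \ge 0 \ (\text{entrywise}), \qquad s > \varrho(B),
\]
where $\varrho(B)$ is the spectral radius of $B$. The plan is to write $A^{-1}$ as a convergent Neumann series of nonnegative matrices.

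First, since $s>\varrho(B)\ge 0$, we may factor $A = s\,(I - B/s)$ with $\varrho(B/s)<1$. Hence $I-B/s$ is invertible: $1$ is not an eigenvalue of $B/s$. Consequently $A^{-1} = s^{-1}(I-B/s)^{-1}$ exists.

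Second, because $\varrho(B/s)<1$, Gelfand's formula gives a norm and an integer $k_0$ with $\|(B/s)^k\|\le q^k$ for all $k\ge k_0$, for some $q<1$. Therefore the Neumann series $\sum_{k\ge 0}(B/s)^k$ converges. Multiplying the partial sums by $(I-B/s)$ telescopes to $I-(B/s)^{m+1}\to I$, so the sum equals $(I-B/s)^{-1}$.

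Third, each term $(B/s)^k$ is entrywise nonnegative, as a product of nonnegative matrices with $s>0$. Entrywise nonnegativity is preserved under limits, so $(I-B/s)^{-1}\ge 0$, and hence $A^{-1}=s^{-1}(I-B/s)^{-1}\ge 0$.

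The only real subtlety is which definition of ``non-singular M-matrix'' is in force. If instead one uses the form produced by Lemma \ref{lem:M-matrix} (positive diagonal, nonpositive off-diagonal entries, suitable row sums), one first has to reach the form $sI-B$. To do so, take $s\ge \max_i a_{ii}$ and $B=sI-A\ge 0$, and then show $\varrho(B)<s$. That step uses the row-sum/irreducibility structure, via Gershgorin or Perron--Frobenius, and it is where the real work lies. I would treat it as part of the standard characterization from \cite{P'77}, rather than reprove it here.
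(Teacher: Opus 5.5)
Your proof is correct. The paper gives no argument of its own and simply cites \cite{P'77}, and your Neumann series $A^{-1}=s^{-1}\sum_{k\ge 0}(B/s)^k\ge 0$, under the definition $A=sI-B$, $B\ge 0$, $s>\varrho(B)$, is the standard proof of this implication.

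The step you defer belongs to Lemma~\ref{lem:M-matrix}, not to this lemma. In the paper's applications (e.g.\ $A_\rho\bm{1}=\text{vec}(\mM_h^{n+1})>0$ and $\tilde A_\rho\bm{1}=\bm{1}$) every row sum is strictly positive, so choosing $s\ge\max_i a_{ii}$ already gives $\varrho(B)\le\|B\|_\infty<s$, and no irreducibility argument is needed.
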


\section{Numerical schemes}\label{sec:mc-bcfd}
In this section, we propose two linear, fully decoupled BCFD schemes for the nonlinear Keller–Segel chemotaxis system \eqref{model:ks}–\eqref{model:ks:ic} on general non-uniform staggered spatial grids, which are first- and second-order accurate in time, respectively. Below, we use $\left\{\rho_h^n, c_h^n\right\}$ with $\rho_h^n=\{\rho_{h,i,j}^n\}$, $c_h^n=\{c_{h,i,j}^n\}$ that defined on $\Pi_x^*\times \Pi_y^*$ to represent the approximations of the exact solutions $\left\{\rho^n, c^n\right\}$ at time level $t^n$ ($0\le n\le M$). Moreover, whenever no confusion is caused, such symbols also refer to the two-dimensional $N_x$-by-$N_y$ matrices.

\subsection{The temporal first-order scheme}
 Noting that the primal variables $\rho$ and $c$ are discretized on $\Pi_x^*\times \Pi_y^*$, while the two different ${\mM}$ in the nonlinear cross-diffusion term $\nabla \cdot\left(\mM \nabla\left(\frac{\rho}{\mM}\right)\right)$ should be approximated on different spatial grids: the former one is discretized on $\Pi_x\times \Pi_y^*$ and $\Pi_x^*\times \Pi_y$ along $x$- and $y$-directions, respectively, while the latter one is approximated on $\Pi_x^*\times \Pi_y^*$.  Thus, we employ the piecewise bilinear interpolation operator $\Pi_h$ defined in \eqref{Operator:bi} to ensure these approximations are well-defined.
 
 The first-order backward Euler type BCFD (BE-BCFD) scheme for the Keller--Segel system in the form \eqref{model:ks:c}  and \eqref{model:rho:rewrite}--\eqref{model:rho:rewrite:bc} is proposed as follows: 
\begin{equation}\label{BE-BCFD}
\left\{
	\begin{aligned}
		& \varepsilon d_t c_{h}^{n+1} =  D_x (d_x c_h^{n+1}) + D_y (d_y c_h^{n+1})- \alpha c_{h}^{n+1} + \rho_{h}^{n}, \\
		& d_t \rho_{h}^{n+1} =  D_x \Big(\mM_x^{n+1} d_x \big( \f{\rho_h^{n+1}}{\mM_h^{n+1}}   \big) \Big)    +   D_y \Big(\mM_y^{n+1} d_y \big( \f{\rho_h^{n+1}}{\mM_h^{n+1}} \big) \Big),
	\end{aligned}
    \right.
\end{equation}
on $\Pi_x^*\times \Pi_y^*$, enclosed with the following boundary and initial conditions 
\begin{equation}\label{BE-BCFD:IBc}
  \left\{
	\begin{aligned}
		& \left[d_x c_h^{n+1}\right]_{i+1/2, j}
			=\Big[d_x \big(\f{\rho_h^{n+1}}{\mM_h^{n+1}}\big)\Big]_{i+1/2, j} =0, \quad i=\{0, N_x\}, ~ 1 \leq j \leq N_y, \\
		& \left[d_y c_h^{n+1}\right]_{i, j+1/2}
		    =\Big[d_y \big(\f{\rho_h^{n+1}}{\mM_h^{n+1}}\big)\Big]_{i, j+1/2} =0, \quad 1 \leq i \leq N_x, ~ j=\{0, N_y\},\\	
		& c_{h,i,j}^0 = c^o(x_i,y_j),\quad\rho_{h,i,j}^0 = \rho^o(x_i,y_j), \quad 1 \leq i \leq N_x, 1 \leq j \leq N_y,
	\end{aligned}
  \right.
\end{equation}
where $ \mM_{x}^{n+1}$, $\mM_{y}^{n+1}$ and $\mM_{h}^{n+1}$ are defined on staggered spatial grids
$\Pi_x\times \Pi_y^*$, $\Pi_x^*\times \Pi_y$ and $\Pi_x^*\times \Pi_y^*$, respectively, i.e.,
\begin{equation}\label{BE-BCFD:e3}
  \mM_{x,i+1/2,j}^{n+1} := {\rm e}^{[\Pi_h c_h]_{i+1/2,j}^{n+1}},~~
  \mM_{y,i,j+1/2}^{n+1} := {\rm e}^{[\Pi_h c_h]_{i,j+1/2}^{n+1}},~~
  \mM_{h,i,j}^{n+1} := {\rm e}^{c_{h,i,j}^{n+1}}.
\end{equation}

Next, we introduce two auxiliary variables $g_h^{n+1}:=\f{\rho_h^{n+1}}{\mM_h^{n+1}}$ and $\hat{g}_h^{n}:=\f{\rho_h^{n}}{\mM_h^{n+1}}$. Then, the BE-BCFD scheme \eqref{BE-BCFD}--\eqref{BE-BCFD:IBc} can be rewritten into an equivalent form:
\begin{subequations}\label{BE-BCFD:rewrite}
	\begin{numcases}{}
		\varepsilon	d_t c_{h}^{n+1} = D_x (d_x c_h^{n+1})
                 + D_y (d_y c_h^{n+1})-  \alpha c_{h}^{n+1} + \rho_{h}^{n}, \label{BE-BCFD:c:rewrite}\\
	\f{ \mM_h^{n+1}g_h^{n+1}  - \mM_h^{n+1}\hat{g}_h^{n} }{\tau_{n+1}}= D_x (\mM_x^{n+1} (d_x g_h^{n+1}) )  + D_y (\mM_y^{n+1} (d_y g_h^{n+1} )),  \label{BE-BCFD:rho:rewrite}	
	\end{numcases}
\end{subequations}
enclosed with boundary and initial conditions
\begin{equation}\label{BE-BCFD:IBc:rewrite}
\left\{
  \begin{aligned}
	& \left[d_x c_h^n\right]_{i+1/2, j}
		=[d_x g_h^{n}]_{ i+ 1/2, j}=0, \quad i=\{0, N_x\}, ~ 1 \leq j \leq N_y, \\
       & \left[d_y c_h^n\right]_{i, j+1/2}
		=[d_y g_h^{n}]_{i, j+1/2} =0, \quad 1 \leq i \leq N_x, ~ j=\{0, N_y\}, \\	
	 & c_{h,i,j}^0 = c^o(x_i,y_j),  \quad\rho_{h,i,j}^0 = \rho^o(x_i,y_j), \quad 1 \leq i \leq N_x, 1 \leq j \leq N_y. 
	\end{aligned}
  \right.
\end{equation}

\begin{remark} Note that the BE-BCFD scheme \eqref{BE-BCFD}--\eqref{BE-BCFD:IBc} is fully decoupled and linearly implicit. In practical computation, at each time level, one first solves \eqref{BE-BCFD:c:rewrite}--which involves a \textit{constant-coefficient} matrix--to obtain $c_h^{n+1}$. Next, the auxiliary variables $\mM_h^{n+1}$, $\mM_x^{n+1}$, and $\mM_y^{n+1}$ are evaluated using \eqref{BE-BCFD:e3}. Finally, \eqref{BE-BCFD:rho:rewrite} is solved for $g_h^{n+1}$, and $\rho_h^{n+1}$ is then recovered via $\rho_h^{n+1}=\mM_h^{n+1}g_h^{n+1}$. In summary, the fully discrete non-uniform BE-BCFD scheme \eqref{BE-BCFD:rewrite}--\eqref{BE-BCFD:IBc:rewrite} for the Keller–Segel system is implemented sequentially as follows.
\end{remark}
\begin{algorithm}
\caption{Implementation of the BE-BCFD scheme \eqref{BE-BCFD:rewrite}--\eqref{BE-BCFD:IBc:rewrite}}\label{algthm:BE-BCFD}
\KwData{Given initial values $\rho_h^0$ and $c_h^0$}
\For{$n\leftarrow 0$ \KwTo $N_t-1$}{
$c_h^{n+1} \leftarrow $ solve \eqref{BE-BCFD:c:rewrite} using the last time value $\rho_h^n$\;
$\mM_x^{n+1},~ \mM_y^{n+1},~ \mM_h^{n+1} \leftarrow $ compute \eqref{BE-BCFD:e3} with the obtained value $c_h^{n+1}$\;
$g_h^{n+1}\leftarrow$ solve \eqref{BE-BCFD:rho:rewrite} using the computed value $\hat{g}_h^n:=\frac{\rho_h^n}{\mM_h^{n+1}}$\;
$\rho_h^{n+1} \leftarrow $ update $\rho_h^{n+1}:=\mM_h^{n+1} g_h^{n+1}$\;}
\KwResult{Numerical solutions $\{\rho_h^n\}$ and $\{c_h^n$\}}
\end{algorithm}

\subsection{The temporal second-order scheme}
To achieve second-order accuracy in time, we propose a prediction-correction block-centered finite difference (PC-BCFD) scheme for the Keller--Segel system. To be specific, the scheme is divided into the following two steps:

\textbf{Step 1.} On $\Pi_x^*\times \Pi_y^*$, we first solve $(c_{h}^{n+1/2}, \rho_{h}^{n+1/2})$ such that
\begin{equation}\label{PC-BCFD:e1a}
\left\{
	\begin{aligned}
         & \varepsilon \f{ c_{h}^{n+1/2}- c_{h}^{n}}{\tau_{n+1}/2} = D_x (d_x c_h^{n+1/2}) 
                   + D_y (d_y c_h^{n+1/2})	-  \alpha c_{h}^{n+1/2} + \rho_{h}^{n},  \\
	& \f{ \rho_{h}^{n+1/2}- \rho_{h}^{n} }{\tau_{n+1}/2} = D_x \Big(\mM_x^{n+1/2} d_x  \big(\f{\rho_h^{n+1/2}}{\mM_h^{n+1/2}} \big)\Big) + D_y \Big(\mM_y^{n+1/2}  d_y  \big(\f{\rho_h^{n+1/2}}{\mM_h^{n+1/2}} \big)\Big), 
	\end{aligned}
  \right.
\end{equation}
enclosed with homogeneous boundary conditions 
\begin{equation}\label{PC-BCFD:IBc}
	\left\{
	\begin{aligned}
		&\left[d_x c_h^{n+1/2}\right]_{i+1/2, j}
		=\Big[d_x \Big(\f{\rho_h^{n+1/2}}{\mM_h^{n+1/2}}\Big)\Big]_{i+1/2, j} =0, \quad i=\{0, N_x\}, ~ 1 \leq j \leq N_y, \\
		& \Big[d_y c_h^{n+1/2}\Big]_{i, j+1/2}
		=\Big[d_y \Big(\f{\rho_h^{n+1/2}}{\mM_h^{n+1/2}}\Big)\Big]_{i, j+1/2} =0, \quad 1 \leq i \leq N_x, ~ j=\{0, N_y\},
	\end{aligned}
	\right.
\end{equation}
where $ \mM_{x}^{n+1/2}$, $\mM_{y}^{n+1/2}$ and $\mM_h^{n+1/2}$ are defined similarly as in \eqref{BE-BCFD:e3}.

\textbf{Step 2.} With the predicted value $\rho_h^{n+1/2}$, we then solve $(c_{h}^{n+1}, \rho_{h}^{n+1})$ such that
\begin{equation}\label{PC-BCFD:e1b}
    \left\{
	\begin{aligned}
		& \varepsilon d_t c_{h}^{n+1} = D_x (d_x \overline{c}_h^{n+1/2}) + D_y (d_y \overline{c}_h^{n+1/2})	-  \alpha \overline{c}_{h}^{n+1/2} + \rho_{h}^{n+1/2}, \\
	    & d_t \rho_{h}^{n+1} = D_x \Big(\mM_x^{n+1/2} d_x \Big(\overline{\f{\rho_h}{\mM_h}}^{n+1/2}\Big)\Big)
               + D_y \Big(\mM_y^{n+1/2}  d_y \Big(\overline{\f{\rho_h}{\mM_h}}^{n+1/2}\Big) \Big),  
	\end{aligned}
  \right.
\end{equation}
enclosed with the same boundary and initial conditions \eqref{BE-BCFD:IBc}.

By introducing another auxiliary variables $g_h^{n+1/2}:=\f{\rho_h^{n+1/2}}{\mM_h^{n+1/2}}$ and  $\check{g}_h^{n}:=\f{\rho_h^{n}}{\mM_h^{n+1/2}}$, the second-order PC-BCFD scheme \eqref{PC-BCFD:e1a}--\eqref{PC-BCFD:e1b} reduces to
\begin{subequations}\label{PC-BCFD:rewrite}
	\begin{numcases}{}
		\varepsilon \f{ c_{h}^{n+1/2}- c_{h}^{n}}{\tau_{n+1}/2} 
                = D_x (d_x c_h^{n+1/2}) + D_y (d_y c_h^{n+1/2})	-  \alpha c_{h}^{n+1/2} + \rho_{h}^{n}, \label{PC-BCFD:c_half:rewrite} \\
		\f{ \mM_h^{n+1/2}g_h^{n+1/2} - \mM_h^{n+1/2}\check{g}_h^{n}}{\tau_{n+1}/2} = D_x (\mM_x^{n+1/2} (d_x g_h^{n+1/2})) 
        + D_y (\mM_y^{n+1/2} (d_y g_h^{n+1/2})),  \label{PC-BCFD:rho_half:rewrite}\\
		\varepsilon d_t c_{h}^{n+1} = D_x (d_x \overline{c}_h^{n+1/2}) + D_y (d_y \overline{c}_h^{n+1/2})
		-  \alpha \overline{c}_{h}^{n+1/2} + \rho_{h}^{n+1/2}, \label{PC-BCFD:c:rewrite} \\
		\f{\mM_h^{n+1}g_h^{n+1} - \mM_h^{n+1}\hat{g}_h^{n}}{\tau_{n+1}} = D_x (\mM_x^{n+1/2} (d_x \overline{g}_h^{n+1/2}))
                +  D_y (\mM_y^{n+1/2} (d_y \overline{g}_h^{n+1/2}) ),  \label{PC-BCFD:rho:rewrite}
	\end{numcases}
\end{subequations}
enclosed with boundary and initial conditions
\begin{equation}\label{PC-BCFD:IBc:rewrite}
	\left\{
	\begin{aligned}
		& \left[d_x c_h^n\right]_{i+1/2, j}
		=[d_x g_h^{n}]_{ i+ 1/2, j}=0, \quad i=\{0, N_x\}, ~ 1 \leq j \leq N_y, \\
		& \left[d_x c_h^{n+1/2}\right]_{i+1/2, j}
		=[d_x g_h^{n+1/2}]_{ i+ 1/2, j}=0, \quad i=\{0, N_x\}, ~ 1 \leq j \leq N_y, \\
		& \left[d_y c_h^n\right]_{i, j+1/2}
		=[d_y g_h^{n}]_{i, j+1/2} =0, \quad 1 \leq i \leq N_x, ~ j=\{0, N_y\},\\
		& \left[d_y c_h^{n+1/2}\right]_{i, j+1/2}
		=[d_y g_h^{n+1/2}]_{i, j+1/2} =0, \quad 1 \leq i \leq N_x, ~ j=\{0, N_y\},\\	
		&c_{h,i,j}^0 = c^o(x_i,y_j),  \quad \rho_{h,i,j}^0 = \rho^o(x_i,y_j),	 \quad 1 \leq i \leq N_x, 1 \leq j \leq N_y.
	\end{aligned}
	\right.
\end{equation}

 \begin{remark} 
	Although the spatial local truncation errors of the BE-BCFD scheme \eqref{BE-BCFD:rewrite}--\eqref{BE-BCFD:IBc:rewrite} and PC-BCFD scheme \eqref{PC-BCFD:rewrite}--\eqref{PC-BCFD:IBc:rewrite} are only first-order under non-uniform spatial grids, the error estimates under discrete $L^2$-norm are proved to be second-order accurate in space; see, for example, \cite{RP'12,XXF'22}. To summarize, the fully discrete non-uniform finite difference scheme \eqref{PC-BCFD:rewrite}--\eqref{PC-BCFD:IBc:rewrite} for the Keller--Segel system  is implemented as follows:
\end{remark}
\begin{algorithm}
\caption{Implementation of the PC-BCFD scheme \eqref{PC-BCFD:rewrite}--\eqref{PC-BCFD:IBc:rewrite}}\label{algthm:PC-BCFD}
\KwData{Given initial point values $\rho_h^0$ and $c_h^0$}
\For{$n\leftarrow 0$ \KwTo $N_t-1$}{
$c_h^{n+1/2} \leftarrow $ solve \eqref{PC-BCFD:c_half:rewrite} using the last time value $\rho_h^n$\;
$\mM_x^{n+1/2},~ \mM_y^{n+1/2} \leftarrow$ compute \eqref{BE-BCFD:e3} with the obtained value $ c_h^{n+1/2}$\;
$g_h^{n+1/2}\leftarrow$ solve \eqref{PC-BCFD:rho_half:rewrite} using the computed value $\check{g}_h^n:=\frac{\rho_h^n}{\mM_h^{n+1/2}}$\;
$\rho_h^{n+1/2} \leftarrow $ compute $\rho_h^{n+1/2}=\mM_h^{n+1/2} g_h^{n+1/2}$\;
$c_h^{n+1} \leftarrow $ solve \eqref{PC-BCFD:c:rewrite} with the obtained value $\rho_h^{n+1/2}$\;
$\mM_x^{n+1},~ \mM_y^{n+1} \leftarrow $ compute \eqref{BE-BCFD:e3} with the obtained value $ c_h^{n+1}$\;
$g_h^{n+1}\leftarrow$ solve \eqref{PC-BCFD:rho:rewrite} using the computed value $\hat{g}_h^n:=\frac{\rho_h^n}{\mM_h^{n+1}}$\;
$\rho_h^{n+1} \leftarrow $ update $\rho_h^{n+1}=\mM_h^{n+1} g_h^{n+1}$\;}
\KwResult{Numerical solutions $\{\rho_h^n\}$ and $\{c_h^n\}$}
\end{algorithm}
\section{Structure-preserving numerical analysis} \label{sec:sp}
In this section,  both the BE-BCFD scheme \eqref{BE-BCFD:rewrite}--\eqref{BE-BCFD:IBc:rewrite} and PC-BCFD scheme \eqref{PC-BCFD:rewrite}--\eqref{PC-BCFD:IBc:rewrite} are analyzed with respect to the discrete mass conservation, positivity preservation and energy dissipation properties.
\subsection{Analysis of the BE-BCFD scheme}
We first show that a discrete version of mass conservation law \eqref{model:ks:MC} holds for the BE-BCFD scheme with respect to the $(\cdot,\cdot)_{\rm M}$ inner product.
\begin{theorem}[Discrete mass conservation]\label{thm:be-mass-cov}
Let $\left\{\rho_h^{n+1}, c_h^{n+1}\right\}$ be solutions to the BE-BCFD scheme \eqref{BE-BCFD:rewrite}--\eqref{BE-BCFD:IBc:rewrite}. Then, there holds
	\begin{equation*}
		\left(\rho_h^{n+1}, 1\right)_{\mathrm{M}}=\left(\rho_h^n, 1\right)_{\mathrm{M}}.
	\end{equation*}
\end{theorem}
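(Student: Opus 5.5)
The plan is to take the discrete $(\cdot,\cdot)_{\rm M}$ inner product of the density equation \eqref{BE-BCFD:rho:rewrite}, in its original form from \eqref{BE-BCFD}, with the constant grid function $1$. Then we use the summation-by-parts identity of Lemma~\ref{lemma:Dd} to show that the right-hand side vanishes. Since $\mM_h^{n+1}g_h^{n+1}=\rho_h^{n+1}$ and $\mM_h^{n+1}\hat g_h^n=\rho_h^n$, the left-hand side is simply $d_t\rho_h^{n+1}$. Hence
\begin{equation*}
\frac{1}{\tau_{n+1}}\Big[(\rho_h^{n+1},1)_{\rm M}-(\rho_h^n,1)_{\rm M}\Big]
=\big(D_x(\mM_x^{n+1} d_x g_h^{n+1}),1\big)_{\rm M}+\big(D_y(\mM_y^{n+1} d_y g_h^{n+1}),1\big)_{\rm M}.
\end{equation*}

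To handle the right-hand side, set $v:=\mM_x^{n+1} d_x g_h^{n+1}$ on $\Pi_x\times\Pi_y^*$ and $w:=\mM_y^{n+1} d_y g_h^{n+1}$ on $\Pi_x^*\times\Pi_y$. The boundary conditions \eqref{BE-BCFD:IBc:rewrite} give $[d_x g_h^{n+1}]_{1/2,j}=[d_x g_h^{n+1}]_{N_x+1/2,j}=0$, and similarly in $y$. Since $\mM_x^{n+1},\mM_y^{n+1}$ are finite exponentials, $v$ and $w$ vanish on the boundary edges, so the hypotheses of Lemma~\ref{lemma:Dd} hold. Applying the lemma with $q\equiv 1$ yields $(1,D_x v)_{\rm M}=-(d_x 1,v)_x=0$ and $(1,D_y w)_{\rm M}=-(d_y 1,w)_y=0$, because the discrete difference of a constant is zero.

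Alternatively, one can argue by direct telescoping: $\sum_i \Delta x_i [D_x v]_{i,j}=v_{N_x+1/2,j}-v_{1/2,j}=0$ for each $j$, and similarly in $y$. Multiplying the identity by $\tau_{n+1}$ then gives the claim.

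The only point requiring care is interpreting the boundary values. The boundary fluxes $v_{1/2,j}$ and $v_{N_x+1/2,j}$ are defined through the boundary condition rather than through $\Pi_h$, which is only defined on interior cells. I would therefore state explicitly that the flux is set to zero there by \eqref{BE-BCFD:IBc:rewrite}, regardless of the value assigned to $\mM_x^{n+1}$ at the boundary. Beyond that, the argument is routine. Note also that mass conservation does not require positivity or any restriction on the step size.
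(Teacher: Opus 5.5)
Your proposal is correct and takes the same route as the paper. You take the $(\cdot,\cdot)_{\rm M}$ inner product of \eqref{BE-BCFD:rho:rewrite} with $1$, use Lemma~\ref{lemma:Dd} with the zero-flux boundary conditions to kill the right-hand side, and recover $\rho_h^{n+1}=\mM_h^{n+1}g_h^{n+1}$, $\rho_h^n=\mM_h^{n+1}\hat g_h^n$. Your remark that the boundary fluxes vanish by the boundary condition regardless of how $\mM_x^{n+1}$, $\mM_y^{n+1}$ are defined on the outer edges (where $\Pi_h$ is not defined) is a useful clarification that the paper leaves implicit.
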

\begin{proof}By taking the inner product of \eqref{BE-BCFD:rho:rewrite}  with 1 in the sense of $(\cdot,\cdot)_{\rm M}$, we have
\begin{equation*}
	\begin{aligned}
			\f{\left( \mM_h^{n+1}g_h^{n+1}  , 1\right)_{\mathrm{M}}
            -\left( \mM_h^{n+1}\hat{g}_h^{n} , 1\right)_{\mathrm{M}} }{\tau_{n+1}}
            &=  \left( D_x (\mM_x^{n+1}[d_x g_h^{n+1} ]) + D_y (\mM_x^{n+1}[d_y g_h^{n+1} ]), 1\right)_{\mathrm{M}} \\
		& =-\left(\mM_x^{n+1}[d_x g_h^{n+1} ], d_x 1\right)_x-\left(\mM_y^{n+1}[d_y g_h^{n+1} ], d_y 1\right)_y\\
            &= 0,
	\end{aligned}
\end{equation*}
    where Lemma \ref{lemma:Dd} and the discrete boundary conditions \eqref{BE-BCFD:IBc:rewrite} are applied.
	Thus, we obtain
	\begin{equation*}
		\left(\mM_h^{n+1}g_h^{n+1}, 1\right)_{\mathrm{M}} = \left(\mM_h^{n+1}\hat{g}_h^{n}, 1\right)_{\mathrm{M}},
	\end{equation*}
	which implies the conclusion.
\end{proof}

\begin{theorem}[Positivity-preserving]\label{thm:posi-preserve}	
Let $\left\{\rho_h^{n+1}, c_h^{n+1}\right\}$ be the solutions to the BE-BCFD scheme \eqref{BE-BCFD:rewrite}--\eqref{BE-BCFD:IBc:rewrite}. If $\rho^o > 0$,  then we have $\rho_h^{n+1} >0$; moreover, if $c^o > 0$ and $\rho^o > 0$,  then we further have $c_h^{n+1} >0$.
\end{theorem}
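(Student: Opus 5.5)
We argue by induction on $n$, using the M-matrix tools of Lemma \ref{lem:M-matrix} and Lemma \ref{lem:M-matrix:inverse} applied to the two linear systems solved in Algorithm \ref{algthm:BE-BCFD}. The induction hypothesis is $\rho_h^n>0$ (and $c_h^n>0$ for the second claim). The base case $n=0$ follows from $\rho_{h,i,j}^0=\rho^o(x_i,y_j)>0$ and $c_{h,i,j}^0=c^o(x_i,y_j)>0$.

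\emph{Positivity of $\rho_h^{n+1}$.} We first fix $c_h^{n+1}$ from \eqref{BE-BCFD:c:rewrite}; it exists since its coefficient matrix is a non-singular M-matrix (see below). Then $\mM_h^{n+1},\mM_x^{n+1},\mM_y^{n+1}$ are exponentials and hence strictly positive, regardless of the sign of $c_h^{n+1}$. Multiplying \eqref{BE-BCFD:rho:rewrite} by $\tau_{n+1}$ and writing it entrywise with the boundary conditions \eqref{BE-BCFD:IBc:rewrite} (boundary fluxes vanish) gives
\begin{equation*}
\Big(\mM_{h,i,j}^{n+1}+\tfrac{\tau_{n+1}}{\Delta x_i}\big(\tfrac{\mM_{x,i+1/2,j}^{n+1}}{\Delta x_{i+1/2}}+\tfrac{\mM_{x,i-1/2,j}^{n+1}}{\Delta x_{i-1/2}}\big)+\cdots\Big)g_{h,i,j}^{n+1}-\sum(\text{neighbour terms})=\rho_{h,i,j}^n,
\end{equation*}
where the terms involving a boundary face are simply absent. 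The diagonal entries are positive and the off-diagonal entries are $-\tau_{n+1}\mM_{x,i\pm1/2,j}^{n+1}/(\Delta x_i\Delta x_{i\pm1/2})\le0$, with the analogue in $y$. Each row sum equals $\mM_{h,i,j}^{n+1}>0$. By Lemma \ref{lem:M-matrix}, the matrix is a non-singular M-matrix. By Lemma \ref{lem:M-matrix:inverse}, its inverse is nonnegative, and since it is nonsingular every row of the inverse has a nonzero entry. Hence $g_h^{n+1}=A^{-1}\rho_h^n>0$ whenever $\rho_h^n>0$, and therefore $\rho_h^{n+1}=\mM_h^{n+1}g_h^{n+1}>0$. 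Note that this step uses the right-hand side $\mM_h^{n+1}\hat g_h^n=\rho_h^n$, so we never need positivity of $c$.

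\emph{Positivity of $c_h^{n+1}$.} Rewrite \eqref{BE-BCFD:c:rewrite} as $(\varepsilon+\alpha\tau_{n+1})c_h^{n+1}-\tau_{n+1}(D_xd_x+D_yd_y)c_h^{n+1}=\varepsilon c_h^n+\tau_{n+1}\rho_h^n$. The same structure holds: the off-diagonal entries are nonpositive and each row sum is $\varepsilon+\alpha\tau_{n+1}$.

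The main obstacle is the degenerate case $\varepsilon=\alpha=0$, or more generally $\varepsilon+\alpha\tau_{n+1}=0$. Then all row sums vanish and Lemma \ref{lem:M-matrix} does not apply, since the Neumann Laplacian is singular. We therefore restrict to $\varepsilon>0$, or $\alpha>0$, as implicit in the solvability of the scheme. In that case every row sum is positive, so the matrix is a non-singular M-matrix. The right-hand side $\varepsilon c_h^n+\tau_{n+1}\rho_h^n$ is strictly positive by the induction hypothesis and the previous step. The same inverse-positivity argument then yields $c_h^{n+1}>0$, which closes the induction.
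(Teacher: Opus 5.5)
Your proposal is correct and follows the paper's proof: write both linear systems in matrix form, check positive diagonal, nonpositive off-diagonal entries and strictly positive row sums ($\mM_{h,i,j}^{n+1}$ for the density system, $\varepsilon+\alpha\tau_{n+1}$ for the concentration system), apply Lemmas \ref{lem:M-matrix}--\ref{lem:M-matrix:inverse}, and induct on $n$. You also add two justified refinements that the paper passes over: strict positivity follows because a nonsingular nonnegative inverse has no zero row, and the scheme needs $\varepsilon+\alpha\tau_{n+1}>0$ (the paper assumes this tacitly, e.g.\ in its $\varepsilon=0$ remark).
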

\begin{proof} First, suppose the initial value $\rho^o > 0$ at pointwise level, we discuss the positivity of $\rho_h^{n+1}$ for $n \ge 0$. 
Let $\text{vec}(\cdot): \mathbb{R}^{N_x \times N_y} \to \mathbb{R}^N$  ($N:=N_xN_y$) denote the column-wise vectorization operator. Then, the scheme \eqref{BE-BCFD:rho:rewrite} can be written in a compact matrix form:
\begin{equation}\label{MatrixForm:rho}
	A_\rho\, \text{vec}(g_h^{n+1}) = \text{vec}(\mM_h^{n+1}\circ \hat{g}_h^{n})= \text{vec}(\rho_h^{n}),
\end{equation}
where $\circ$ represents the Hardmard product, and the entries of the $N$-by-$N$ matrix $A_\rho$ is represented as below. 

\paragraph{\indent Case i} If $(x_i,y_j)$ is an interior node of $\Pi_x^*\times \Pi_y^*$, which means $2\leq i \leq N_x-1$, $2\leq j \leq N_y-1$,  then equation \eqref{BE-BCFD:rho:rewrite} reduces to
  \begin{equation}\label{scheme:explicit:interior}
	 \begin{aligned}
	 	&\mM_{h,i,j}^{n+1}g_{h,i,j}^{n+1} \\
	 		&\quad- \tau_{n+1}  \bigg[ \f{\mM_{x,i-1/2,j}^{n+1}}{\Delta x_i \Delta x_{i-1/2}} g_{h,i-1,j}^{n+1}  
                  - \Big( \f{\mM_{x,i-1/2,j}^{n+1}}{\Delta x_i \Delta x_{i-1/2}} + \f{\mM_{x,i+1/2,j}^{n+1}}{\Delta x_i \Delta x_{i+1/2}} \Big) g_{h,i,j}^{n+1} 
                      + \f{\mM_{x,i+1/2,j}^{n+1}}{\Delta x_i \Delta x_{i+1/2}} g_{h,i+1,j}^{n+1} \bigg]\\
	 		&\quad- \tau_{n+1} \bigg[ \f{\mM_{y,i,j-1/2}^{n+1}}{\Delta y_j \Delta y_{j-1/2}} g_{h,i,j-1}^{n+1}  
                  - \Big( \f{\mM_{y,i,j-1/2}^{n+1}}{\Delta y_j \Delta y_{j-1/2}} + \f{\mM_{y,i,j+1/2}^{n+1}}{\Delta y_j \Delta y_{j+1/2}} \Big) g_{h,i,j}^{n+1} + \f{\mM_{y,i,j+1/2}^{n+1}}{\Delta y_j \Delta y_{j+1/2}} g_{h,i,j+1}^{n+1} \bigg]\\
	 	&= \mM_{h,i,j}^{n+1}\hat{g}_{h,i,j}^{n}.
	 	\end{aligned}
	 \end{equation}
In this case, the coefficients of $\{g_{h,i,j}^{n+1}\}$ in \eqref{scheme:explicit:interior} correspond to the $(\kappa:= i + N_x (j-1) )$th row of the matrix $A_\rho$, and
\begin{equation*}
A_\rho(\kappa,\ell)=\left\{
	\begin{aligned}
	&- \tau_{n+1} \f{ \mM_{y,i, j-1/2}^{n+1} }{\Delta y_j \Delta y_{j-1/2}}, \quad \ell=\kappa-N_x,\\
        &- \tau_{n+1} \f{\mM_{x, i-1/2,j}^{n+1}}{\Delta x_i \Delta x_{i-1/2}},\quad \ell=\kappa-1,\\
	& \mM_{h,i,j}^{n+1} + \tau_{n+1} \Big( \f{\mM_{x,i-1/2,j}^{n+1}}{\Delta x_i \Delta x_{i-1/2}} + \f{\mM_{x,i+1/2,j}^{n+1}}{\Delta x_i \Delta x_{i+1/2}} + \f{\mM_{y,i,j-1/2}^{n+1}}{\Delta y_j \Delta y_{j-1/2}} + \f{\mM_{y,i,j+1/2}^{n+1}}{\Delta y_j \Delta y_{j+1/2}}\Big), \quad \ell=\kappa,\\
	&- \tau_{n+1} \f{\mM_{x,i+1/2,j}^{n+1}}{\Delta x_i \Delta x_{i+1/2}}, \quad \ell=\kappa+1,\\
	& - \tau_{n+1} \f{\mM_{y,i,j+1/2}^{n+1}}{\Delta y_j \Delta y_{j+1/2}},\quad \ell=\kappa+N_x.
	\end{aligned}
    \right.
\end{equation*}


\paragraph{\indent Case ii}   If $(x_i,y_j)$ is an edge node of $\Pi_x^*\times \Pi_y^*$ such that the edge parallel to $y$-axis, which means $ i = 1~\text{or} ~N_x$, $2\leq j \leq N_y-1$, then equation \eqref{BE-BCFD:rho:rewrite} with boundary condition \eqref{BE-BCFD:IBc:rewrite} reduces to
    \begin{equation}\label{scheme:explicit:edge-x1}
    	\begin{aligned}
    	&\mM_{h,1,j}^{n+1}g_{h,1,j}^{n+1} - \tau_{n+1} \bigg[ - \f{\mM_{x,3/2,j}^{n+1}}{\Delta x_1 \Delta x_{3/2}} g_{h,1,j}^{n+1} + \f{\mM_{x,3/2,j}^{n+1}}{\Delta x_1 \Delta x_{3/2}} g_{h,2,j}^{n+1} \bigg]\\
    	&\quad- \tau_{n+1} \bigg[ \f{\mM_{y,1,j-1/2}^{n+1}}{\Delta y_j \Delta y_{j-1/2}} g_{h,1,j-1}^{n+1}  
    	- \Big( \f{\mM_{y,1,j-1/2}^{n+1}}{\Delta y_j \Delta y_{j-1/2}} + \f{\mM_{y,1,j+1/2}^{n+1}}{\Delta y_j \Delta y_{j+1/2}} \Big) g_{h,1,j}^{n+1} 
    	+ \f{\mM_{y,1,j+1/2}^{n+1}}{\Delta y_j \Delta y_{j+1/2}} g_{h,1,j+1}^{n+1} \bigg]\\
    &= \mM_{h,i,j}^{n+1}\hat{g}_{h,1,j}^{n}, \quad \text{for}~ i = 1,
    	\end{aligned}
    \end{equation}
and 
\begin{equation}\label{scheme:explicit:edge-xNx}
	\begin{aligned}
		&\mM_{h,N_x,j}^{n+1}g_{h,N_x,j}^{n+1} - \tau_{n+1} \bigg[ \f{\mM_{x,N_x-1/2,j}^{n+1}}{\Delta x_{N_x} \Delta x_{N_x-1/2} } g_{h,N_x-1,j}^{n+1}  -  \f{\mM_{x,N_x-1/2,j}^{n+1}}{\Delta x_{N_x} \Delta x_{N_x-1/2}}   g_{h,N_x,j}^{n+1} \bigg]\\
		&\quad- \tau_{n+1} \bigg[ \f{\mM_{y,N_x,j-1/2}^{n+1}}{\Delta y_j \Delta y_{j-1/2}} g_{h,N_x,j-1}^{n+1}  
		- \Big( \f{\mM_{y,N_x,j-1/2}^{n+1}}{\Delta y_j \Delta y_{j-1/2}} + \f{\mM_{y,N_x,j+1/2}^{n+1}}{\Delta y_j \Delta y_{j+1/2}} \Big) g_{h,N_x,j}^{n+1} + \f{\mM_{y,N_x,j+1/2}^{n+1}}{\Delta y_j \Delta y_{j+1/2}} g_{h,N_x,j+1}^{n+1} \bigg]\\
		&= \mM_{h,N_x,j}^{n+1}\hat{g}_{h,N_x,j}^{n}, \quad \text{for}~ i = N_x.
	\end{aligned}
\end{equation}
In this case, the coefficients in \eqref{scheme:explicit:edge-x1} correspond to the $(\kappa = 1 + N_x (j-1) )$th row of the matrix $A_\rho$, and
\begin{equation*}
A_\rho(\kappa,\ell)=\left\{
	\begin{aligned}
	&- \tau_{n+1} \f{ \mM_{y,1, j-1/2}^{n+1} }{\Delta y_j \Delta y_{j-1/2}}, \quad \ell=\kappa-N_x,\\
	& \mM_{h,1,j}^{n+1} + \tau_{n+1} \Big(  \f{\mM_{x,3/2,j}^{n+1}}{\Delta x_1 \Delta x_{3/2}} + \f{\mM_{y,1,j-1/2}^{n+1}}{\Delta y_j \Delta y_{j-1/2}} + \f{\mM_{y,1,j+1/2}^{n+1}}{\Delta y_j \Delta y_{j+1/2}}\Big), \quad \ell=\kappa,\\
	&- \tau_{n+1} \f{\mM_{x,3/2,j}^{n+1}}{\Delta x_1 \Delta x_{3/2}}, \quad \ell=\kappa+1,\\
	& - \tau_{n+1} \f{\mM_{y,1,j+1/2}^{n+1}}{\Delta y_j \Delta y_{j+1/2}},\quad \ell=\kappa+N_x,
	\end{aligned}
    \right.
\end{equation*}
and the coefficients in \eqref{scheme:explicit:edge-xNx} correspond to the $(\kappa = N_x j )$th row of the matrix $A_\rho$, and
\begin{equation*}
A_\rho(\kappa,\ell)=\left\{
	\begin{aligned}
	&- \tau_{n+1} \f{ \mM_{y,N_x, j-1/2}^{n+1} }{\Delta y_j \Delta y_{j-1/2}}, \quad \ell=\kappa-N_x,\\
        &- \tau_{n+1} \f{\mM_{x, N_x-1/2,j}^{n+1}}{\Delta x_{N_x} \Delta x_{N_x-1/2}},\quad \ell=\kappa-1,\\
	& \mM_{h,N_x,j}^{n+1} + \tau_{n+1} \Big( \f{\mM_{x,N_x-1/2,j}^{n+1}}{\Delta x_{N_x} \Delta x_{N_x-1/2}} + \f{\mM_{y,N_x,j-1/2}^{n+1}}{\Delta y_j \Delta y_{j-1/2}} + \f{\mM_{y,N_x,j+1/2}^{n+1}}{\Delta y_j \Delta y_{j+1/2}}\Big), \quad \ell=\kappa,\\
	& - \tau_{n+1} \f{\mM_{y,N_x,j+1/2}^{n+1}}{\Delta y_j \Delta y_{j+1/2}},\quad \ell=\kappa+N_x.
	\end{aligned}
    \right.
\end{equation*}

\paragraph{\indent Case iii} If $(x_i,y_j)$ is an edge node of $\Pi_x^*\times \Pi_y^*$ such that the edge parallel to $x$-axis, which means $2\leq i \leq N_x-1$, $ j = 1~\text{or} ~N_y$, then equation \eqref{BE-BCFD:rho:rewrite} with boundary condition \eqref{BE-BCFD:IBc:rewrite} reduces to
\begin{equation}\label{scheme:explicit:edge-y1}
	\begin{aligned}
		&\mM_{h,i,1}^{n+1}g_{h,i,1}^{n+1} \\
		&\quad- \tau_{n+1} \Big[ \f{\mM_{x,i-1/2,1}^{n+1}}{\Delta x_i \Delta x_{i-1/2}} g_{h,i-1,1}^{n+1}  - \Big( \f{\mM_{x,i-1/2,1}^{n+1}}{\Delta x_i \Delta x_{i-1/2}} + \f{\mM_{x,i+1/2,1}^{n+1}}{\Delta x_i \Delta x_{i+1/2}} \Big) g_{h,i,1}^{n+1} 
		+ \f{\mM_{x,i+1/2,j}^{n+1}}{\Delta x_i \Delta x_{i+1/2}} g_{h,i+1,1}^{n+1} \Big]\\
		&\quad- \tau_{n+1} \Big[ - \f{\mM_{y,i,3/2}^{n+1}}{\Delta y_1 \Delta y_{3/2}}  g_{h,i,1}^{n+1} + \f{\mM_{y,i,3/2}^{n+1}}{\Delta y_1 \Delta y_{3/2}} g_{h,i,2}^{n+1} \Big] = \mM_{h,i,1}^{n+1}\hat{g}_{h,i,1}^{n}, \quad \text{for}~ j = 1,
	\end{aligned}
\end{equation}
and
\begin{equation}\label{scheme:explicit:edge-yNy}
	\begin{aligned}
		&\mM_{h,i,N_y}^{n+1}g_{h,i,N_y}^{n+1} \\
		&\quad- \tau_{n+1} \Big[ \f{\mM_{x,i-1/2,N_y}^{n+1}}{\Delta x_i \Delta x_{i-1/2}} g_{h,i-1,N_y}^{n+1}  - \Big( \f{\mM_{x,i-1/2,N_y}^{n+1}}{\Delta x_i \Delta x_{i-1/2}} + \f{\mM_{x,i+1/2,N_y}^{n+1}}{\Delta x_i \Delta x_{i+1/2}} \Big) g_{h,i,N_y}^{n+1} + \f{\mM_{x,i+1/2,N_y}^{n+1}}{\Delta x_i \Delta x_{i+1/2}} g_{h,i+1,N_y}^{n+1} \Big]\\
		&\quad- \tau_{n+1} \Big[ \f{\mM_{y,i,N_y-1/2}^{n+1}}{\Delta y_{N_y} \Delta y_{N_y-1/2}} g_{h,i,N_y-1}^{n+1}  -  \f{\mM_{y,i,N_y-1/2}^{n+1}}{\Delta y_{N_y} \Delta y_{N_y-1/2}}  g_{h,i,N_y}^{n+1} \Big] = \mM_{h,i,N_y}^{n+1}\hat{g}_{h,i,N_y}^{n}, \quad \text{for}~ j = N_y.
	\end{aligned}
\end{equation}
In this case, the coefficients in \eqref{scheme:explicit:edge-y1} correspond to the $(\kappa = i )$th row of the matrix $A_\rho$, and
\begin{equation*}
A_\rho(\kappa,\ell)=\left\{
	\begin{aligned}
        &- \tau_{n+1} \f{\mM_{x, i-1/2,1}^{n+1}}{\Delta x_i \Delta x_{i-1/2}},\quad \ell=\kappa-1,\\
	& \mM_{h,i,1}^{n+1} + \tau_{n+1} \Big( \f{\mM_{x,i-1/2,1}^{n+1}}{\Delta x_i \Delta x_{i-1/2}} + \f{\mM_{x,i+1/2,1}^{n+1}}{\Delta x_i \Delta x_{i+1/2}} + \f{\mM_{y,i,3/2}^{n+1}}{\Delta y_1 \Delta y_{3/2}}\Big), \quad \ell=\kappa,\\
	&- \tau_{n+1} \f{\mM_{x,i+1/2,1}^{n+1}}{\Delta x_i \Delta x_{i+1/2}}, \quad \ell=\kappa+1,\\
	& - \tau_{n+1} \f{\mM_{y,i,3/2}^{n+1}}{\Delta y_1 \Delta y_{3/2}},\quad \ell=\kappa+N_x.
	\end{aligned}
    \right.
\end{equation*}
and the coefficients in \eqref{scheme:explicit:edge-yNy} correspond to the $(\kappa = i + N_x (N_y-1) )$th row of the matrix $A_\rho$, and
\begin{equation*}
A_\rho(\kappa,\ell)=\left\{
	\begin{aligned}
	&- \tau_{n+1} \f{ \mM_{y,i, N_y-1/2}^{n+1} }{\Delta y_{N_y} \Delta y_{N_y-1/2}}, \quad \ell=\kappa-N_x,\\
        &- \tau_{n+1} \f{\mM_{x, i-1/2,N_y}^{n+1}}{\Delta x_i \Delta x_{i-1/2}},\quad \ell=\kappa-1,\\
	& \mM_{h,i,N_y}^{n+1} + \tau_{n+1} \Big( \f{\mM_{x,i-1/2,N_y}^{n+1}}{\Delta x_i \Delta x_{i-1/2}} + \f{\mM_{x,i+1/2,N_y}^{n+1}}{\Delta x_i \Delta x_{i+1/2}} + \f{\mM_{y,i,N_y-1/2}^{n+1}}{\Delta y_{N_y} \Delta y_{N_y-1/2}} \Big), \quad \ell=\kappa,\\
	&- \tau_{n+1} \f{\mM_{x,i+1/2,N_y}^{n+1}}{\Delta x_i \Delta x_{i+1/2}}, \quad \ell=\kappa+1.
	\end{aligned}
    \right.
\end{equation*}

  \paragraph{\indent Case iv} If $(x_i,y_j)$ is a corner node of $\Pi_x^*\times \Pi_y^*$, which means $ i = 1~\text{or} ~N_x$, $ j = 1~\text{or} ~N_y$, then equation \eqref{BE-BCFD:rho:rewrite} with boundary condition \eqref{BE-BCFD:IBc:rewrite} reduces to
\begin{equation}\label{scheme:explicit:corner11}
	\begin{aligned}
		&\mM_{h,1,1}^{n+1}g_{h,1,1}^{n+1} 
	    - \tau_{n+1} \Big[ - \f{\mM_{x,3/2,1}^{n+1}}{\Delta x_1 \Delta x_{3/2}} g_{h,1,1}^{n+1} + \f{\mM_{x,3/2,j}^{n+1}}{\Delta x_1 \Delta x_{3/2}} g_{h,2,1}^{n+1} \Big]\\
		&\qquad- \tau_{n+1} \Big[ - \f{\mM_{y,1,3/2}^{n+1}}{\Delta y_1 \Delta y_{3/2}}  g_{h,1,1}^{n+1} + \f{\mM_{y,1,3/2}^{n+1}}{\Delta y_1 \Delta y_{3/2}} g_{h,1,2}^{n+1} \Big]
		= \mM_{h,1,1}^{n+1}\hat{g}_{h,1,1}^{n}, 
	\end{aligned}
\end{equation}
for~ $i = j= 1$; and
\begin{equation}\label{scheme:explicit:corner1Ny}
	\begin{aligned}
		&\mM_{h,1,N_y}^{n+1}g_{h,1,N_y}^{n+1} 
		- \tau_{n+1} \Big[ - \f{\mM_{x,3/2,N_y}^{n+1}}{\Delta x_1 \Delta x_{3/2}} g_{h,1,N_y}^{n+1} + \f{\mM_{x,3/2,j}^{n+1}}{\Delta x_1 \Delta x_{3/2}} g_{h,2,N_y}^{n+1} \Big]\\
		&\qquad- \tau_{n+1} \Big[ \f{\mM_{y,1,N_y-1/2}^{n+1}}{\Delta y_{N_y} \Delta y_{N_y-1/2}} g_{h,1,N_y-1}^{n+1}  -  \f{\mM_{y,1,N_y-1/2}^{n+1}}{\Delta y_{N_y} \Delta y_{N_y-1/2}}  g_{h,1,N_y}^{n+1} \Big]
		= \mM_{h,1,N_y}^{n+1}\hat{g}_{h,1,N_y}^{n}, 
	\end{aligned}
\end{equation}
for~ $i = 1,~ j=N_y$; and
\begin{equation}\label{scheme:explicit:cornerNx1}
	\begin{aligned}
		&\mM_{h,N_x,1}^{n+1}g_{h,N_x,1}^{n+1} 
		- \tau_{n+1} \Big[ \f{\mM_{x,N_x-1/2,1}^{n+1}}{\Delta x_{N_x} \Delta x_{N_x-1/2} } g_{h,N_x-1,1}^{n+1}  -  \f{\mM_{x,N_x-1/2,1}^{n+1}}{\Delta x_{N_x} \Delta x_{N_x-1/2}}   g_{h,N_x,1}^{n+1} \Big]\\
		&\qquad- \tau_{n+1} \Big[ - \f{\mM_{y,1,3/2}^{n+1}}{\Delta y_1 \Delta y_{3/2}}  g_{h,N_x,1}^{n+1} + \f{\mM_{y,N_x,3/2}^{n+1}}{\Delta y_1 \Delta y_{3/2}} g_{h,N_x,2}^{n+1} \Big]
		= \mM_{h,N_x,1}^{n+1}\hat{g}_{h,N_x,1}^{n}, 
	\end{aligned}
\end{equation}
for $i = N_x,~ j=1$; and
\begin{equation}\label{scheme:explicit:cornerNxNy}
	\begin{aligned}
		&\mM_{h,N_x,N_y}^{n+1}g_{h,N_x,N_y}^{n+1} 
		- \tau_{n+1} \Big[ \f{\mM_{x,N_x-1/2,N_y}^{n+1}}{\Delta x_{N_x} \Delta x_{N_x-1/2} } g_{h,N_x-1,N_y}^{n+1}  -  \f{\mM_{x,N_x-1/2,N_y}^{n+1}}{\Delta x_{N_x} \Delta x_{N_x-1/2}}   g_{h,N_x,N_y}^{n+1} \Big]\\
		&\qquad- \tau_{n+1} \Big[ \f{\mM_{y,N_x,N_y-1/2}^{n+1}}{\Delta y_{N_y} \Delta y_{N_y-1/2}} g_{h,N_x,N_y-1}^{n+1}  -  \f{\mM_{y,N_x,N_y-1/2}^{n+1}}{\Delta y_{N_y} \Delta y_{N_y-1/2}}  g_{h,N_x,N_y}^{n+1} \Big]
		= \mM_{h,N_x,N_y}^{n+1}\hat{g}_{h,N_x,N_y}^{n}, 
	\end{aligned}
\end{equation}
for $i = N_x,~ j=N_y$.
In this case, the coefficients for the above four equations \eqref{scheme:explicit:corner11}--\eqref{scheme:explicit:cornerNxNy} correspond to the $(\kappa = 1,\ N_x,\ 1+N_x(N_y-1)\ \text{and}\ N_xN_y )$th row of the matrix $A_\rho$, respectively, and
\begin{equation*}
A_\rho(1,\ell)=\left\{
	\begin{aligned}
	& \mM_{h,1,1}^{n+1} + \tau_{n+1} \Big(  \f{\mM_{x,3/2,1}^{n+1}}{\Delta x_1 \Delta x_{3/2}} + \f{\mM_{y,1,3/2}^{n+1}}{\Delta y_1 \Delta y_{3/2}}\Big), \quad \ell=1,\\
	&- \tau_{n+1} \f{\mM_{x,3/2,1}^{n+1}}{\Delta x_1 \Delta x_{3/2}}, \quad \ell=2,\\
	& - \tau_{n+1} \f{\mM_{y,1,3/2}^{n+1}}{\Delta y_1 \Delta y_{3/2}},\quad \ell=1+N_x,
	\end{aligned}
    \right.
\end{equation*}
\begin{equation*}
A_\rho(N_x,\ell)=\left\{
	\begin{aligned}
        &- \tau_{n+1} \f{\mM_{x, N_x-1/2,j}^{n+1}}{\Delta x_{N_x} \Delta x_{N_x-1/2}},\quad \ell=N_x-1,\\
	& \mM_{h,N_x,1}^{n+1} + \tau_{n+1} \Big( \f{\mM_{x,N_x-1/2,1}^{n+1}}{\Delta x_{N_x} \Delta x_{N_x-1/2}}  + \f{\mM_{y,N_x,3/2}^{n+1}}{\Delta y_1 \Delta y_{3/2}}\Big), \quad \ell=N_x,\\
	& - \tau_{n+1} \f{\mM_{y,N_x,3/2}^{n+1}}{\Delta y_1 \Delta y_{3/2}},\quad \ell=2 N_x,
	\end{aligned}
    \right.
\end{equation*}
\begin{equation*}
A_\rho(1+N_x(N_y-1),\ell)=\left\{
	\begin{aligned}
	&- \tau_{n+1} \f{ \mM_{y,1, N_y-1/2}^{n+1} }{\Delta y_{N_y} \Delta y_{N_y-1/2}}, \quad \ell=1+N_x(N_y-2),\\
	& \mM_{h,1,N_y}^{n+1} + \tau_{n+1} \Big( \f{\mM_{x,3/2,N_y}^{n+1}}{\Delta x_1 \Delta x_{3/2}} + \f{\mM_{y,1,N_y-1/2}^{n+1}}{\Delta y_{N_y} \Delta y_{N_y-1/2}} \Big), \quad \ell=1+N_x(N_y-1),\\
	&- \tau_{n+1} \f{\mM_{x,3/2,N_y}^{n+1}}{\Delta x_1 \Delta x_{3/2}}, \quad \ell=2+N_x(N_y-1).
	\end{aligned}
    \right.
\end{equation*}
\begin{equation*}
A_\rho(N_xN_y,\ell)=\left\{
	\begin{aligned}
	&- \tau_{n+1} \f{ \mM_{y,N_x, N_y-1/2}^{n+1} }{\Delta y_{N_y} \Delta y_{N_y-1/2}}, \quad \ell=N_x(N_y-1),\\
        &- \tau_{n+1} \f{\mM_{x, N_x-1/2,N_y}^{n+1}}{\Delta x_{N_x} \Delta x_{N_x-1/2}},\quad \ell=N_xN_y-1,\\
	& \mM_{h,N_x,N_y}^{n+1} + \tau \Big( \f{\mM_{x,N_x-1/2,N_y}^{n+1}}{\Delta x_{N_x} \Delta x_{N_x-1/2}} + \f{\mM_{y,N_x,N_y-1/2}^{n+1}}{\Delta y_{N_y} \Delta y_{N_y-1/2}} \Big), \quad \ell=N_xN_y.
	\end{aligned}
    \right.
\end{equation*}

Let $\bm{1}$ be an $N$-dimensional column vector with all entries being one. Then, from above discussions, it can be easily verified that 
\begin{equation*}
	A_\rho \bm{1} = \text{vec}( \mM_h^{n+1}) >0,
\end{equation*}
as $\mM_{h,i,j}^{n+1} = {\rm e}^{c_{h,i,j}^{n+1}} > 0$ for all $i, j$.  Thus, Lemma \ref{lem:M-matrix} implies that $A_\rho$ is a non-singular M-matrix, which by Lemma \ref{lem:M-matrix:inverse} further implies $A_\rho^{-1} \geq 0$. Therefore, if $\rho_h^0=\rho^o > 0$ on $\Pi_x^*\times \Pi_y^*$, from \eqref{MatrixForm:rho} and by deduction we have 
$\text{vec}( g_h^{n+1}) >0$, and thus $\rho_h^{n+1} = \mM_h^{n+1}g_h^{n+1}>0$.

Next, we conclude from \eqref{BE-BCFD:c:rewrite} that
\begin{equation}\label{scheme:c:rewrite}
 (\varepsilon + \alpha\tau_{n+1})  c_{h}^{n+1} -\tau_{n+1} D_x (d_x c_h^{n+1})
                 -\tau_{n+1} D_y (d_y c_h^{n+1}) =\varepsilon c_{h}^{n}+ \tau_{n+1} \rho_{h}^{n}.
\end{equation}
Similarly as above, but more simple, we can show that the corresponding coefficient matrix of \eqref{scheme:c:rewrite} is also a non-singular M-matrix. Therefore, if $c_h^0=c^o > 0$ and $\rho_h^0=\rho^o > 0$ on $\Pi_x^*\times \Pi_y^*$, by deduction we directly have $c_h^{n+1} >0$.
\end{proof}

\begin{remark}\label{rmk:positivity:ellip}
In the parabolic--elliptic case $\varepsilon=0$, the discrete concentration equation in the BE-BCFD scheme reduces to a steady elliptic problem of the form
\[
- D_x(d_x c_h^{n+1}) - D_y(d_y c_h^{n+1}) + \alpha c_h^{n+1} = \rho_h^n.
\]
It is easy to verify that the associated coefficient matrix is a nonsingular M-matrix, and hence the positivity of $\rho_h^n$ directly implies $c_h^{n+1}>0$.
\end{remark}

\begin{remark} In fact,	$A_\rho$ can be written in the following form
	\begin{equation}\label{matrix:splitting}
		A_\rho = M_d + S
	\end{equation}
where $M_d := {\rm diag}(\text{vec}( \mM_h^{n+1}))$ is a diagonal matrix and $S \bm{1} = \bm{0}$.
\end{remark}

Next, define the discrete counterpart of the continuous energy \eqref{energy:continue:rewrite} as
\begin{equation}\label{discrete-energy}
  E^n := \Big(\rho_h^n \log ( \f{\rho_h^n}{\mM_h^n}) - \rho_h^n , 1 \Big)_{\mathrm{M}} + \f12 \|\bm{d} c_h^n\|_{\mathrm{TM}}^2  + \f12\|c_h^n\|_{\mathrm{M}}^2.
\end{equation}
We show that the fully discrete BE-BCFD scheme \eqref{BE-BCFD:rewrite}--\eqref{BE-BCFD:IBc:rewrite} unconditionally decays this energy \eqref{discrete-energy}.

\begin{theorem}[Discrete energy dissipation]\label{thm:energy-dissipasion}
	Let $\left\{\rho_h^{n+1}, c_h^{n+1}\right\}$ be the solutions to the BE-BCFD scheme \eqref{BE-BCFD:rewrite}--\eqref{BE-BCFD:IBc:rewrite}. Then, there holds
	$$
		E^{n+1}\leq E^n.
	$$
\end{theorem}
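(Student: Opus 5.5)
The plan is to split the energy difference $E^{n+1}-E^n$ into an entropy part and a chemoattractant part. Each part is estimated by testing one equation of \eqref{BE-BCFD:rewrite} with a suitable discrete function, arranged so that the coupling terms cancel exactly. Throughout, Theorem \ref{thm:posi-preserve} guarantees $\rho_h^n,\rho_h^{n+1}>0$ and hence $g_h^{n+1}>0$, so all logarithms are well defined. Theorem \ref{thm:be-mass-cov} gives $(\rho_h^{n+1}-\rho_h^n,1)_{\rm M}=0$.

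\textbf{Splitting the entropy part.} Since $\mM_h^n={\rm e}^{c_h^n}$, write $F(\rho,c):=\rho\log\rho-\rho c-\rho$, so that the first term of $E^n$ is $(F(\rho_h^n,c_h^n),1)_{\rm M}$. I will split
\[
F(\rho_h^{n+1},c_h^{n+1})-F(\rho_h^{n},c_h^{n})=\big[F(\rho_h^{n+1},c_h^{n+1})-F(\rho_h^{n},c_h^{n+1})\big]-\rho_h^n\,(c_h^{n+1}-c_h^n).
\]
For the bracket, convexity of $s\mapsto s\log s$ gives pointwise
\[
\rho^{n+1}\log\rho^{n+1}-\rho^n\log\rho^n\le \log\rho^{n+1}\,(\rho^{n+1}-\rho^n).
\]
Combined with mass conservation, the bracket summed in $(\cdot,1)_{\rm M}$ is bounded by $(\log\rho_h^{n+1}-c_h^{n+1},\rho_h^{n+1}-\rho_h^n)_{\rm M}=(\log g_h^{n+1},\rho_h^{n+1}-\rho_h^n)_{\rm M}$.

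\textbf{Bounding the entropy part.} Note that $\mM_h^{n+1}g_h^{n+1}-\mM_h^{n+1}\hat g_h^n=\rho_h^{n+1}-\rho_h^n$. I will therefore test \eqref{BE-BCFD:rho:rewrite} with $\log g_h^{n+1}$ and apply Lemma \ref{lemma:Dd}, which is allowed because the boundary fluxes vanish by \eqref{BE-BCFD:IBc:rewrite}. This yields
\[
(\log g_h^{n+1},\rho_h^{n+1}-\rho_h^n)_{\rm M}=-\tau_{n+1}\Big[(\mM_x^{n+1}d_xg_h^{n+1},d_x\log g_h^{n+1})_x+(\mM_y^{n+1}d_yg_h^{n+1},d_y\log g_h^{n+1})_y\Big].
\]
The right-hand side is $\le0$. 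Indeed, $\mM_x^{n+1},\mM_y^{n+1}>0$ since they are exponentials, and $\log$ is increasing, so each product $(g_{i+1,j}-g_{i,j})(\log g_{i+1,j}-\log g_{i,j})$ is nonnegative, and likewise in $y$. Hence the entropy part is bounded by $-(\rho_h^n,c_h^{n+1}-c_h^n)_{\rm M}$.

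\textbf{The chemoattractant part.} I will test \eqref{BE-BCFD:c:rewrite} with $c_h^{n+1}-c_h^n$ and use Lemma \ref{lemma:Dd} together with the identity $2a(a-b)=a^2-b^2+(a-b)^2$. This gives
\[
\tfrac{\varepsilon}{\tau_{n+1}}\|c_h^{n+1}-c_h^n\|_{\rm M}^2+\tfrac12\big(\|\bm d c_h^{n+1}\|_{\rm TM}^2-\|\bm d c_h^{n}\|_{\rm TM}^2\big)+\tfrac{\alpha}2\big(\|c_h^{n+1}\|_{\rm M}^2-\|c_h^{n}\|_{\rm M}^2\big)+(\text{nonnegative terms})=(\rho_h^n,c_h^{n+1}-c_h^n)_{\rm M}.
\]
Adding this to the entropy estimate, the coupling terms $\pm(\rho_h^n,c_h^{n+1}-c_h^n)_{\rm M}$ cancel exactly, and I obtain $E^{n+1}-E^n\le -\frac{\varepsilon}{\tau_{n+1}}\|c_h^{n+1}-c_h^n\|_{\rm M}^2-(\text{nonneg.})\le0$.

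\textbf{Expected obstacle.} The delicate point is bookkeeping rather than analysis. The zero-order term in \eqref{discrete-energy} must carry the factor $\alpha$, i.e. read $\frac{\alpha}{2}\|c_h^n\|_{\rm M}^2$, consistent with \eqref{energy:continue:rewrite}. As literally written, $\frac12\|c_h^n\|_{\rm M}^2$, the argument above only covers $\alpha=1$, so I would state and prove the inequality with $\frac{\alpha}{2}\|c_h^n\|_{\rm M}^2$. A second check is that the summation by parts in the $\rho$-equation is legitimate with the variable coefficients $\mM_x^{n+1}$ and $\mM_y^{n+1}$ and the boundary conditions on $g_h^{n+1}$. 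Finally, the explicit $\rho_h^n$ in the $c$-equation pairs exactly with the frozen $\rho_h^n$ in the splitting of $F$; this pairing is why the decoupled scheme is dissipative unconditionally.
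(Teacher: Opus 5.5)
Your proof is correct after one small repair (below), but it handles the entropy part by a different route from the paper.

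\textbf{How the two routes differ.} The paper writes $I_1=(\mM_h^{n+1}g_h^{n+1}\log g_h^{n+1},1)_{\rm M}-(\mM_h^{n+1}\hat g_h^{n}\log \hat g_h^{n},1)_{\rm M}+(\rho_h^n,c_h^n-c_h^{n+1})_{\rm M}$ and then argues through the M-matrix structure. The matrix $\tilde A_\rho=I+M_d^{-1}S$ has a nonnegative inverse with unit row sums, so $g_h^{n+1}$ is a row-wise convex combination of $\hat g_h^n$. Jensen's inequality for $x\log x$, combined with the weighted invariance $\bm{1}^\top W_d M_d\tilde A_\rho^{-1}=\bm{1}^\top W_d M_d$, then gives the entropy decrease.

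You instead use convexity of $F(\cdot,c)$ in $\rho$. You test the $\rho$-equation with the discrete chemical potential $\log g_h^{n+1}=\log\rho_h^{n+1}-c_h^{n+1}$, sum by parts with Lemma \ref{lemma:Dd}, and use the monotonicity of $\log$.

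\textbf{What each route buys.} Yours is the direct discrete analogue of the continuous computation behind \eqref{model:ks:ED}. It needs only positivity and summation by parts, with no inverse-positivity. It also produces explicit dissipation: the entropy production $\tau_{n+1}\big[(\mM_x^{n+1}d_xg_h^{n+1},d_x\log g_h^{n+1})_x+(\mM_y^{n+1}d_yg_h^{n+1},d_y\log g_h^{n+1})_y\big]$, plus the numerical-dissipation terms from $2a(a-b)=a^2-b^2+(a-b)^2$. This is a sharper inequality than the paper's $E^{n+1}-E^n\le-\tau_{n+1}\|d_tc_h^{n+1}\|_{\rm M}^2$.

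The paper's convex-combination argument never uses the specific form of $\log$. It yields $(\mM_h^{n+1}\Phi(g_h^{n+1}),1)_{\rm M}\le(\mM_h^{n+1}\Phi(\hat g_h^{n}),1)_{\rm M}$ for every convex $\Phi$, and it reuses the M-matrix property already established for positivity.

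\textbf{The slip to fix.} The derivative of $s\log s$ is $\log s+1$, not $\log s$. Convexity therefore gives $\rho^{n+1}\log\rho^{n+1}-\rho^{n}\log\rho^{n}\le(\log\rho^{n+1}+1)(\rho^{n+1}-\rho^{n})$. The inequality you display without the $+1$ is false pointwise whenever $\rho^{n+1}>\rho^n$, since its defect is $\rho^n\log(\rho^{n+1}/\rho^n)$.

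With the correct slope, the extra $(\rho^{n+1}-\rho^n)$ cancels the $-\rho$ in $F$ pointwise; equivalently, $\p_\rho F=\log\rho-c=\log g$. So the bracket is bounded pointwise by $\log g_h^{n+1}\,(\rho_h^{n+1}-\rho_h^{n})$, and mass conservation is not even needed at that step. Your summed bound is therefore right, just not for the reason you gave.

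\textbf{Your remark on $\alpha$.} This is well taken. In \eqref{energy:c} the paper silently sets $\varepsilon=\alpha=1$. For general $\alpha$ the energy must carry $\frac{\alpha}{2}\|c_h^n\|_{\rm M}^2$, exactly as you propose, and your version also covers any $\varepsilon\ge 0$.
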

\begin{proof} A direct calculation shows that
\begin{equation}\label{discrete-energy:diff}
		\begin{aligned}
		E^{n+1} - E^n &= \Big(\rho_h^{n+1} \log ( \f{\rho_h^{n+1}}{\mM_h^{n+1}}) -\rho_h^n \log ( \f{\rho_h^n}{\mM_h^n}), 1 \Big)_{\mathrm{M}}\\
		&\qquad+ \f12 \left[\left( \|\bm{d} c_h^{n+1}\|_{\mathrm{TM}}^2 +\|c_h^{n+1}\|_{\mathrm{M}}^2  \right)  -  \left( \|\bm{d} c_h^n\|_{\mathrm{TM}}^2+\| c_h^n\|_{\mathrm{TM}}^2\right) \right]
		 =: \sum_{i=1}^2 I_i,
		\end{aligned}
\end{equation}
where the mass conservation law as shown in Theorem \ref{thm:be-mass-cov} is applied.
    
Next, we estimate the right-hand side of \eqref{discrete-energy:diff} term by term. Firstly, due to the definition in \eqref{BE-BCFD:e3}, the $I_1$-term in \eqref{discrete-energy:diff} can be rewritten as
 \begin{equation}\label{I1}
 	\begin{aligned}
 		I_1 &= \Big(\rho_h^{n+1} \log ( \f{\rho_h^{n+1}}{\mM_h^{n+1}}) , 1 \Big)_{\mathrm{M}} - \Big(\rho_h^{n} \log ( \f{\rho_h^n}{\mM_h^{n+1}}), 1 \Big)_{\mathrm{M}} + \left(\rho_h^n, c_h^n - c_h^{n+1} \right)_{\mathrm{M}}\\
 		&=\left(\mM_h^{n+1}g_h^{n+1} \log g_h^{n+1}, 1 \right)_{\mathrm{M}} - \left(\mM_h^{n+1}\hat{g}_h^{n} \log \hat{g}_h^{n}, 1 \right)_{\mathrm{M}} + \left(\rho_h^n, c_h^n - c_h^{n+1} \right)_{\mathrm{M}}.
 	\end{aligned}
 \end{equation}
Let $W_d := {\rm diag}( \text{vec}(\bm{h_x}^\top \otimes \bm{h_y}) )$ be an $N$-by-$N$ diagonal matrix. Let $B=(b_{i, j})$ with entry $b_{i,j} := g_{h,i,j}^{n+1} \log ( g_{h,i,j}^{n+1} )$ be an $N_x$-by-$N_y$ matrix. Thus, we have
\begin{equation}\label{I11:equvai}
	\begin{aligned}
	\left(\mM_h^{n+1}g_h^{n+1} \log g_h^{n+1}, 1 \right)_{\mathrm{M}} 
	&= \sum_{i=1}^{N_x} \sum_{j=1}^{N_y} \Delta x_i \Delta y_j \mM_{h,i,j}^{n+1} b_{i,j} 
	= \bm{1}^\top W_d M_d\, \text{vec}(B).
	\end{aligned}
\end{equation}
Note that by \eqref{matrix:splitting}, equation \eqref{MatrixForm:rho} is equivalent to
\begin{equation*} 
   \tilde{A}_\rho~\text{vec}(g_h^{n+1}) = \text{vec}(\hat{g}_h^{n}), \quad \tilde{A}_\rho := I + M_d^{-1} S.
\end{equation*}
It is easy to check that $\tilde{A}_\rho \bm{1} = \bm{1}$, as the discrete diffusion operator preserves
constants, i.e., $S\bm{1}=\bm{0}$.
Therefore, $\tilde{A}_\rho$ is also a non-singular M-matrix. Consequently, Lemma \ref{lem:M-matrix:inverse} implies that $\tilde{A}_\rho^{-1}\geq 0$. Moreover, it is easy to verify that $\tilde{A}_\rho^{-1} \bm{1} = \bm{1}$, which further implies that 
$
    \text{vec}(g_h^{n+1})=\tilde A_\rho^{-1} \text{vec}(\hat{g}_h^{n})
$ defines a row convex combination of $\text{vec}(\hat{g}_h^{n})$. Furthermore, as the function $x\log x$ is convex for $0< x <\infty$, so by Jensen’s inequality and the above discussion, we immediately have
\begin{equation}\label{convex}
	\text{vec}(B) \leq \tilde{A}_\rho^{-1} \text{vec}(\hat{B}),
\end{equation}
where $\hat{B}=(\hat{b}_{i, j})$ is a matrix of size $N_x\times N_y$ with entry $\hat{b}_{i,j} := \hat{g}_{h,i,j}^{n} \log ( \hat{g}_{h,i,j}^{n} )$. 

In addition, note that the fact $\bm{1}^\top W_d S = \bm{0}^\top$, we see
\begin{equation*}
	\bm{1}^\top W_d M_d \tilde{A}_\rho = \bm{1}^\top W_d M_d (I + M_d^{-1} S  ) = \bm{1}^\top W_d M_d  \Longrightarrow \bm{1}^\top W_d M_d \tilde{A}_\rho^{-1} = \bm{1}^\top W_d M_d,
\end{equation*}
and therefore, combining \eqref{I11:equvai}--\eqref{convex}, we obtain
\begin{equation}\label{I11:result}
	\begin{aligned}
	 \left(\mM_h^{n+1}g_h^{n+1} \log g_h^{n+1}, 1 \right)_{\mathrm{M}} 
		&\leq  \bm{1}^\top W_d M_d \tilde{A}_\rho^{-1} \text{vec}(\hat{B}) \\
		& = \bm{1}^\top W_d M_d \text{vec}(\hat{B}) 
		= \left(\mM_h^{n+1}\hat{g}_h^{n} \log \hat{g}_h^{n}, 1 \right)_{\mathrm{M}}.
	\end{aligned}
\end{equation}
Thus, inserting \eqref{I11:result} into \eqref{I1}, the first term can be estimated as
\begin{equation}\label{I1:result}
	I_1 \leq \left(\rho_h^n, c_h^n - c_h^{n+1} \right)_{\mathrm{M}}.
\end{equation}

Secondly, we shall pay attention to the estimate of $I_2$, which is related to the concentration $c_h$. Taking the discrete inner product of \eqref{BE-BCFD:c:rewrite} with $c_h^{n+1}-c_h^n$, with the help of Lemma \ref{lemma:Dd}, we obtain
\begin{equation}\label{energy:c}
	\begin{aligned}
	\left(d_t c_{h}^{n+1}, c_h^{n+1}-c_h^n \right)_{\mathrm{M}}
	&= \left(D_x (d_x c_h^{n+1}) + D_y (d_y c_h^{n+1}), c_h^{n+1}-c_h^n \right)_{\mathrm{M}} \\
	&\qquad-  \left(c_{h}^{n+1}, c_h^{n+1}-c_h^n \right)_{\mathrm{M}} + \left(\rho_{h}^{n}, c_h^{n+1}-c_h^n \right)_{\mathrm{M}} \\
	& = -\left(d_x c_h^{n+1}, d_x (c_h^{n+1}-c_h^n) \right)_x - \left(d_y c_h^{n+1}, d_y (c_h^{n+1}-c_h^n) \right)_y\\
	&\qquad-  \left(c_{h}^{n+1}, c_h^{n+1}-c_h^n \right)_{\mathrm{M}} + \left(\rho_{h}^{n}, c_h^{n+1}-c_h^n \right)_{\mathrm{M}}.
	\end{aligned}
\end{equation}
As  $a(a-b)\ge \f12 (a^2-b^2)$, the first three terms on the right-hand side of \eqref{energy:c} can be estimated respectively as
\begin{equation}\label{energy:c:2}
\begin{aligned}
	-\left(d_x c_h^{n+1}, d_x (c_h^{n+1}-c_h^n) \right)_x
	 &\leq - \frac{1}{2}\left(\|d_x c_h^{n+1}\|_x^2-\|d_x c_h^n\|_x^2\right), \\
	- \left(d_y c_h^{n+1}, d_y (c_h^{n+1}-c_h^n) \right)_y  
	&\leq - \frac{1}{2}\left(\|d_y c_h^{n+1}\|_y^2-\|d_y c_h^n\|_y^2\right),\\
	-  \left(c_{h}^{n+1}, c_h^{n+1}-c_h^n \right)_{\mathrm{M}}
	&\leq - \frac{1}{2}\left(\| c_h^{n+1}\|_{\mathrm{M}}^2-\| c_h^n\|_{\mathrm{M}}^2\right).
\end{aligned}
\end{equation}
Therefore, inserting \eqref{energy:c:2} into \eqref{energy:c}, and rearranging the resulting equation, we have
\begin{equation}\label{I2}
	\begin{aligned}
	I_2&=\f12 \left( \|\bm{d} c_h^{n+1}\|_{\mathrm{TM}}^2 - \|\bm{d} c_h^n\|_{\mathrm{M}}^2 \right) 
       + \f12\left( \|c_h^{n+1}\|_{\mathrm{M}}^2 -\| c_h^n\|_{\mathrm{TM}}^2\right) \\
	& \leq -\left(d_t c_{h}^{n+1}, c_h^{n+1}-c_h^n \right)_{\mathrm{M}} + \left(\rho_{h}^{n}, c_h^{n+1}-c_h^n \right)_{\mathrm{M}}\\
    & =-\tau_{n+1}\|d_t c_{h}^{n+1}\|_{\mathrm{M}}^2 + \left(\rho_{h}^{n}, c_h^{n+1}-c_h^n \right)_{\mathrm{M}}.
	\end{aligned}
\end{equation}

Finally, combining \eqref{I1:result} and \eqref{I2} with \eqref{discrete-energy:diff}, we immediately get
\begin{equation*}
	E^{n+1} - E^n 	\leq -\tau_{n+1} \|d_t c_{h}^{n+1}\|_{\mathrm{M}}^2 \le 0,
\end{equation*}
which proves the conclusion.
\end{proof}
\subsection{Analysis of the PC-BCFD scheme}
This subsection addresses the structure-preserving properties of the second-order PC-BCFD scheme \eqref{PC-BCFD:rewrite}–\eqref{PC-BCFD:IBc:rewrite}, where only discrete mass conservation and conditional positivity preservation are proved.

\begin{theorem}[Discrete mass conservation]\label{thm:pc-mass-cov}
Let $\big\{\rho_h^{n+1}, c_h^{n+1}, \rho_h^{n+1/2}, c_h^{n+1/2}\big\}$ be the solutions to the PC-BCFD scheme \eqref{PC-BCFD:rewrite}--\eqref{PC-BCFD:IBc:rewrite}. Then, there holds
	\begin{equation*}
	\left(\rho_h^{n+1}, 1\right)_{\mathrm{M}} = (\rho_h^{n+1/2}, 1)_{\mathrm{M}} 
    = \left(\rho_h^n, 1\right)_{\mathrm{M}}.
	\end{equation*}
\end{theorem}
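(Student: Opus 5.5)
The plan mirrors the proof of Theorem \ref{thm:be-mass-cov}, applied separately to the predictor and corrector stages. Each $\rho$-equation in \eqref{PC-BCFD:rewrite} is a discrete divergence of a flux that vanishes on the boundary. Testing against the constant $1$ in the $(\cdot,\cdot)_{\rm M}$ inner product therefore kills the right-hand side, and only the time-difference term survives.

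\textbf{Predictor step.} I take the inner product of \eqref{PC-BCFD:rho_half:rewrite} with $1$. For the left-hand side I use the identities $\mM_h^{n+1/2}g_h^{n+1/2}=\rho_h^{n+1/2}$ and $\mM_h^{n+1/2}\check g_h^n=\rho_h^n$. For the right-hand side I apply Lemma \ref{lemma:Dd} with
\[
v=\mM_x^{n+1/2}d_xg_h^{n+1/2}, \qquad w=\mM_y^{n+1/2}d_yg_h^{n+1/2}.
\]
These grid functions vanish at $i=0,N_x$ and $j=0,N_y$ by the boundary conditions \eqref{PC-BCFD:IBc:rewrite}, so the right-hand side equals
\[
-(v,d_x1)_x-(w,d_y1)_y=0,
\]
since $d_x1=d_y1=0$. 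Multiplying by $\tau_{n+1}/2$ gives $(\rho_h^{n+1/2},1)_{\rm M}=(\rho_h^n,1)_{\rm M}$.

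\textbf{Corrector step.} I test \eqref{PC-BCFD:rho:rewrite} with $1$ in the same way. The left-hand side becomes $\big((\rho_h^{n+1},1)_{\rm M}-(\rho_h^n,1)_{\rm M}\big)/\tau_{n+1}$, using $\mM_h^{n+1}g_h^{n+1}=\rho_h^{n+1}$ and $\mM_h^{n+1}\hat g_h^n=\rho_h^n$. The fluxes on the right are $\mM_x^{n+1/2}d_x\overline g_h^{n+1/2}$ and $\mM_y^{n+1/2}d_y\overline g_h^{n+1/2}$. They vanish on the boundary because $\overline g_h^{n+1/2}=(g_h^{n+1}+g_h^n)/2$ and both $g_h^{n+1}$ and $g_h^n$ satisfy homogeneous discrete Neumann conditions, again by \eqref{PC-BCFD:IBc:rewrite}, which the corrector inherits via \eqref{BE-BCFD:IBc}. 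Lemma \ref{lemma:Dd} then makes the right-hand side zero, so $(\rho_h^{n+1},1)_{\rm M}=(\rho_h^n,1)_{\rm M}$.

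Chaining the two equalities gives the stated identity. No step is a real obstacle. The one point needing care is bookkeeping: the averaged variable in \eqref{PC-BCFD:e1b} is $\overline{\rho_h/\mM_h}^{n+1/2}$, with $g_h^n=\rho_h^n/\mM_h^n$, whereas $\hat g_h^n=\rho_h^n/\mM_h^{n+1}$. The identification of the left-hand side must therefore go through the original form \eqref{PC-BCFD:e1b}, $d_t\rho_h^{n+1}$, and the definition of $\hat g_h^n$ should only be used to confirm $\mM_h^{n+1}\hat g_h^n=\rho_h^n$. I would also state explicitly that the boundary condition for $g_h^n$ holds at every time level, including $n=0$, either by the scheme's imposed conditions or by convention.
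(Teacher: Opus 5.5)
Your proposal is correct and follows essentially the same route as the paper. Like the paper, you test the predictor equation \eqref{PC-BCFD:rho_half:rewrite} and the corrector equation \eqref{PC-BCFD:rho:rewrite} against $1$, use Lemma \ref{lemma:Dd} together with the vanishing boundary fluxes to remove the right-hand sides, and use $\mM_h^{n+1/2}\check g_h^n=\mM_h^{n+1}\hat g_h^n=\rho_h^n$ to read off the two mass identities. Your extra bookkeeping remarks make explicit two points the paper's short proof leaves implicit: that $\overline g_h^{\,n+1/2}$ inherits the discrete Neumann condition from $g_h^{n+1}$ and $g_h^n$, and that the boundary-flux conditions hold at every time level by construction.
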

\begin{proof}  Similar to Theorem \ref{thm:be-mass-cov}, by taking the inner product on both sides of equations \eqref{PC-BCFD:rho_half:rewrite} and \eqref{PC-BCFD:rho:rewrite} with 1, respectively, we get
\begin{equation*}
	\big(\mM_h^{n+1/2}g_h^{n+1/2}, 1 \big)_{\mathrm{M}} = \big(\mM_h^{n+1/2}\check{g}_h^{n}, 1\big)_{\mathrm{M}}\quad 
	\text{and}\quad \left(\mM_h^{n+1}g_h^{n+1}, 1\right)_{\mathrm{M}} = \left(\mM_h^{n+1}\hat{g}_h^{n}, 1\right)_{\mathrm{M}},
\end{equation*}
which imply that
\begin{equation*}
	\big(\rho_h^{n+1/2}, 1 \big)_{\mathrm{M}} = \big(\rho_h^{n}, 1 \big)_{\mathrm{M}}\quad 
		\text{and}\quad \left(\rho_h^{n+1}, 1\right)_{\mathrm{M}} = \left(\rho_h^{n}, 1\right)_{\mathrm{M}}.
\end{equation*}
Therefore, the results in Theorem \ref{thm:pc-mass-cov} are obtained.
\end{proof}

\begin{theorem}[Positivity-preserving]\label{thm:pc-posi-preserve}
	Let $\big\{\rho_h^{n+1}, c_h^{n+1}, \rho_h^{n+1/2}, c_h^{n+1/2}\big\}$ be the solutions to the PC-BCFD scheme \eqref{PC-BCFD:rewrite}--\eqref{PC-BCFD:IBc:rewrite}. Then,
	\begin{enumerate}[(i)]
		\item if $\rho^o > 0$,  then $\rho_h^{n+1/2} >0$; moreover, if $c^o > 0$ and $\rho^o > 0$,  then $c_h^{n+1/2} >0$;
		\item if $\rho^o > 0$ and $\tau \leq \f{h^2}{ \sigma^6}\f{\min_{i,j}\{\mM_{h,i,j}^{n}\} }{ \max_{l,j}\{\mM_{x,l-1/2,j}^{n+1/2}\} + \max_{i,k}\{\mM_{y,i,k-1/2}^{n+1/2}\} }$,  then $\rho_h^{n+1} >0$; moreover, if $c^o > 0$, $\rho^o > 0$ and $\tau \leq \f{\varepsilon h^2}{2 \sigma^6}$, then $c_h^{n+1} >0$.
	\end{enumerate}
\end{theorem}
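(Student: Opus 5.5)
Part (i) follows from the argument for Theorem \ref{thm:posi-preserve} with $\tau_{n+1}$ replaced by $\tau_{n+1}/2$. The predictor equation \eqref{PC-BCFD:rho_half:rewrite} has the matrix form $A_\rho^{1/2}\,\text{vec}(g_h^{n+1/2})=\text{vec}(\rho_h^n)$. Here $A_\rho^{1/2}$ has the same five-point structure as $A_\rho$ in Cases i--iv, with $\mM^{n+1}$ replaced by $\mM^{n+1/2}$. It therefore has positive diagonal entries, non-positive off-diagonal entries, and satisfies $A_\rho^{1/2}\bm 1=\text{vec}(\mM_h^{n+1/2})>0$. Lemmas \ref{lem:M-matrix} and \ref{lem:M-matrix:inverse} then give $(A_\rho^{1/2})^{-1}\ge0$, and this matrix is nonsingular, so no row of the inverse vanishes. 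Hence $\rho_h^n>0$ implies $g_h^{n+1/2}>0$, and so $\rho_h^{n+1/2}=\mM_h^{n+1/2}g_h^{n+1/2}>0$. For $c_h^{n+1/2}$, the coefficient matrix of $(\varepsilon+\alpha\tau_{n+1}/2)c-\tfrac{\tau_{n+1}}{2}(D_xd_x+D_yd_y)c$ is a nonsingular M-matrix, and the right-hand side $\varepsilon c_h^n+\tfrac{\tau_{n+1}}{2}\rho_h^n$ is positive. The whole argument is an induction on $n$, with positivity of $\rho_h^n$ and $c_h^n$ as the hypothesis at level $n$.

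For part (ii) the corrector is Crank--Nicolson-like, so the right-hand side is no longer automatically positive; this is the main obstacle. I will write \eqref{PC-BCFD:rho:rewrite} as
\begin{equation*}
\Big(M_d^{n+1}+\tfrac{\tau_{n+1}}{2}S^{n+1/2}\Big)\text{vec}(g_h^{n+1}) = \text{vec}(\rho_h^n)-\tfrac{\tau_{n+1}}{2}S^{n+1/2}\,\text{vec}(g_h^{n}),
\end{equation*}
where $g_h^n=\rho_h^n/\mM_h^n$ and $S^{n+1/2}$ is the five-point operator built from $\mM_x^{n+1/2},\mM_y^{n+1/2}$, with $S^{n+1/2}\bm1=\bm0$.

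The left matrix is a nonsingular M-matrix as in part (i), since its row sums equal $\mM_h^{n+1}>0$. It therefore suffices to show that the right-hand side is positive. Writing $\rho_h^n=\mM_h^n g_h^n$, the $\kappa$th entry of the right-hand side equals
\begin{equation*}
\Big(\mM_{h,i,j}^n-\tfrac{\tau_{n+1}}{2}S_{\kappa\kappa}\Big)g_{h,i,j}^n+\tfrac{\tau_{n+1}}{2}\sum_{\ell\neq\kappa}|S_{\kappa\ell}|\,g_{h,\ell}^n .
\end{equation*}
The off-diagonal part is nonnegative because $g_h^n>0$. It remains to make the diagonal coefficient positive, i.e.\ to show $\tfrac{\tau_{n+1}}{2}S_{\kappa\kappa}<\mM_{h,i,j}^n$.

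I bound $S_{\kappa\kappa}\le 2\max\mM_x^{n+1/2}/(\min\Delta x_i\,\min\Delta x_{i\pm1/2})+2\max\mM_y^{n+1/2}/(\cdots)$ using the regularity bound $\min\Delta x_i\ge h/\sigma$. Then
\begin{equation*}
S_{\kappa\kappa}\le \frac{2\sigma^2}{h^2}\Big(\max\mM_x^{n+1/2}+\max\mM_y^{n+1/2}\Big),
\end{equation*}
and the stated bound with $\sigma^6\ge\sigma^2$, valid since $\sigma\ge1$, gives strict positivity. The extra powers of $\sigma$ give slack for the non-uniform interval lengths.

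The same approach handles $c_h^{n+1}$. From \eqref{PC-BCFD:c:rewrite}, the unknown $c_h^{n+1}$ solves
\begin{equation*}
\Big(\varepsilon+\tfrac{\alpha\tau}{2}-\tfrac{\tau}{2}\Delta_h\Big)c_h^{n+1}=\Big(\varepsilon-\tfrac{\alpha\tau}{2}+\tfrac{\tau}{2}\Delta_h\Big)c_h^n+\tau\rho_h^{n+1/2},
\end{equation*}
where $\Delta_h:=D_xd_x+D_yd_y$. The left-hand side is an M-matrix. On the right, $\tau\rho_h^{n+1/2}>0$ by part (i). The off-diagonal contributions of $\tfrac{\tau}{2}\Delta_h c_h^n$ are nonnegative because $c_h^n>0$. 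The diagonal coefficient is $\varepsilon-\tfrac{\alpha\tau}{2}-\tfrac{\tau}{2}|(\Delta_h)_{\kappa\kappa}|$, and with $|(\Delta_h)_{\kappa\kappa}|\le 4\sigma^2/h^2$ the condition $\tau\le\varepsilon h^2/(2\sigma^6)$ makes the Laplacian part at most $\varepsilon/\sigma^4$. The $\alpha$ term needs care; I would either absorb it using the slack, or accept a minor restriction and note that positivity of $\tau\rho_h^{n+1/2}$ covers it.

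The delicate points are tracking the index bounds on non-uniform grids and the $\alpha$ term.
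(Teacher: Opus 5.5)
Your handling of part (i) and of $\rho_h^{n+1}$ in part (ii) follows the paper's argument. The left-hand side is an M-matrix, and the explicit right-hand side is nonnegative because its diagonal coefficient is bounded using $\Delta x_i,\Delta x_{i\pm1/2}\ge h/\sigma$. Your splitting $M_d^{n+1}+\frac{\tau_{n+1}}{2}S^{n+1/2}$ also covers edge and corner nodes at once, which the paper only asserts. One small correction: $\sigma^6\ge\sigma^2$ only gives $\mM_{h,i,j}^n-\frac{\tau_{n+1}}{2}S_{\kappa\kappa}\ge 0$, with equality possible when $\sigma=1$. Strict positivity of the right-hand side comes from the neighbour terms $\frac{\tau_{n+1}}{2}|S_{\kappa\ell}|g^n_{h,\ell}>0$.

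The genuine gap is the $\alpha$-term in the claim for $c_h^{n+1}$, and neither of your proposed repairs works.
\begin{itemize}
\item \emph{Slack.} The stated condition leaves only $\varepsilon(1-\sigma^{-4})$, which vanishes on a uniform grid. With $\sigma=1$ and $\tau=\varepsilon h^2/2$, the diagonal coefficient at an interior node is exactly $-\alpha\tau/2<0$.
\item \emph{Source term.} $\tau\rho_h^{n+1/2}$ cannot compensate, since nothing bounds $\rho_h^{n+1/2}$ from below relative to $\alpha c_h^n$.
\end{itemize}
In fact no argument can close this, because the claim is false without an $\alpha$-dependent restriction. Take spatially constant data $c^o\equiv1$, $\rho^o\equiv\rho_0>0$. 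The first step stays constant: $\mM_x^{1/2},\mM_y^{1/2},\mM_h^{1/2}$ are all equal, so $\rho_h^{1/2}=\rho_0$. Then \eqref{PC-BCFD:c:rewrite} gives
\[
c_h^{1}=\frac{\varepsilon-\alpha\tau/2+\tau\rho_0}{\varepsilon+\alpha\tau/2},
\]
which is negative as soon as $\alpha\tau/2>\varepsilon+\tau\rho_0$. For example, this holds on a uniform grid with $\tau=\varepsilon h^2/2$ and $\alpha>4/h^2+2\rho_0$, which satisfies the stated time-step condition.

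The paper's own proof passes over this point with ``Similarly''. What your diagonal-coefficient argument actually proves is positivity of $c_h^{n+1}$ under a condition such as $\tau_{n+1}\big(\frac{\alpha}{2}+\frac{2\sigma^2}{h^2}\big)\le\varepsilon$. The stated condition $\tau\le\varepsilon h^2/(2\sigma^6)$ implies this precisely when $\alpha h^2\le4(\sigma^6-\sigma^2)$, in particular when $\alpha=0$. You should state that restriction explicitly rather than try to absorb the $\alpha$-term.
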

\begin{proof}	The results in (i) can be proved in a very similar way as Theorem \ref{thm:posi-preserve}. In the following, we only need to prove the results in (ii).

Similar as in Theorem \ref{thm:posi-preserve}, we can rewrite \eqref{PC-BCFD:rho:rewrite} in a compact matrix form:
\begin{equation}\label{MatrixForm:rho'}
	B_\rho\, \text{vec}(g_h^{n+1}) = \text{vec}(f)(\hat{g}_{h}^{n},g_h^{n}),
\end{equation}
where the right-hand side of \eqref{MatrixForm:rho'} is a vector with respect to $\hat{g}_{h}^{n}$ and $g_h^{n}$. The entries of
$B_\rho$ and $\text{vec}(f)$ can be written explicitly. For example, for an interior node $(x_i,y_j)$ of $\Pi_x^*\times \Pi_y^*$, i.e., $2\leq i \leq N_x-1$, $2\leq j \leq N_y-1$, the linear algebraic equation corresponding to \eqref{PC-BCFD:rho:rewrite} reads as follows:
	\begin{equation}\label{pc-scheme:explicit:interior}
		\begin{aligned}
			&\mM_{h,i,j}^{n+1}g_{h,i,j}^{n+1} \\
			&\quad- \f{\tau_{n+1}}{2} \Big[ \f{\mM_{x,i-1/2,j}^{n+1/2}}{\Delta x_i \Delta x_{i-1/2}} g_{h,i-1,j}^{n+1}  - ( \f{\mM_{x,i-1/2,j}^{n+1/2}}{\Delta x_i \Delta x_{i-1/2}} + \f{\mM_{x,i+1/2,j}^{n+1/2}}{\Delta x_i \Delta x_{i+1/2}} ) g_{h,i,j}^{n+1} + \f{\mM_{x,i+1/2,j}^{n+1/2}}{\Delta x_i \Delta x_{i+1/2}} g_{h,i+1,j}^{n+1} \Big]\\
			&\quad- \f{\tau_{n+1}}{2} \Big[ \f{\mM_{y,i,j-1/2}^{n+1/2}}{\Delta y_j \Delta y_{j-1/2}} g_{h,i,j-1}^{n+1}  - ( \f{\mM_{y,i,j-1/2}^{n+1/2}}{\Delta y_j \Delta y_{j-1/2}} + \f{\mM_{y,i,j+1/2}^{n+1/2}}{\Delta y_j \Delta y_{j+1/2}} ) g_{h,i,j}^{n+1} + \f{\mM_{y,i,j+1/2}^{n+1/2}}{\Delta y_j \Delta y_{j+1/2}} g_{h,i,j+1}^{n+1} \Big]\\
			&= \mM_{h,i,j}^{n+1}\hat{g}_{h,i,j}^{n}\\
			&\quad+ \f{\tau_{n+1}}{2} \Big[ \f{\mM_{x,i-1/2,j}^{n+1/2}}{\Delta x_i \Delta x_{i-1/2}} g_{h,i-1,j}^{n}  - ( \f{\mM_{x,i-1/2,j}^{n+1/2}}{\Delta x_i \Delta x_{i-1/2}} + \f{\mM_{x,i+1/2,j}^{n+1/2}}{\Delta x_i \Delta x_{i+1/2}} ) g_{h,i,j}^{n} + \f{\mM_{x,i+1/2,j}^{n+1/2}}{\Delta x_i \Delta x_{i+1/2}} g_{h,i+1,j}^{n} \Big]\\
			&\quad+ \f{\tau_{n+1}}{2} \Big[ \f{\mM_{y,i,j-1/2}^{n+1/2}}{\Delta y_j \Delta y_{j-1/2}} g_{h,i,j-1}^{n}  - ( \f{\mM_{y,i,j-1/2}^{n+1/2}}{\Delta y_j \Delta y_{j-1/2}} + \f{\mM_{y,i,j+1/2}^{n+1/2}}{\Delta y_j \Delta y_{j+1/2}} ) g_{h,i,j}^{n} + \f{\mM_{y,i,j+1/2}^{n+1/2}}{\Delta y_j \Delta y_{j+1/2}} g_{h,i,j+1}^{n} \Big].
		\end{aligned}
	\end{equation}
Besides, for the corner and edge nodes, the scheme can be rewritten in a similar way as in Theorem \ref{thm:posi-preserve}.
It is then easy to verify that the coefficient matrix $B_\rho$ is still a non-singular M-matrix. Therefore, if right-hand side vector $\text{vec}(f)$ is also non-negative, the positivity of $\rho_h^{n+1}$ can be guaranteed. Assuming that
\begin{equation}\label{pc-positive-rhs}
	\mM_{h,i,j}^{n+1}\hat{g}_{h,i,j}^{n} - \f{\tau_{n+1}}{2}( \f{\mM_{x,i-1/2,j}^{n+1/2}}{\Delta x_i \Delta x_{i-1/2}} + \f{\mM_{x,i+1/2,j}^{n+1/2}}{\Delta x_i \Delta x_{i+1/2}} +
	\f{\mM_{y,i,j-1/2}^{n+1/2}}{\Delta y_j \Delta y_{j-1/2}} + \f{\mM_{y,i,j+1/2}^{n+1/2}}{\Delta y_j \Delta y_{j+1/2}} )g_{h,i,j}^{n} \geq 0,
\end{equation}
for an arbitrary $i, j$, which indicates that if
\begin{equation}\label{pc-posi-condition}
	\tau_{n+1} \leq \f{h^2}{ \sigma^6}\f{\min_{i,j}\{\mM_{h,i,j}^{n}\} }{ \max_{l,j}\{\mM_{x,l-1/2,j}^{n+1/2}\} + \max_{i,k}\{\mM_{y,i,k-1/2}^{n+1/2}\} }
\end{equation}
holds, the right-hand side of \eqref{pc-scheme:explicit:interior} is positive for any $2\leq i \leq N_x-1$, $2\leq j \leq N_y-1$. Thus, under the time-step condition \eqref{pc-posi-condition}, the numerical solution $g_h^{n+1}$ (and consequently $\rho_h^{n+1}$) obtained from \eqref{PC-BCFD:rho:rewrite} preserves positivity.

Similarly, under the time-step condition 
\begin{equation}\label{pc-posi-condition2}
	\tau_{n+1} \leq \f{\varepsilon h^2}{2 \sigma^6},
\end{equation}
the numerical solution $c_h^{n+1}$ obtained from \eqref{PC-BCFD:c:rewrite} also preserves positivity.
\end{proof}

\begin{remark}\label{rm:positivity}As discussed in Remark \ref{rmk:positivity:ellip}, for the parabolic--elliptic case $\varepsilon=0$, the prediction equation for the concentration in the PC-BCFD scheme still yields $c_h^{n+1/2}>0$ by the same M-matrix argument. However, the correction step is written for the average value $\overline c_h^{\,n+1/2}=(c_h^{n+1}+c_h^n)/2$, so the present analysis only gives the positivity of $\overline c_h^{\,n+1/2}$, but does not directly imply that of $c_h^{n+1}$. Furthermore, we note that the conditions \eqref{pc-posi-condition} and \eqref{pc-posi-condition2} seem very limited, as for example \eqref{pc-posi-condition} derives from the restrictive assumption \eqref{pc-positive-rhs} used to maintain the positivity of the right-hand side of \eqref{MatrixForm:rho'}. In practice, however, this only requires that the right-hand side of \eqref{pc-scheme:explicit:interior} (but not that of \eqref{pc-positive-rhs}) is positive. Thus, such conditions \eqref{pc-posi-condition}--\eqref{pc-posi-condition2} are only sufficient, and the numerical tests in Section \ref{sec:num} indeed show that they are not necessary.
\end{remark}

\begin{remark}
Since \eqref{PC-BCFD:c_half:rewrite}--\eqref{PC-BCFD:rho_half:rewrite} have essentially the same mathematical structure as \eqref{BE-BCFD:c:rewrite}--\eqref{BE-BCFD:rho:rewrite}, the predicted solutions $\rho_h^{n+1/2}$ and $c_h^{n+1/2}$ satisfy the same discrete energy-dissipation law, and hence $E^{n+1/2}\le E^n$. In contrast, the correction step \eqref{PC-BCFD:c:rewrite}--\eqref{PC-BCFD:rho:rewrite} involves both the last time value $\hat g_h^n$ and the average values $\overline c_h^{\,n+1/2}$ and $\overline g_h^{\,n+1/2}$; therefore, it no longer retains the same discrete gradient flow structure as the first-order BE-BCFD scheme. Consequently, the energy-dissipation argument used for the first-order scheme cannot be applied directly to the correction step.
Hence, no discrete energy-dissipation result is established here for the fully discrete second-order prediction-correction scheme. Nevertheless, numerical experiments suggest that the proposed second-order method remains energy dissipative in practice. 

We also note that the present method is a fully discrete linear implicit scheme formulated in terms of the original variables. To the best of our knowledge, comparable second-order linear schemes for the Keller--Segel systems, together with rigorous positivity-preserving and energy-dissipation analysis, remain scarce in the literature. In contrast, a recent second-order nonlinear scheme that preserves both positivity and the original energy dissipation is presented in  \cite{DWZ'25}.
\end{remark}

 \section{Numerical results}\label{sec:num}
In this section, we present several numerical experiments using the proposed BE-BCFD scheme \eqref{BE-BCFD:rewrite}–\eqref{BE-BCFD:IBc:rewrite} and  PC-BCFD scheme \eqref{PC-BCFD:rewrite}–\eqref{PC-BCFD:IBc:rewrite}. Subsection \ref{subsec:accuracy} demonstrates the accuracy of both schemes on both uniform and non-uniform spatial grids. Subsections \ref{example:pe}–\ref{example:pp} are used to simulate various blow-up scenarios with different initial values, and meanwhile, to verify preservation of physical structures: mass conservation, positivity-preserving and energy dissipation, that proved in Theorems \ref{thm:be-mass-cov}--\ref{thm:posi-preserve} and \ref{thm:energy-dissipasion}--\ref{thm:pc-posi-preserve}.

Two different types of non-uniform spatial grids are utilized in this paper, which are listed as follows:
\begin{enumerate}[(i)]
	\item \emph{Random disturbance grids}: We first define a uniform partition $x_{{\rm fix},i+1/2}=a^x+ ih_{\rm fix}$, $i=0,\ldots, N_x$, with equal grid size $ h_{\rm fix}=(b^x-a^x)/N_x$. Then, through a slight random adjustment of the grid sizes, we define the non-uniform grids:
	\begin{equation}\label{grid:rand}
		x_{i+1/2} = x_{{\rm fix},i+1/2} + \beta *h_{\rm fix}*(-1+2*rand(i)),  \quad i=1,2, \ldots, N_x-1,
	\end{equation}
	where the mesh parameter $\beta$ is used to regulate random disturbance within a specific range. The $y$-direction non-uniform grids $\{y_{j+1/2}\}$ $(j=0, \ldots, N_y)$ are defined in a similar manner. A reference grid is shown in Fig. \ref{randonm_grid}.
	\item \emph{Middle refinement grids}: In real simulation, the following specified non-uniform graded grids are also considered:
	\begin{equation}\label{grid:mid}
		\begin{cases}
			x_{N_{x}/2+i+1/2}=\f{a^x + b^x}{2} + \f{b^x - a^x}{2}\left[\frac{i}{\left(N_x / 2+1\right)}\right]^\gamma, & i=0,1, \ldots, N_x / 2+1, \\ 
			x_{N_{x}/2-i+1/2}=\f{a^x + b^x}{2} -\f{b^x - a^x}{2}\left[ \frac{i}{\left(N_x / 2+1\right)}\right]^\gamma, & i=1,2, \ldots, N_x / 2+1,
		\end{cases}
	\end{equation}
	where $\gamma>1$ specifies how much the central region is thickened (larger $\gamma$ means greater thickening). The $y$-direction non-uniform grids $\{y_{j+1/2}\}$ $(j=0, \ldots, N_y)$ are similarly defined. Here, for simplicity, $N_x$ and $N_y$ are chosen as even integers. A reference grid is shown in Fig. \ref{middle_grid}. 
\end{enumerate}
 \begin{figure}[!t]
	\centering
	\subfigure[$\beta = 0.5$, $N_x = N_y = 9$ in \eqref{grid:rand}]{\label{randonm_grid}\includegraphics[width=0.45\linewidth]{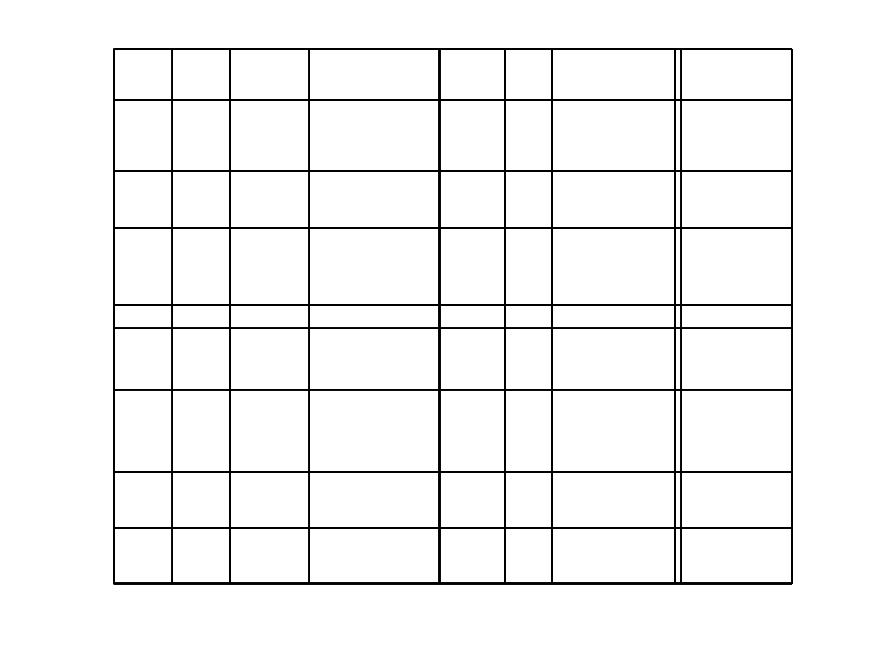}}
	\subfigure[$\gamma=2$, $N_x = N_y = 40$ in \eqref{grid:mid}]{\label{middle_grid}\includegraphics[width=0.45\linewidth]{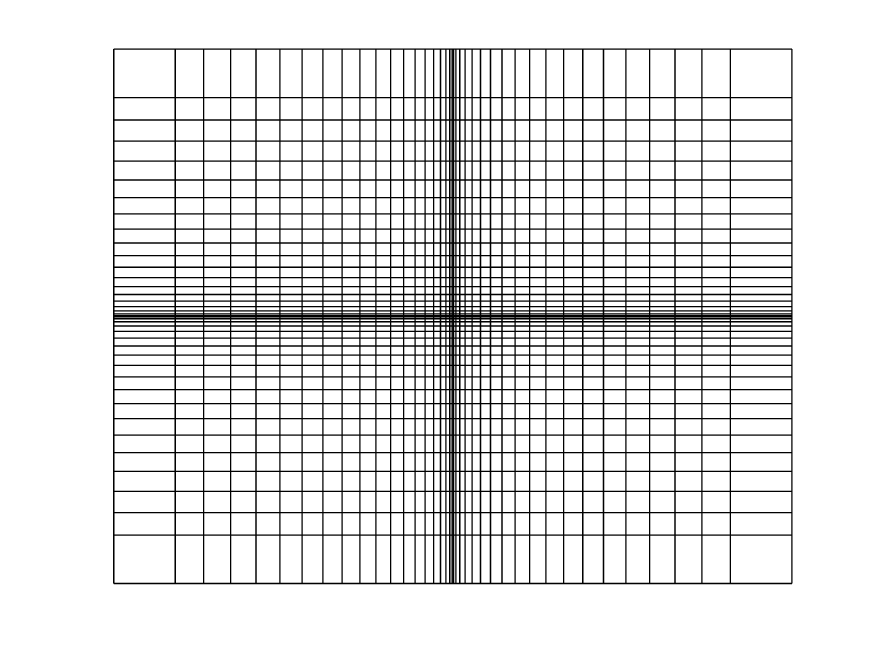}}
	\caption{Schematic non-uniform spatial grids.}
	\label{fig:plotgrid}
\end{figure} 

 In all tests below, we always set $N_x = N_y \equiv M$ for a square domain $\Omega$.
\subsection{Accuracy tests}\label{subsec:accuracy}
In this subsection, we consider the following parabolic-parabolic Keller--Segel system with source terms
\begin{equation}\label{test:model}
    \left\{
    \begin{aligned}
     & \p_t \rho  = \Delta \rho-  \nabla \cdot(\rho \nabla c) + f_\rho(x,y),  &&\quad \text{in}~~  \Omega \times(0, T], \\
     & \p_t c  = \Delta c- c+  \rho  + f_c(x,y),   &&\quad \text{in}~~  \Omega \times(0, T], 
    \end{aligned}
    \right.
\end{equation}
enclosed with homogeneous Neumann boundary conditions.

\begin{example}\label{exa:accuracy1}
	 For the first example, let $\Omega=(0, \pi)^2$ and consider initial conditions 
\[
 \rho^o( x, y)=3 \cos x \cos y+3,\quad	c^o(x, y)=\cos x \cos y+3,
\]
with source terms $f_\rho(x, y)=-3 \cos (2 x) \cos ^2 y-3 \cos ^2 x \cos (2 y)$ and $f_c(x,y) = 0$ in \eqref{test:model}. In this scenario, the exact solutions of \eqref{test:model} are in steady state, i.e, $\rho( x, y, t) \equiv \rho^o( x, y),\	c( x, y, t )\equiv c^o(x, y)$ for any time $t$. 
\end{example}

This example is conducted to test the spatial accuracy on general non-uniform spatial grids \eqref{grid:rand} for the BE-BCFD scheme \eqref{BE-BCFD:rewrite}--\eqref{BE-BCFD:IBc:rewrite}. As the solution is actually time-independent, the time stepsize can be set as $\tau = h_{\rm fix}$, and the corresponding discrete $L^2$-norm errors of $\rho^n-\rho_h^n$ and $c^n-c_h^n$ at $T = 1$ are listed in Table \ref{tab:accuracy1}. We see that for different $\beta$, either uniform ($\beta=0$) or non-uniform ($\beta \neq 0$) grids, the second-order spatial accuracy can both be observed.
\begin{table}[!htbp]
	\centering \caption{ $L^2$ errors for the BE-BCFD scheme for Example \ref{exa:accuracy1}.} \label{tab:accuracy1}
	\setlength{\tabcolsep}{3mm}
	\begin{tabular}{c| c c c c c}
		\toprule
		Mesh parameter & $M$ &  $\|\rho^n-\rho_h^n\|_{\rm M}$&  Order&  $\|c^n-c_h^n\|_{\rm M}$& Order  \\
		\midrule
		\multirow{5}*{\makecell{ $\beta = 0$}}
		&20 & 1.71e-02 & --- & 3.43e-03 &  ---\\
		&40 & 4.48e-03 & 1.93 & 8.69e-04 & 1.98 \\
		&80 & 1.15e-03 & 1.96 & 2.20e-04 & 1.98 \\
		&160 & 2.92e-04 & 1.98 & 5.53e-05 & 1.99 \\
		\midrule
		\multirow{5}*{\makecell{ $\beta = 0.2$}}
		&20 & 1.88e-02 & --- & 4.40e-03 & --- \\
		&40 & 5.44e-03 & 1.79 & 1.12e-03 & 1.98 \\
		&80 & 1.33e-03 & 2.03 & 2.69e-04 & 2.06 \\
		&160 & 3.32e-04 & 2.00 & 6.85e-05 &1.97 \\
		\midrule
		\multirow{5}*{\makecell{ $\beta = 0.5$}}
		&20 & 2.77e-02 & --- & 8.62e-03 & --- \\
		&40 & 9.55e-03 & 1.54 & 2.24e-03 & 1.95 \\
		&80 & 2.17e-03 & 2.14 & 5.08e-04 & 2.14 \\
		&160 & 5.12e-04 & 2.08 & 1.21e-04 &2.06 \\
		\bottomrule
	\end{tabular}
\end{table}

\begin{example}\label{exa:accuracy2} In this example, the computational domain is set to $\Omega=(0,1)^2$. The source terms are chosen such that the exact solutions are 
	\begin{align*}
		\rho(x, y, t)= c(x, y, t)= t (x^2-x )^2 (y^2-y )^2.
	\end{align*}
\end{example}

We numerically test the spatial and temporal accuracy of the PC-BCFD scheme \eqref{PC-BCFD:rewrite}--\eqref{PC-BCFD:IBc:rewrite} by setting the grid sizes $\tau=h_{\rm fix}$, in which the discrete $L^2$-norm errors are measured at $T=1$ and shown in Table \ref{tab:accuracy2}. We can observe that the proposed scheme achieves second-order convergence in both time and space, demonstrated on both uniform and non-uniform spatial grids.
\begin{table}[!tbp]
	\centering 
	\caption{ $L^2$ errors for the PC-BCFD scheme for Example \ref{exa:accuracy2}.} \label{tab:accuracy2}
	\setlength{\tabcolsep}{3mm}
	\begin{tabular}{c| c c c c c}
		\toprule
		Mesh parameter & $M$ &  $\|\rho^n-\rho_h^n\|_{\rm M}$&  Order&  $\|c^n-c_h^n\|_{\rm M}$& Order  \\
		\midrule
		\multirow{5}*{\makecell{ $\beta = 0$}}
		&20 & 8.36e-05 & --- & 8.36e-05 & --- \\
		&40 & 2.09e-05 & 2.00 & 2.09e-05 & 2.00 \\
		&80 & 5.23e-06 & 2.00 & 5.22e-06 & 2.00 \\
		&160 & 1.31e-06 & 2.00 & 1.31e-06 & 2.00 \\
		\midrule
		\multirow{5}*{\makecell{ $\beta = 0.2$}}
		&20 & 8.97e-05 & --- & 8.97e-05 & --- \\
		&40 & 2.29e-05 & 1.97 & 2.29e-05 & 1.97 \\
		&80 & 5.89e-06 & 2.03 & 5.89e-06 & 2.03 \\
		&160 & 1.40e-06 & 1.99 & 1.40e-06 & 1.99 \\
		\midrule
		\multirow{5}*{\makecell{ $\beta = 0.5$}}
		&20 & 1.08e-04 & --- & 1.08e-04 & --- \\
		&40 & 2.86e-05 & 1.91 & 2.86e-05 & 1.91 \\
		&80 & 6.69e-06 & 2.10 & 6.68e-06 & 2.10 \\
		&160 & 1.69e-06 & 1.99 & 1.69e-06 & 1.99 \\
		\bottomrule
	\end{tabular}
\end{table}

 \subsection{Parabolic-elliptic case ($\varepsilon=0$)}\label{example:pe}
In this subsection, the BE-BCFD scheme \eqref{BE-BCFD:rewrite}--\eqref{BE-BCFD:IBc:rewrite} is employed to simulate various dynamic evolution processes for the parabolic-elliptic Keller–Segel system ($\varepsilon=0$). The mass conservation, positivity preservation, and energy dissipation properties of the scheme are verified. Moreover, according to the existence theory for the 2D parabolic-elliptic Keller--Segel system, solutions exist globally if the initial total cell mass is strictly less than the critical threshold $8\pi$; otherwise, the cell density may blow up in a finite time. Such blow-up phenomena are also simulated under specific initial conditions.

\begin{example}\label{exam:less8pi:pe} (Global existence)  In this simulation, we set the initial cell density  
\begin{equation*}
  \rho^o( x, y)=\frac{60}{1+40\left(x^2+y^2\right)}, 
     \quad \text{in} \quad \Omega=(-2,2)^2,
\end{equation*}
 such that the initial total mass of cells is below the critical threshold, i.e., $M_\rho (0) \approx 24.97 < 8\pi$. 
\end{example}
In this test, we set $M = 40$ and $\tau = 1/M$. The evolutions of (i) the maximum and minimum values of both the cell density $\rho_h$ and the chemoattractant concentration $c_h$, (ii) the discrete energy, and (iii) the total mass of $\rho_h$ are illustrated in Figs. \ref{fig:smooth-m40-uni}--\ref{fig:smooth-m40-beta05}, under both uniform ($\beta=0$) and non-uniform ($\beta=0.5$) spatial grids described by \eqref{grid:rand}. Key observations include:
\begin{enumerate}[(i)]
    \item For the chosen positive initial data $\rho^o$, the numerical solutions $\rho_h$ and $c_h$ remain positive up to the final tested time $T=15$. Furthermore, the discrete energy dissipation and mass conservation properties are clearly observed. These numerical observations are fully consistent with the theoretical conclusions established in Theorems \ref{thm:be-mass-cov}, \ref{thm:posi-preserve}, and \ref{thm:energy-dissipasion}.
    
    \item Even on random non-uniform spatial grids, the inherent physical properties—such as positivity preservation, energy dissipation, and mass conservation—remain clearly observable. Furthermore, the solutions are largely consistent with those obtained on uniform spatial grids.
\end{enumerate}

\begin{figure}[!tbp]
	\centering
	\includegraphics[width=0.32\linewidth]{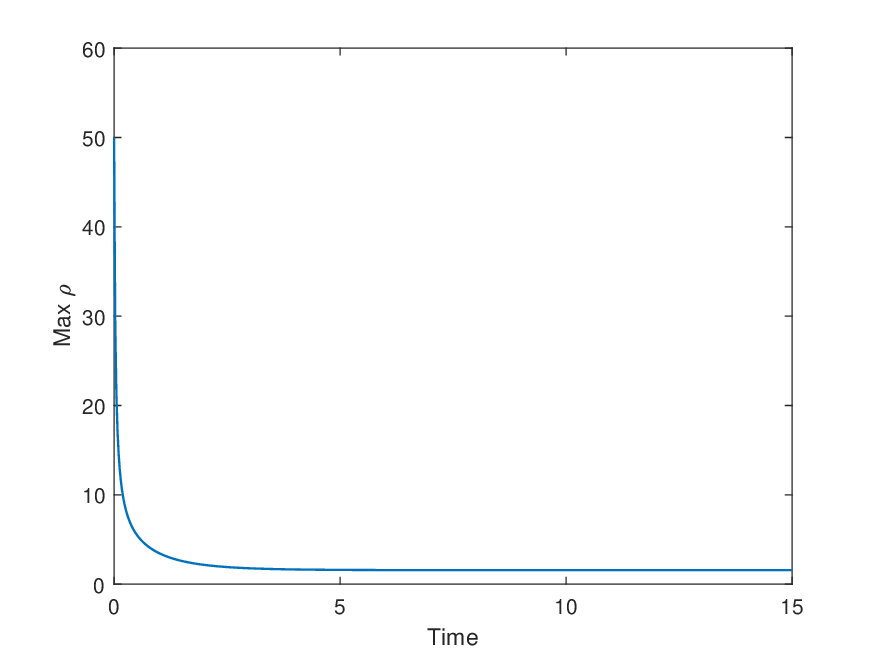}
	\includegraphics[width=0.32\linewidth]{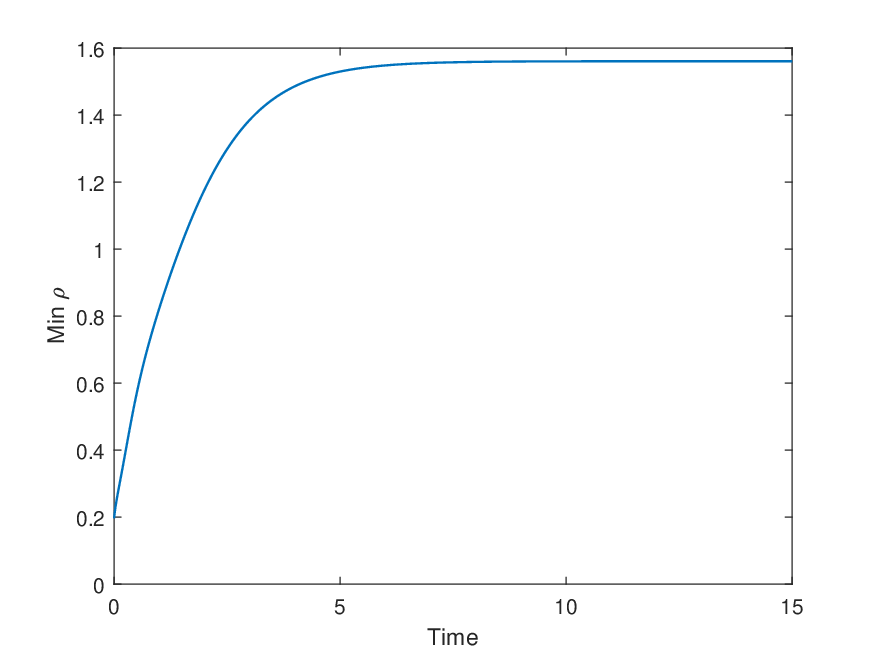}
	\includegraphics[width=0.32\linewidth]{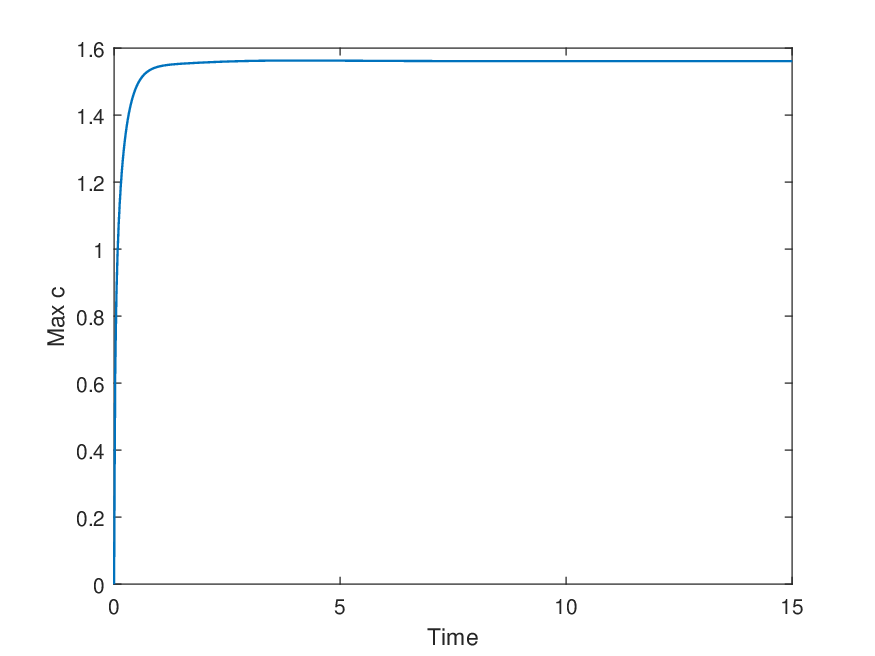}
	\includegraphics[width=0.32\linewidth]{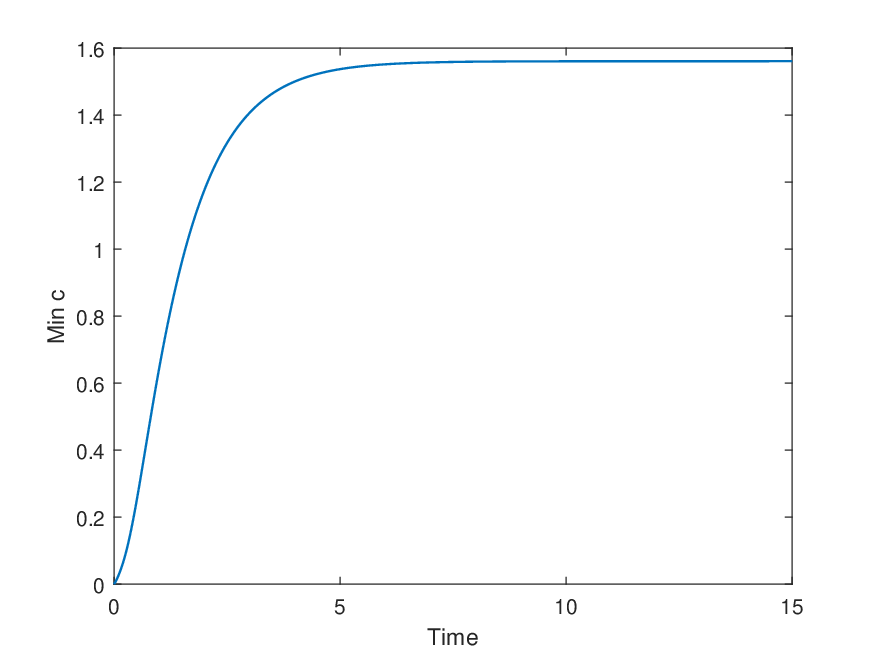}
	\includegraphics[width=0.32\linewidth]{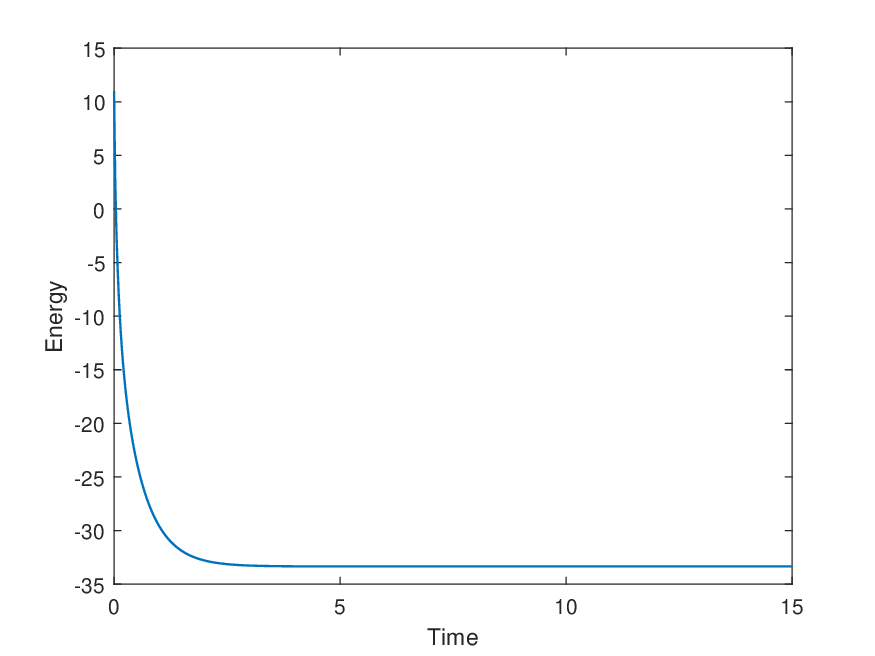}
	\includegraphics[width=0.32\linewidth]{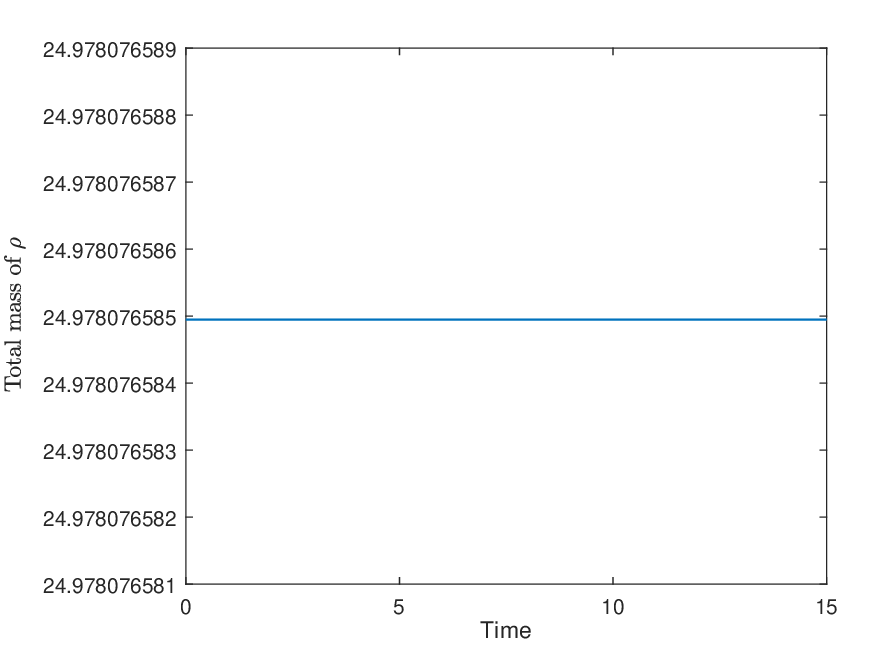}
	\caption{Evolutions of cell density, chemoattractant concentration, discrete energy and total cell mass for the BE-BCFD scheme on uniform  grids for Example \ref{exam:less8pi:pe}. }
	\label{fig:smooth-m40-uni}
\end{figure}
\begin{figure}[!tbp]
\vspace{-10pt}
	\centering
	\includegraphics[width=0.32\linewidth]{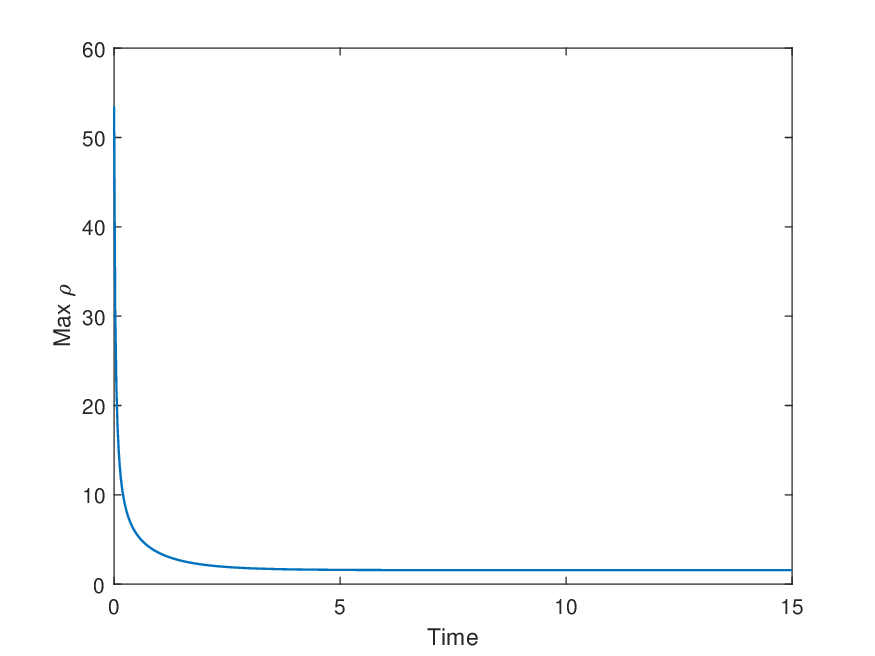}
	\includegraphics[width=0.32\linewidth]{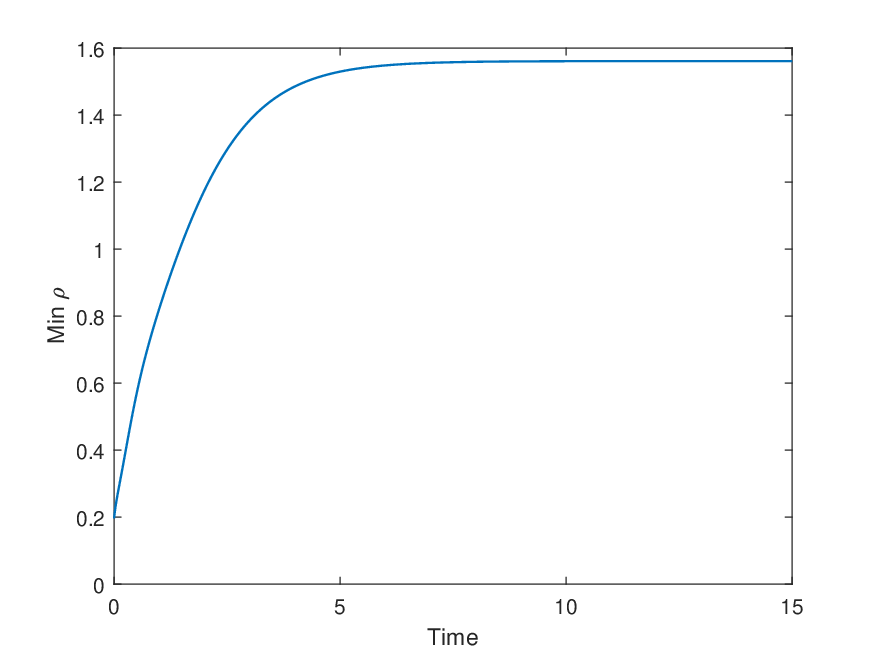}
	\includegraphics[width=0.32\linewidth]{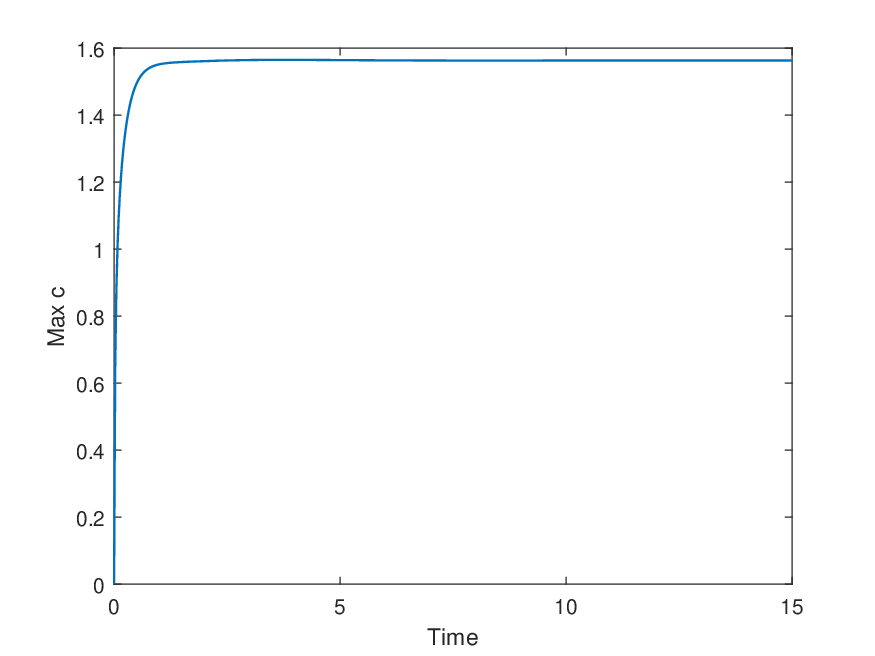}
	\includegraphics[width=0.32\linewidth]{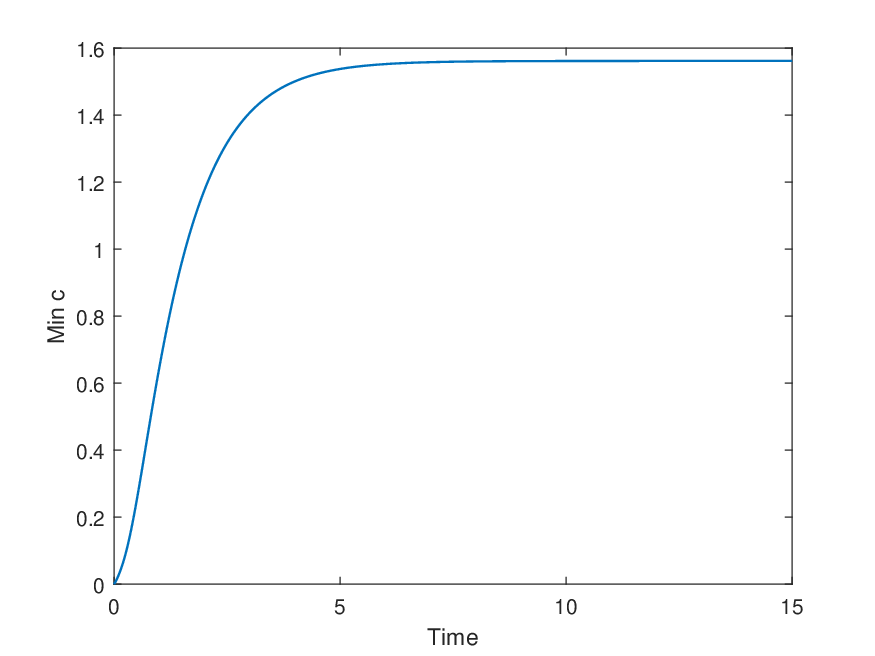}
	\includegraphics[width=0.32\linewidth]{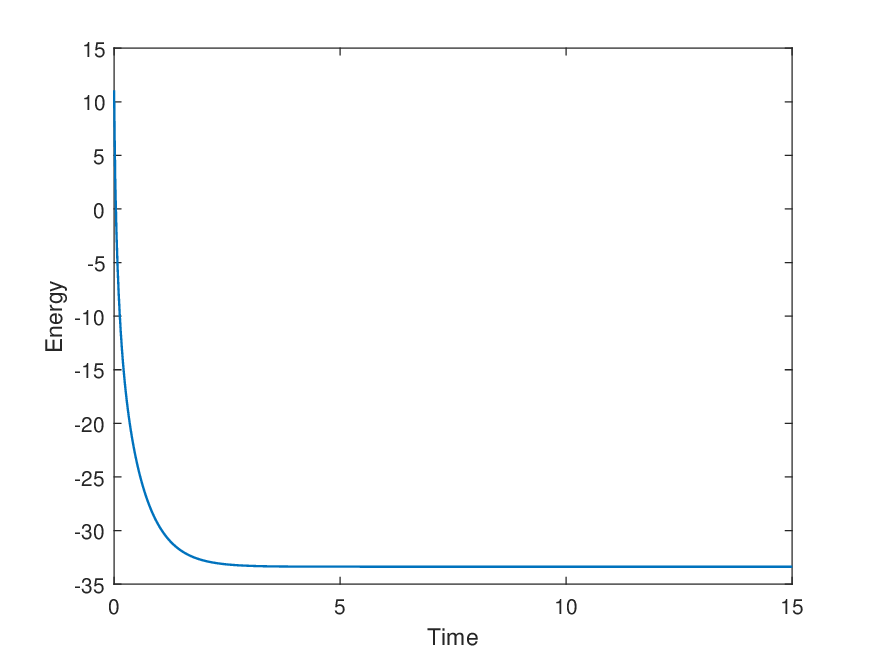}
	\includegraphics[width=0.32\linewidth]{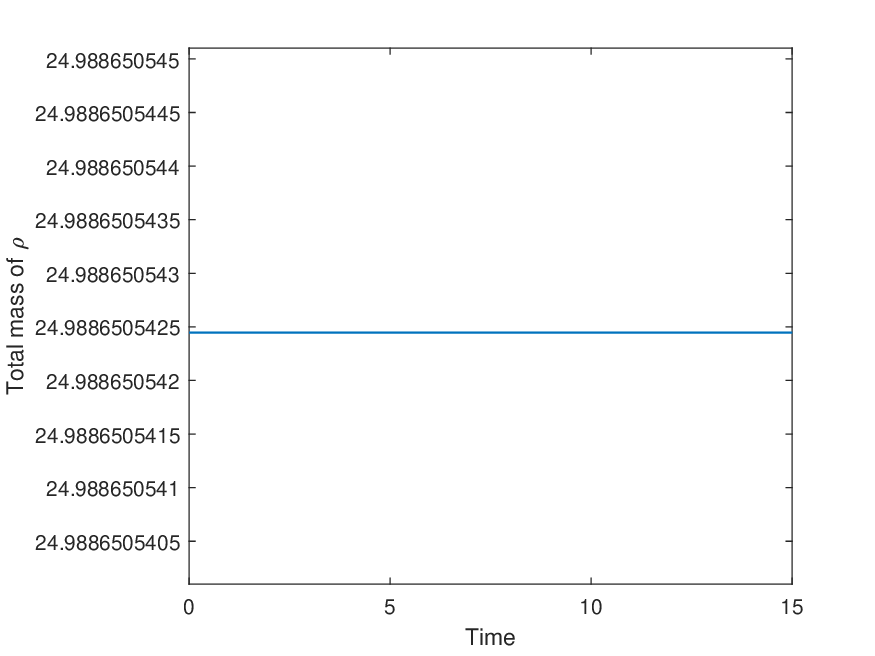}
	\caption{Evolutions of cell density, chemoattractant concentration, discrete energy and total cell mass for the BE-BCFD scheme on non-uniform grids with $\beta=0.5$ for Example \ref{exam:less8pi:pe}.}
	\label{fig:smooth-m40-beta05}
\end{figure}

\begin{figure}[!t]
	\centering
	\includegraphics[width=0.32\linewidth]{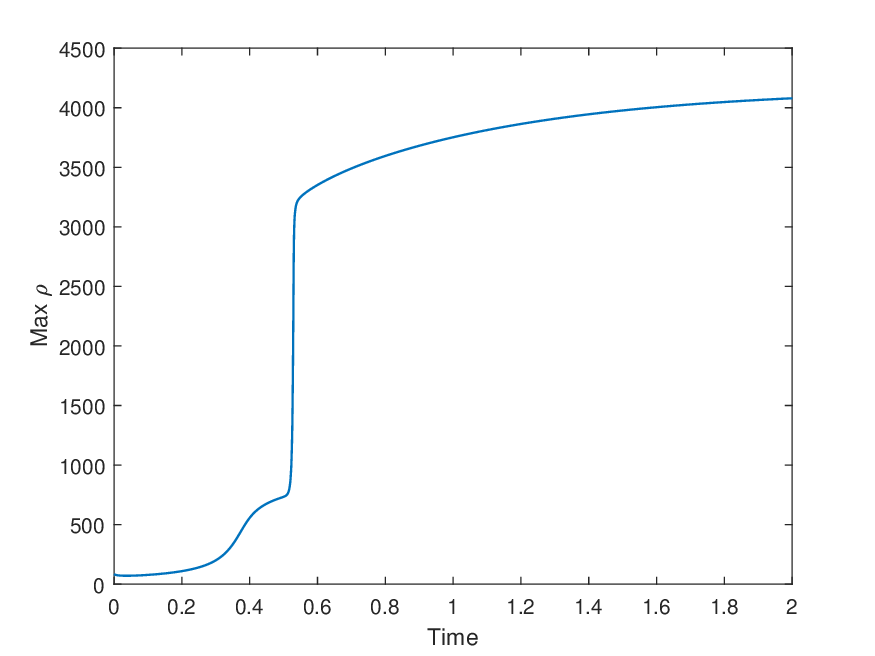}
	\includegraphics[width=0.32\linewidth]{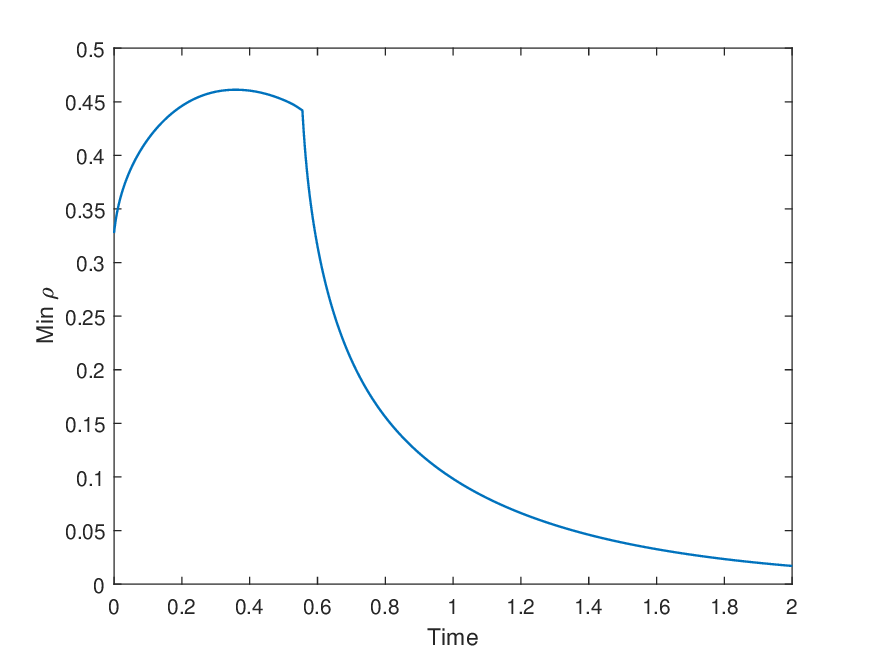}
	\includegraphics[width=0.32\linewidth]{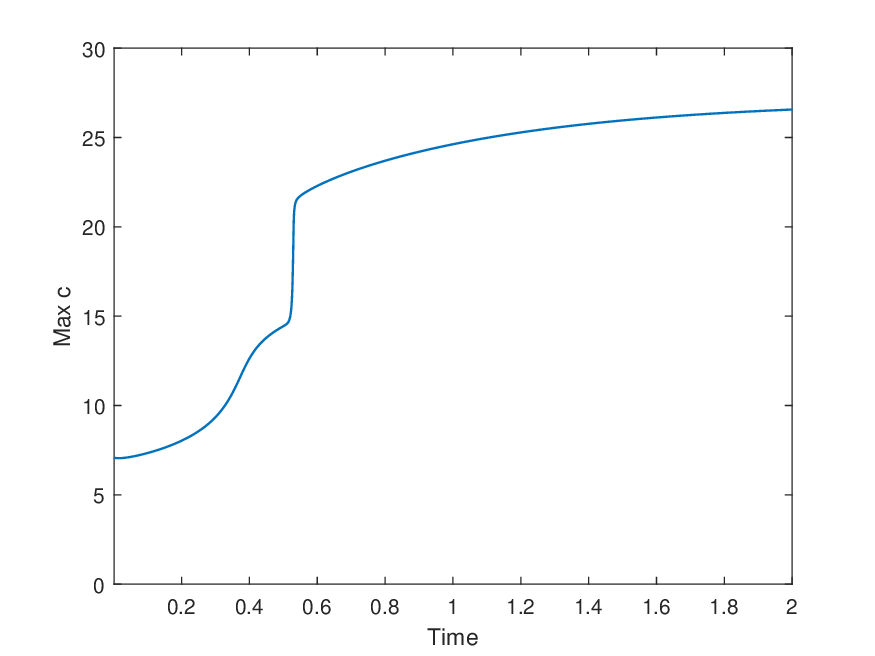}
	\includegraphics[width=0.32\linewidth]{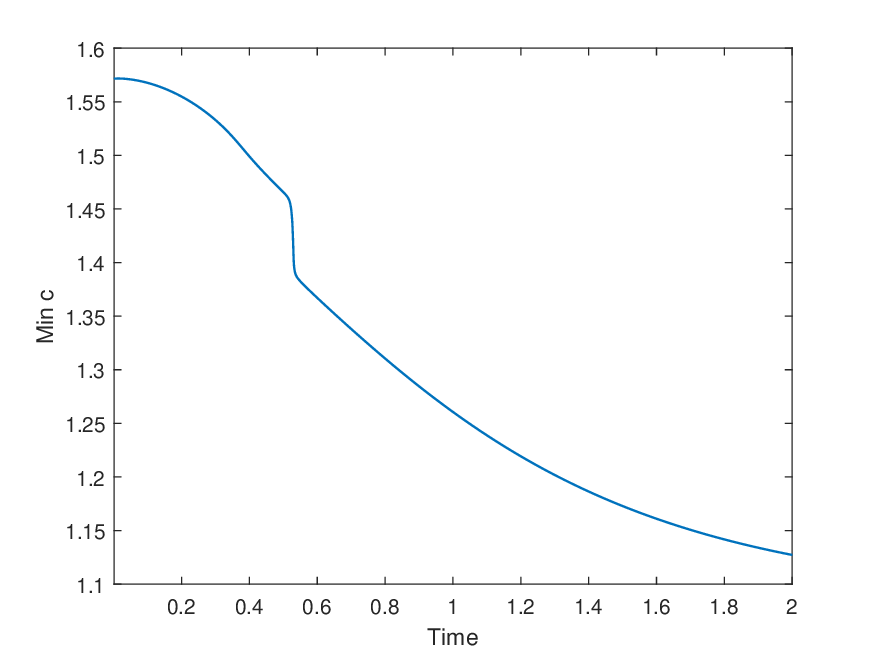}
	\includegraphics[width=0.32\linewidth]{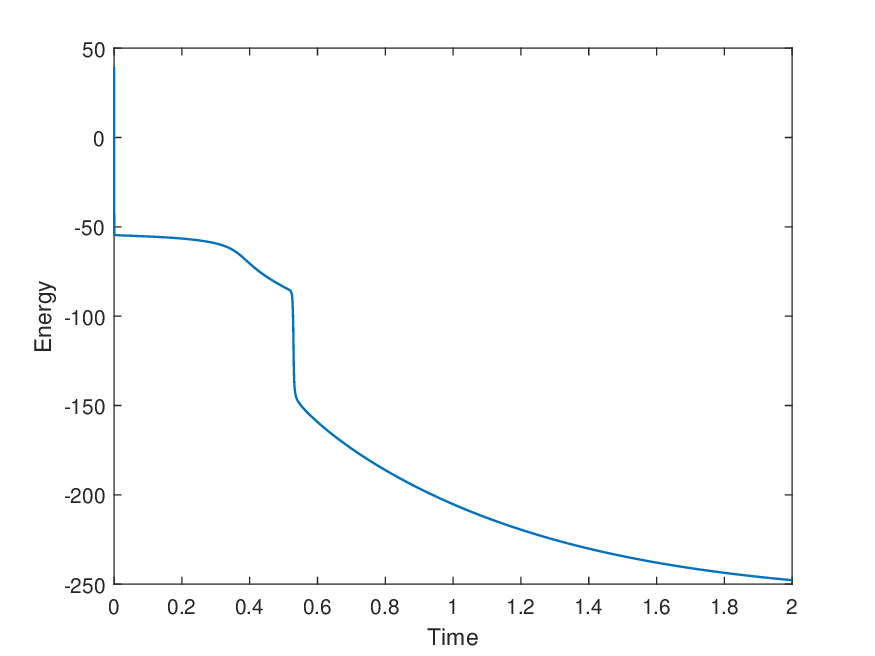}
	\includegraphics[width=0.32\linewidth]{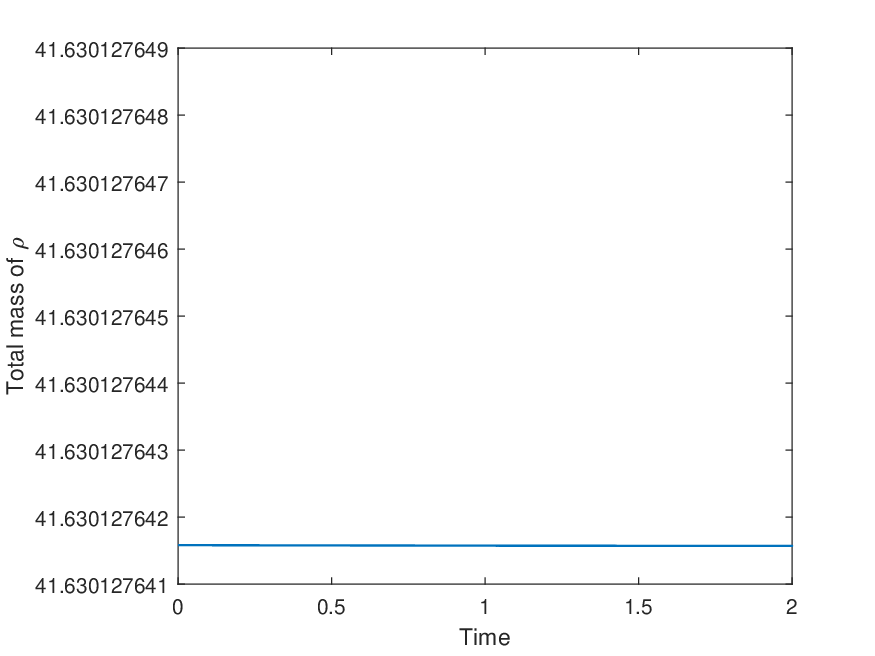}
	\caption{Evolutions of cell density, chemoattractant concentration, discrete energy and total cell mass for the BE-BCFD scheme on uniform grids with $M=40$ for Example \ref{exam:grate8pi:pe}. }
	\label{fig:PE-rho100M40-uni}
\end{figure}
\begin{figure}[!t]
	\vspace{-10pt}
	\centering
	\includegraphics[width=0.32\linewidth]{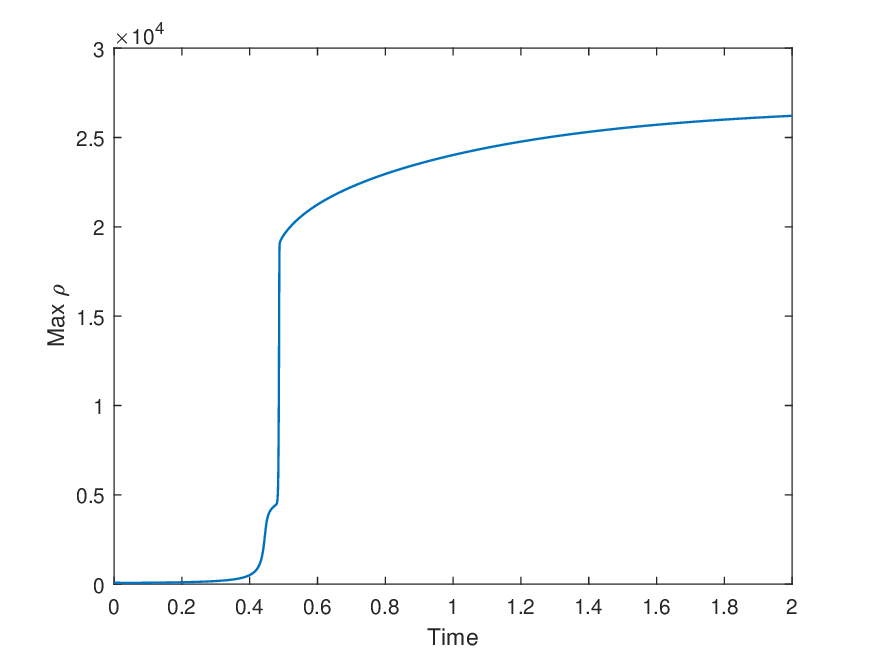}
	\includegraphics[width=0.32\linewidth]{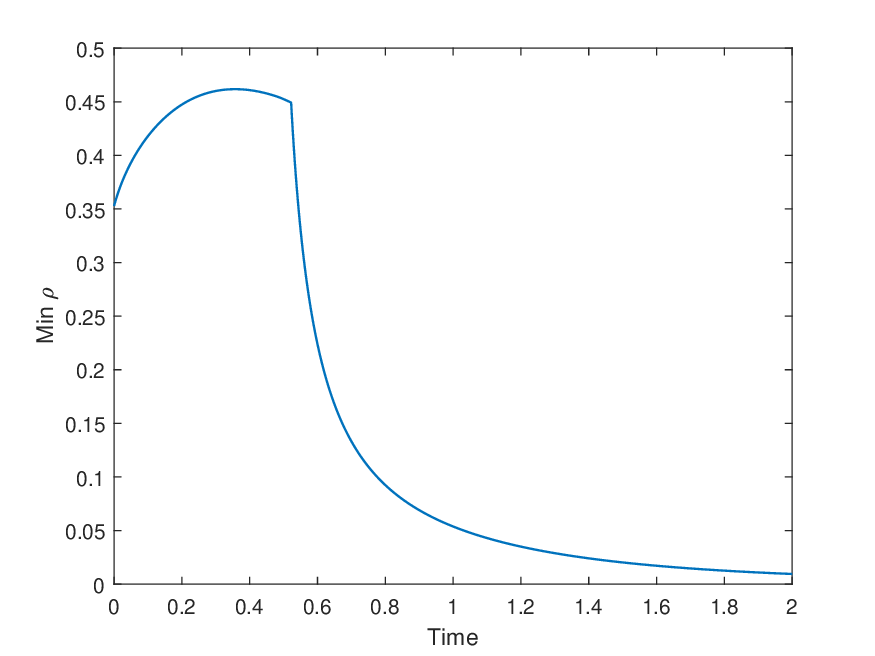}
	\includegraphics[width=0.32\linewidth]{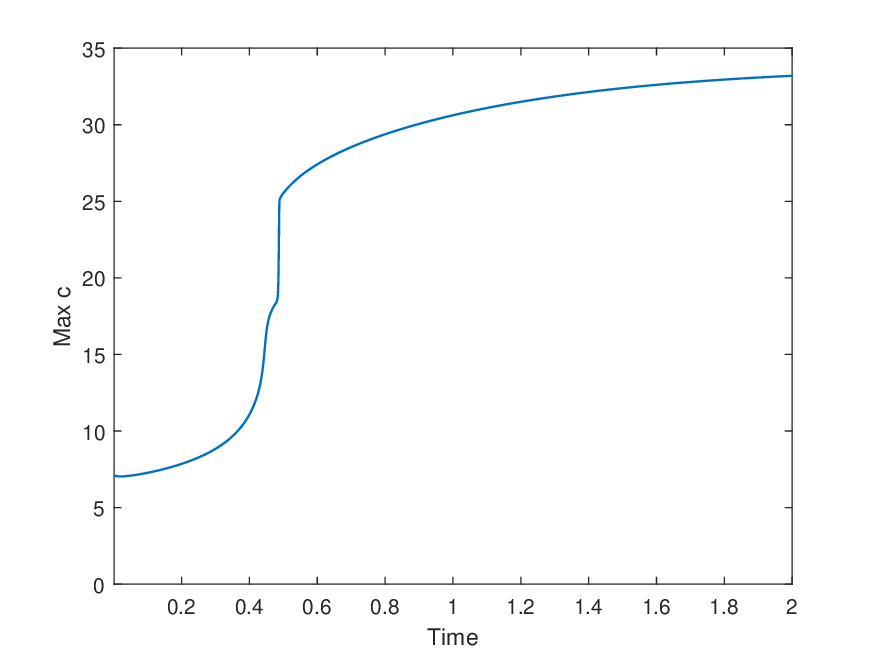}
	\includegraphics[width=0.32\linewidth]{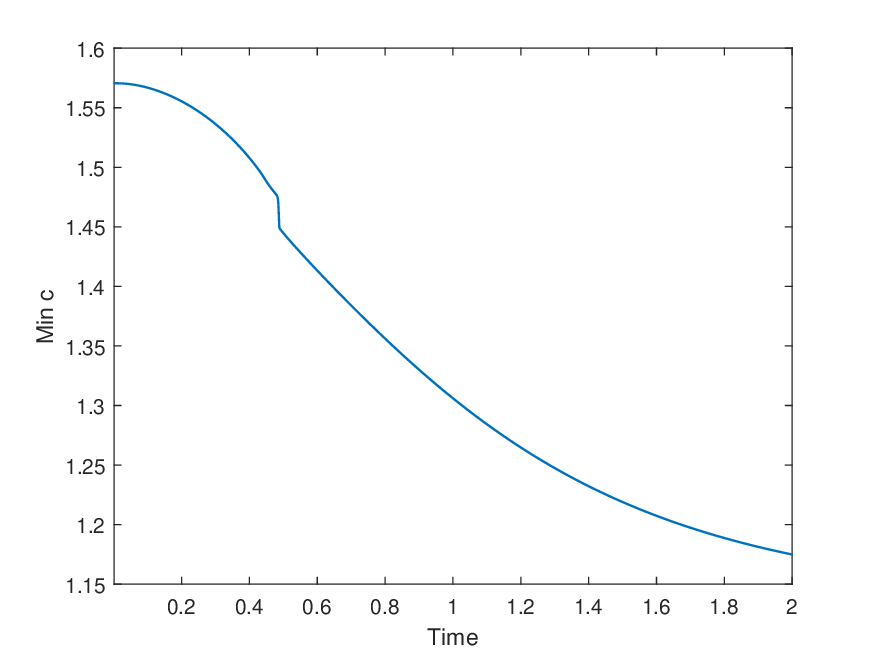}
	\includegraphics[width=0.32\linewidth]{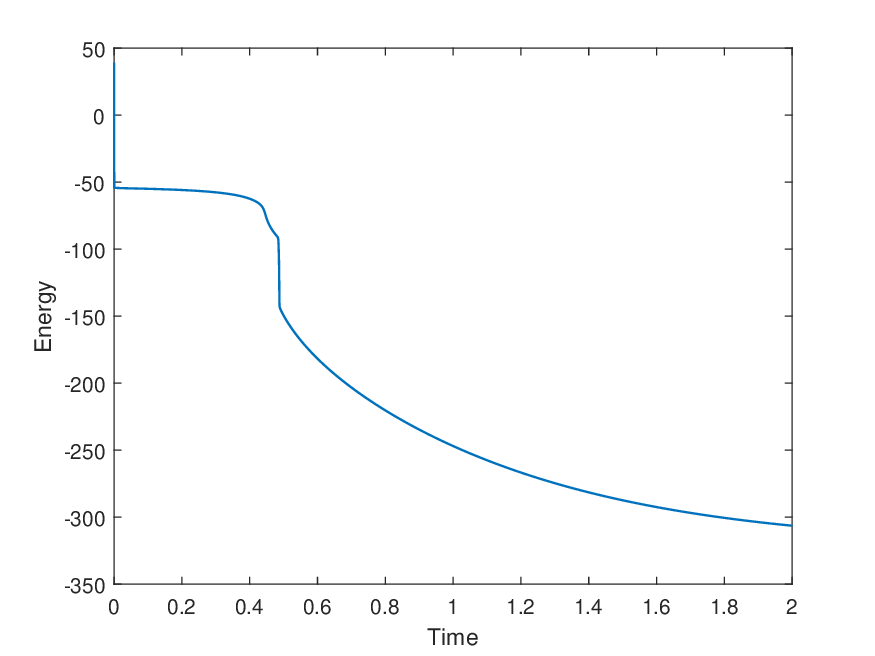}
	\includegraphics[width=0.32\linewidth]{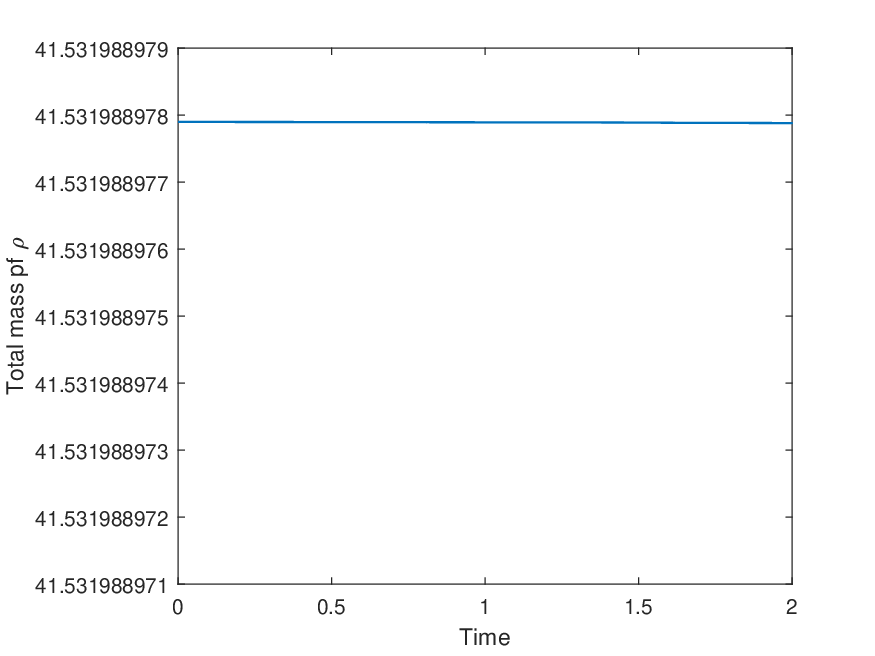}
	\caption{Evolutions of cell density, chemoattractant concentration, discrete energy and total cell mass for the BE-BCFD scheme on non-uniform grids with $M=40$ and $\gamma=1.29$ for Example \ref{exam:grate8pi:pe}. }
	\label{fig:PE-rho100M40-mid13}
\end{figure}
\begin{figure}[!t]
	\centering
	\includegraphics[width=0.32\linewidth]{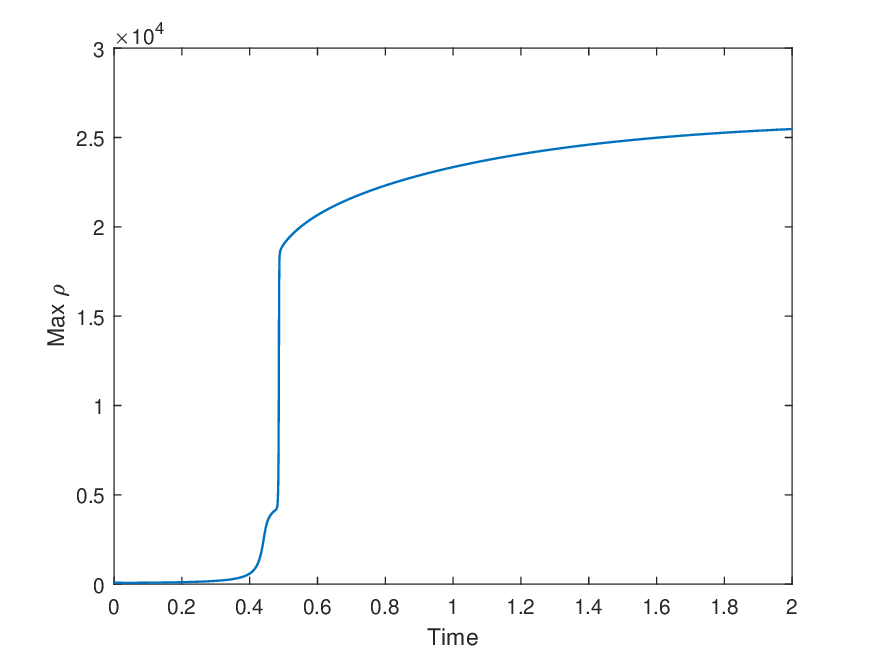}
	\includegraphics[width=0.32\linewidth]{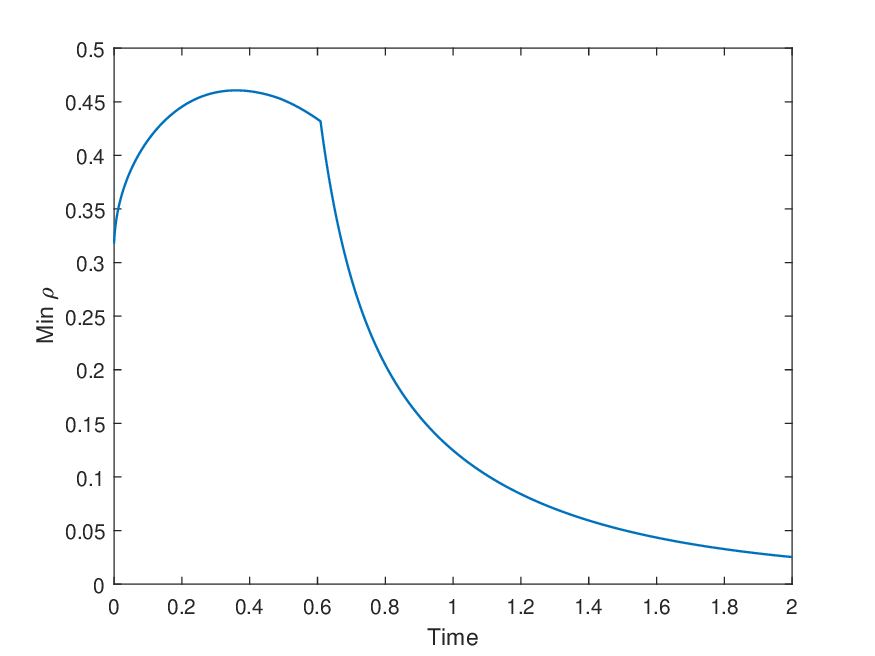}
	\includegraphics[width=0.32\linewidth]{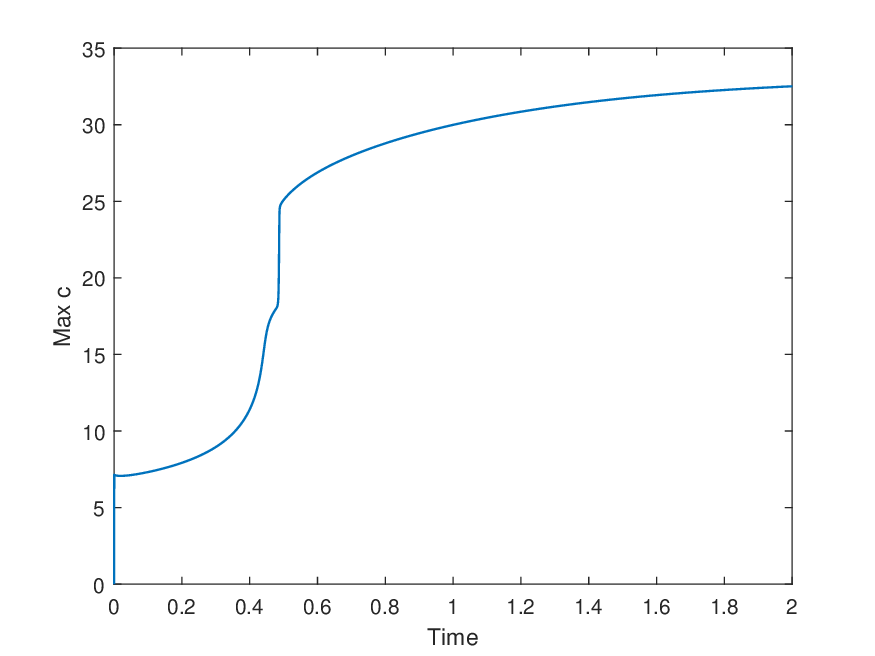}
	\includegraphics[width=0.32\linewidth]{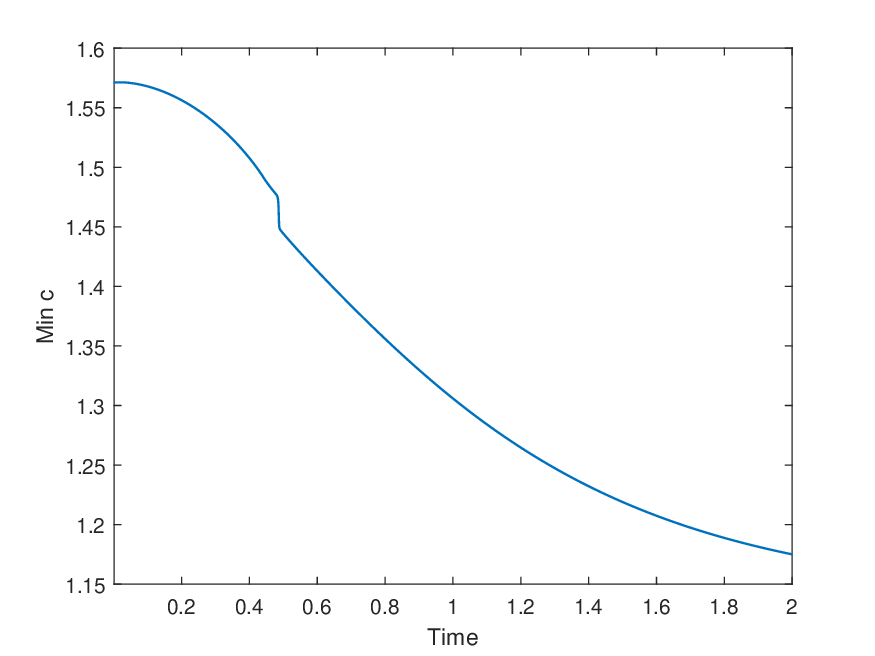}
	\includegraphics[width=0.32\linewidth]{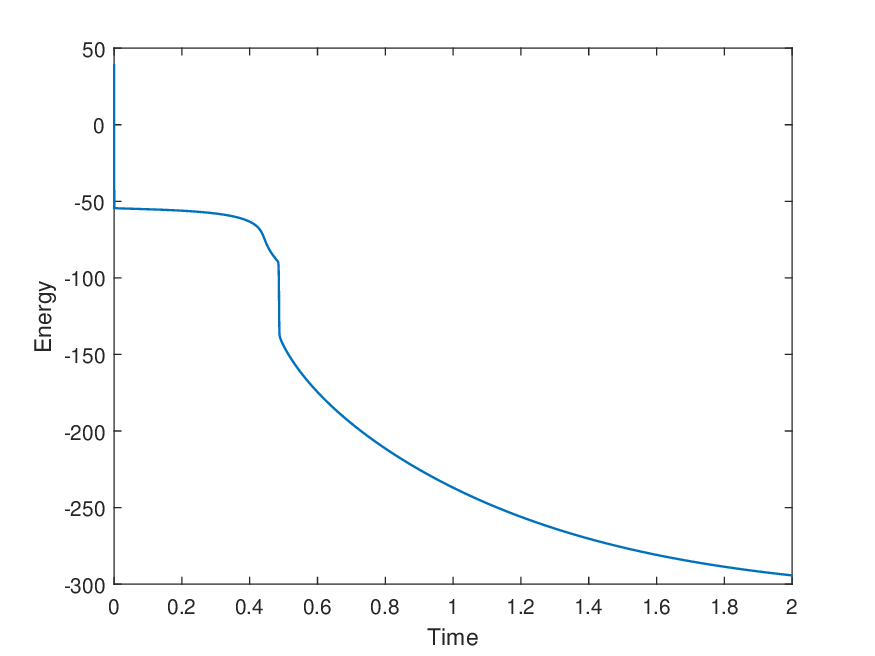}
	\includegraphics[width=0.32\linewidth]{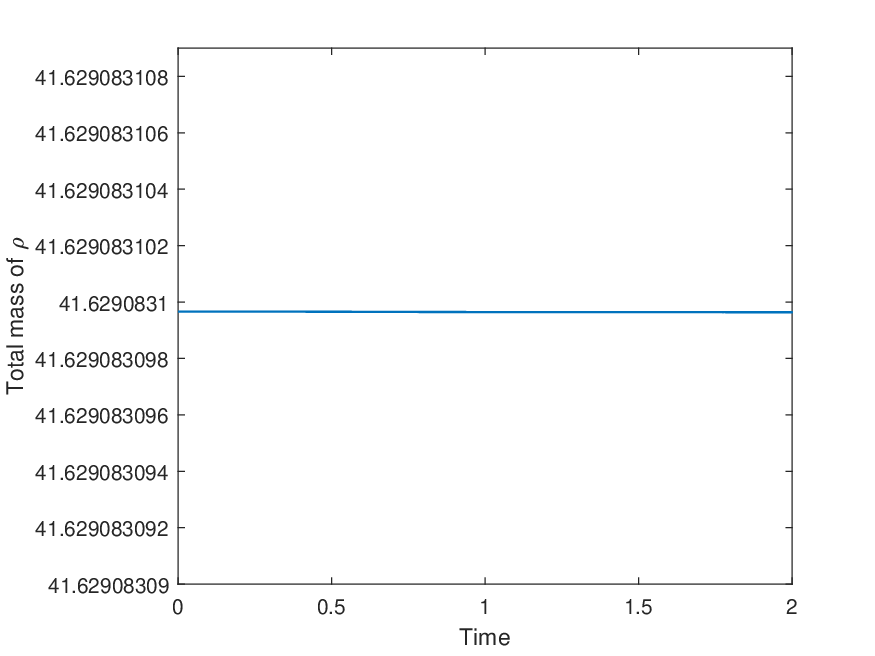}
	\caption{Evolutions of cell density, chemoattractant concentration, discrete energy and total cell mass for the BE-BCFD scheme on uniform grids with $M=100$ for Example \ref{exam:grate8pi:pe}. }
	\label{fig:PE-rho100M100-uni}
\end{figure}

\begin{example}\label{exam:grate8pi:pe}(Blow-up)  In order to explore the chemotactic blow-up for the parabolic-elliptic Keller–Segel system, we increase the initial total mass of cell density to $M_\rho (0) \approx 41.62>8 \pi$, by changing the initial cell density of Example \ref{exam:less8pi:pe} to
 \begin{equation*}
  \rho^o( x, y)=\frac{100}{1+40\left(x^2+y^2\right)}, \quad \text{in} \quad \Omega=(-2,2)^2.
\end{equation*}
\end{example}

As the cell density is expected to blow up within a short finite time, we set $\tau=5\times 10^{-4}$ small enough. Evolutions of the cell density $\rho_h$, the chemoattractant concentration $c_h$, the discrete energy, and the total mass of $\rho_h$ are depicted in Figs. \ref{fig:PE-rho100M40-uni}--\ref{fig:PE-rho100M100-uni} for three different spatial grid configurations: uniform grids with $M = 40$, non-uniform grids with $M = 40$ and $\gamma = 1.29$, and uniform grids with $M = 100$. Additionally, Fig. \ref{fig:PE-rho100M40-uni-solution} presents contour plots of $\rho_h$ on various spatial grids at time instants $t = 0.1,\ 0.8,\ 2$ for the BE-BCFD scheme. Our observations indicate that:
\begin{enumerate}[(i)]
    \item Although blow‑up occurs in this scenario, the BE‑BCFD scheme still effectively reproduces the physical properties of positivity preservation, energy dissipation, and mass conservation established in the theoretical analysis, across all three distinct types of spatial grids. 
    
    \item  As can be seen, both the coarser non-uniform grid ($M=40$, $\gamma=1.29$) and the finer uniform grid ($M=100$) produce nearly the same blow-up phenomenon, characterized by significant gradient variations and a maximum value of cell density exceeding $2.5\times10^4$. In contrast, on the coarser uniform grid ($M=40$), although notable gradient variations are also observed, the maximum value of $\rho_h$ only reaches about $4000$. Moreover, the blow-up time is accurately captured by the coarser non-uniform grid and the finer uniform grid. This demonstrates that the non-uniform grid BCFD method achieves results comparable to those obtained on the fine uniform grid and performs significantly better than the coarser uniform grid approach.
\end{enumerate}
In summary, these findings highlight the efficiency and accuracy of the non-uniform grid BCFD method in simulating blow-up phenomena.

\begin{figure}[!bt]
  \vspace{-12pt}
  \centering
    \includegraphics[width=0.32\linewidth]{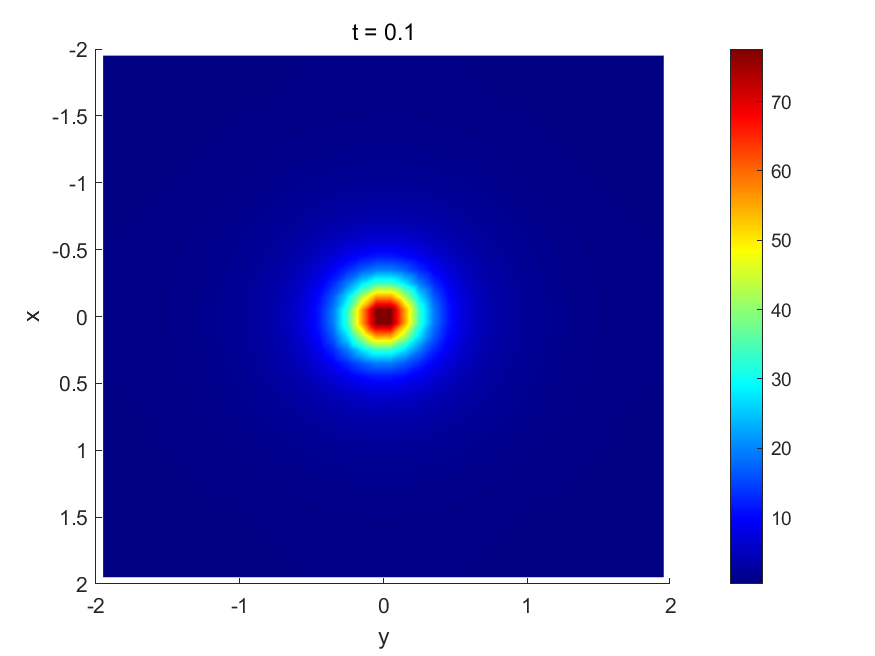}   
    \includegraphics[width=0.32\linewidth]{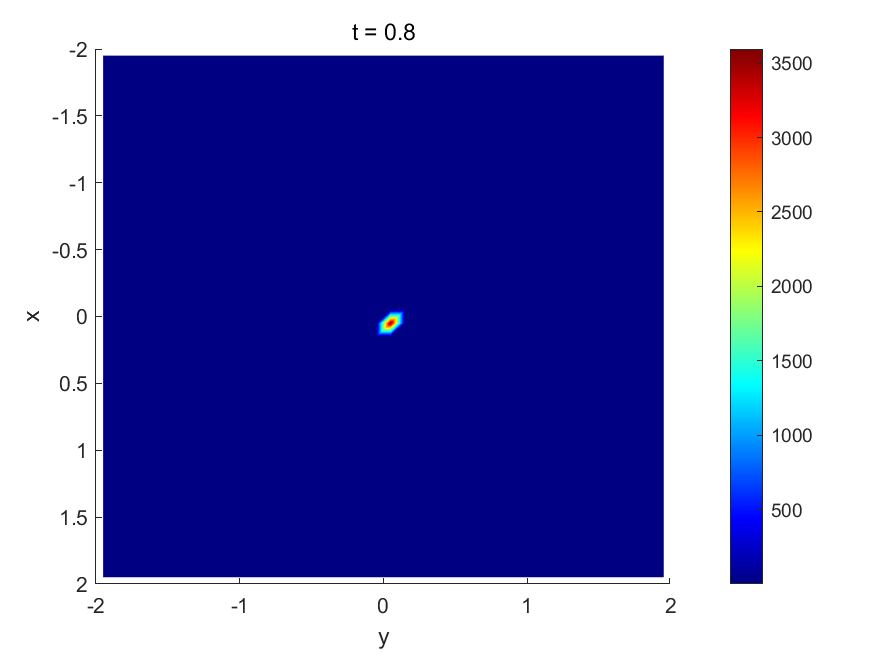}
    \includegraphics[width=0.32\linewidth]{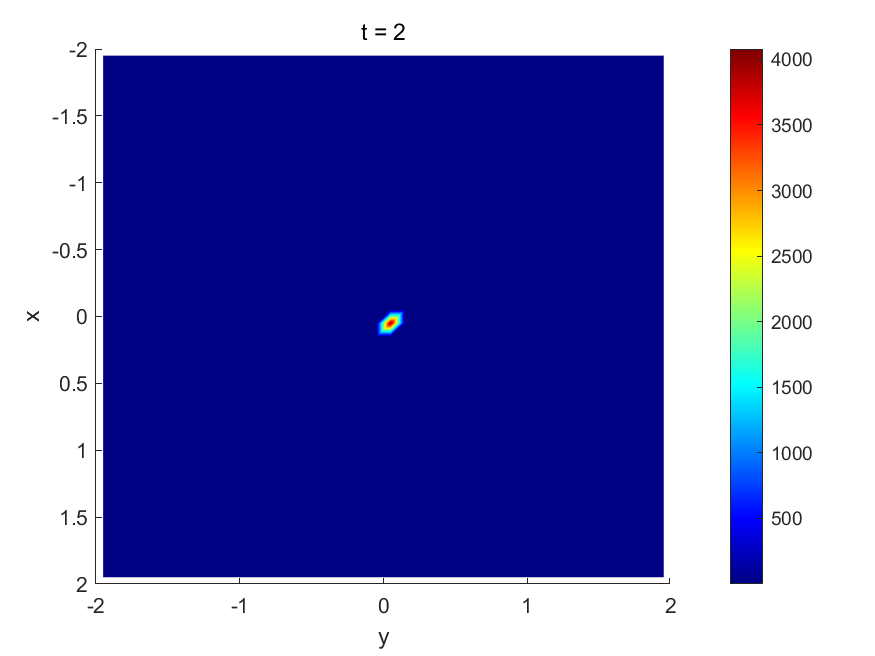}
    
	\includegraphics[width=0.32\linewidth]{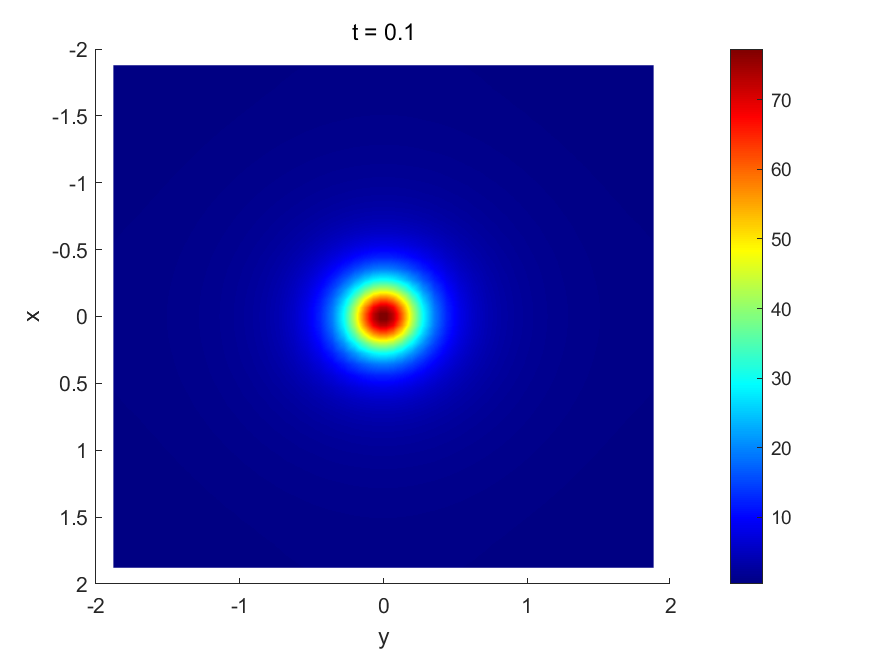}
     \includegraphics[width=0.32\linewidth]{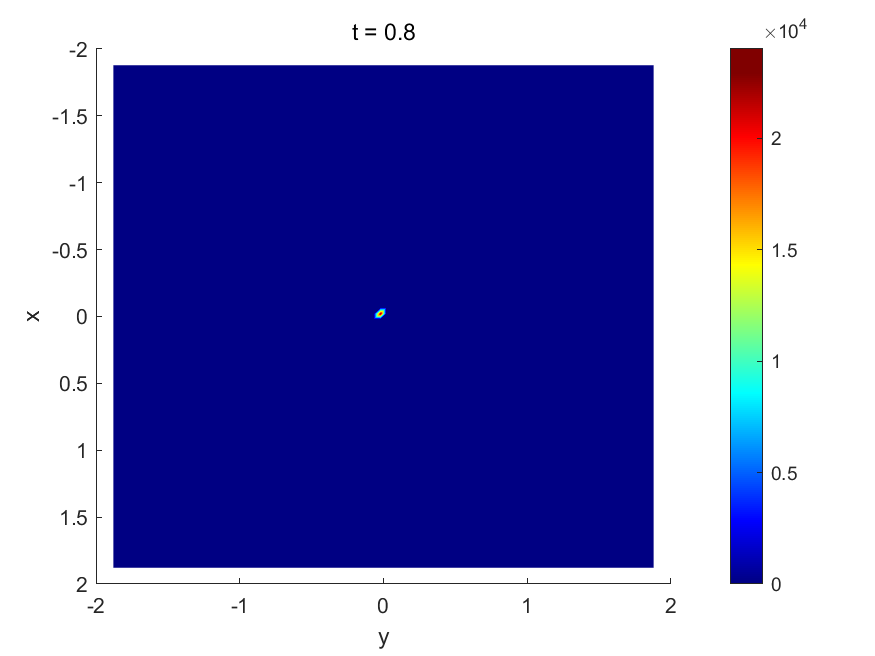}
     \includegraphics[width=0.32\linewidth]{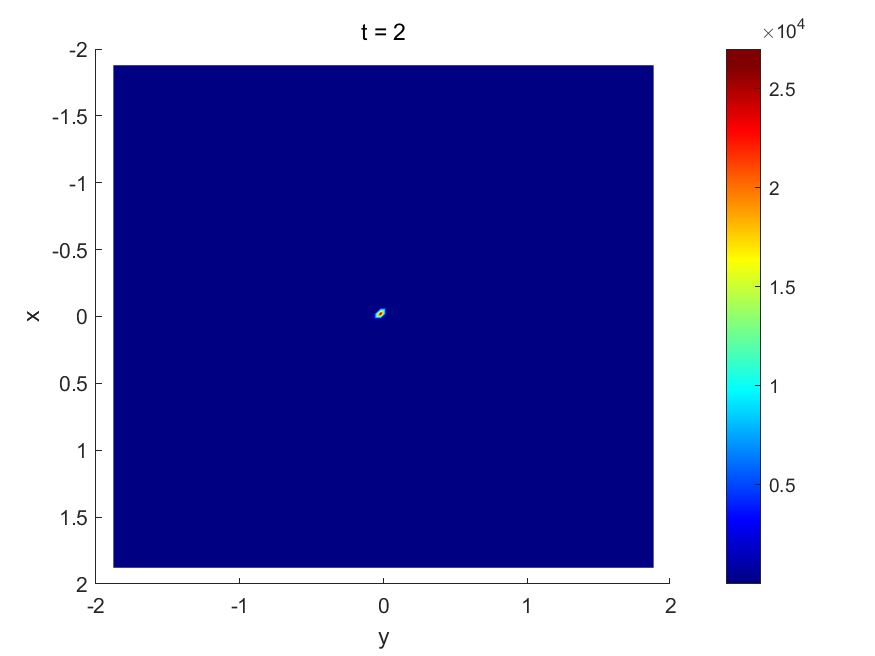}
     
     \includegraphics[width=0.32\linewidth]{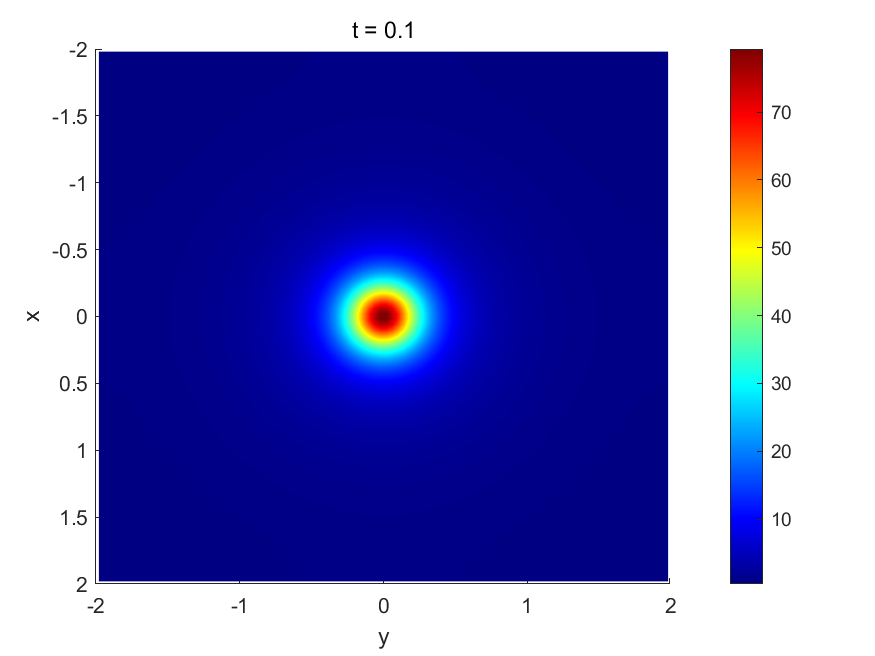}
     \includegraphics[width=0.32\linewidth]{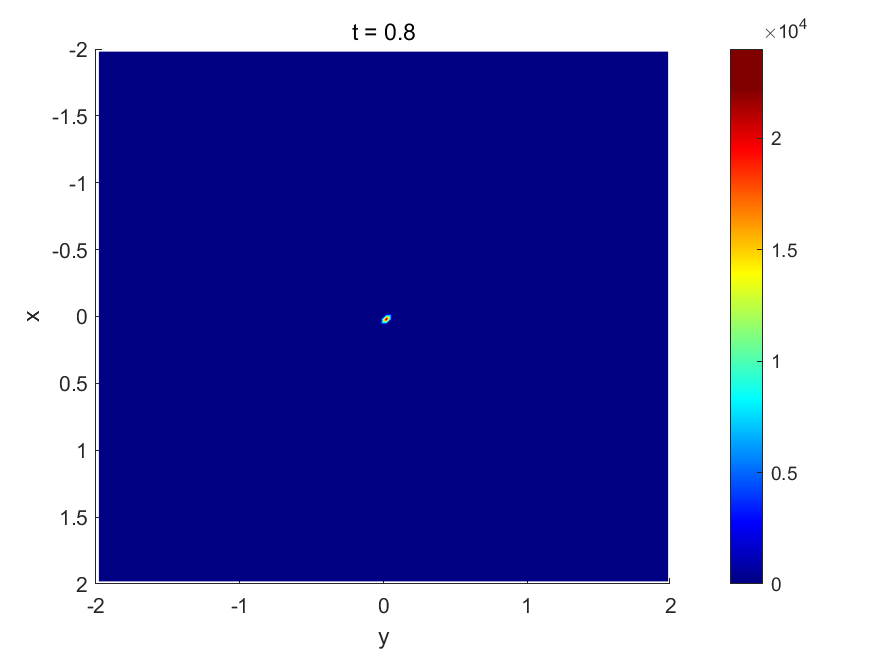}
     \includegraphics[width=0.32\linewidth]{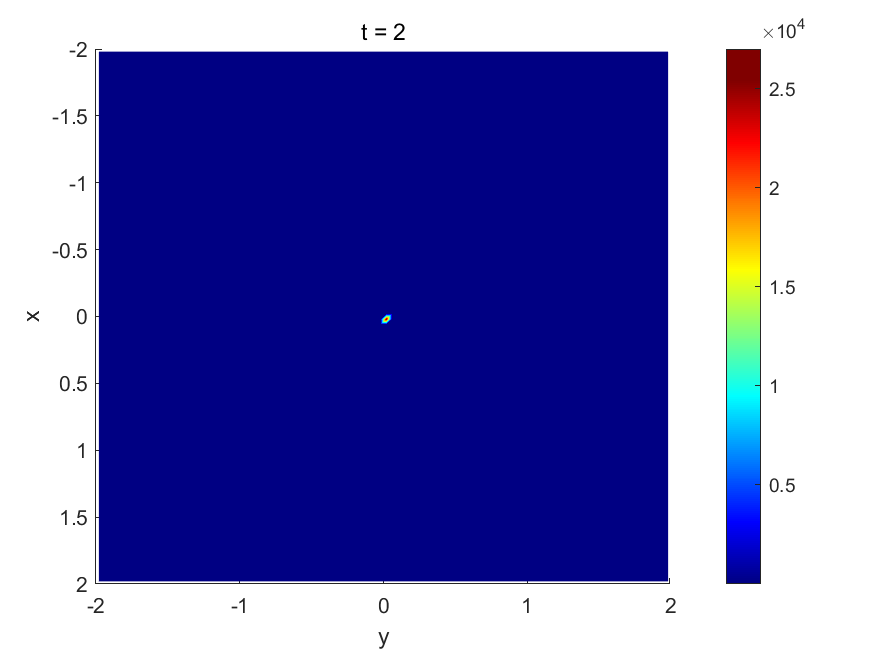}
	\caption{Contour plots of $\rho_h$ on uniform grids with $M=40$ (top), non-uniform grids with $M=40$ and $\gamma=1.29$ (middle), and uniform grids with $M=100$ (bottom) at $t = 0.1,\ 0.8,\ 2$ for the BE-BCFD scheme for Example \ref{exam:grate8pi:pe}.}
	\label{fig:PE-rho100M40-uni-solution}
\end{figure}

\subsection{Parabolic-parabolic case ($\varepsilon =1$)}\label{example:pp}
In this subsection, we utilize both the BE-BCFD scheme \eqref{BE-BCFD:rewrite}--\eqref{BE-BCFD:IBc:rewrite} and the PC-BCFD scheme \eqref{PC-BCFD:rewrite}--\eqref{PC-BCFD:IBc:rewrite} to simulate various dynamic processes for the parabolic-parabolic Keller-Segel system ($\varepsilon=1$). Throughout these simulations, we also validate key physical properties such as mass conservation, positivity preservation, and energy dissipation for both schemes. Moreover, as in the parabolic-elliptic Keller--Segel system, the same critical threshold $M_{\rho}(0)=8\pi$ governs the global existence versus blow-up for the parabolic-parabolic Keller–Segel system.

\begin{example}\label{exam:less8pi:pp} (Global existence) For this example, we set the initial cell density and chemoattractant concentration as follows:
\begin{equation*}
		\rho^o(x, y)=11 \exp \left(-(x^2+y^2)\right),\quad
		c^o(x, y)=5 \exp \left(-(x^2+y^2)/2\right) ,
\end{equation*}
where the initial total mass of cells $M_\rho (0) \approx 24.54 < 8\pi$ in $\Omega=(-1,1)^2$. 
\end{example}

We take the parameters as $M = 40$ and $\tau = 1/M$. Figs. \ref{fig:PP-smooth-M40-uni}--\ref{fig:PP-smooth-m40-beta04} show the evolutions of the cell density $\rho_h$, the chemoattractant concentration $c_h$, the discrete energy, and the total mass of $\rho_h$ on both uniform spatial grids and non-uniform spatial grids with $\beta=0.4$. We notice that the BE-BCFD scheme \eqref{BE-BCFD:rewrite}--\eqref{BE-BCFD:IBc:rewrite} successfully preserves the essential properties of positivity preservation, energy dissipation, and mass conservation when applied to the parabolic-parabolic Keller–Segel system. Additionally, the random perturbations of the uniform spatial grids do not adversely affect the simulation results.
\begin{figure}[!t]
	\centering
	\includegraphics[width=0.32\linewidth]{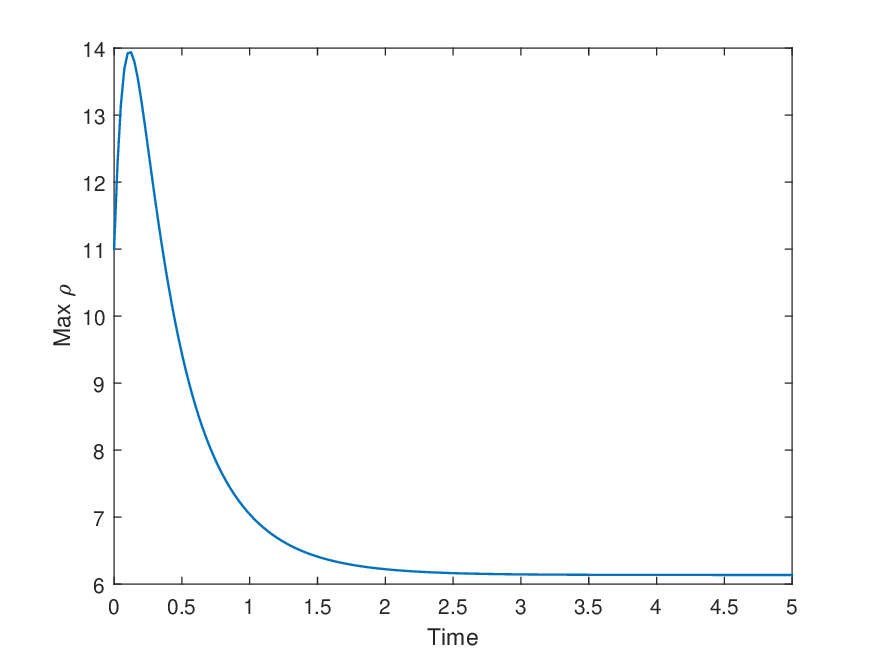}
	\includegraphics[width=0.32\linewidth]{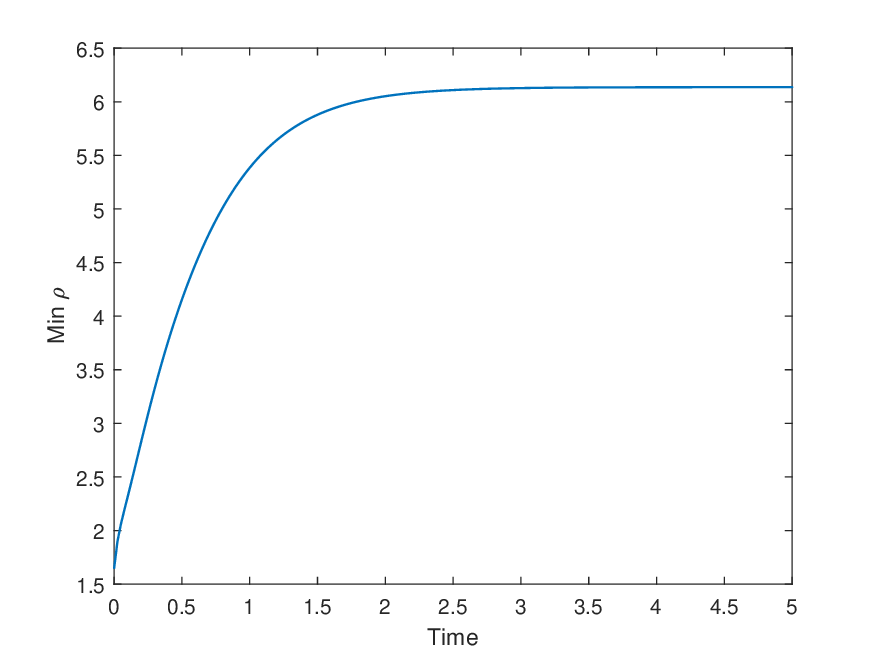}
	\includegraphics[width=0.32\linewidth]{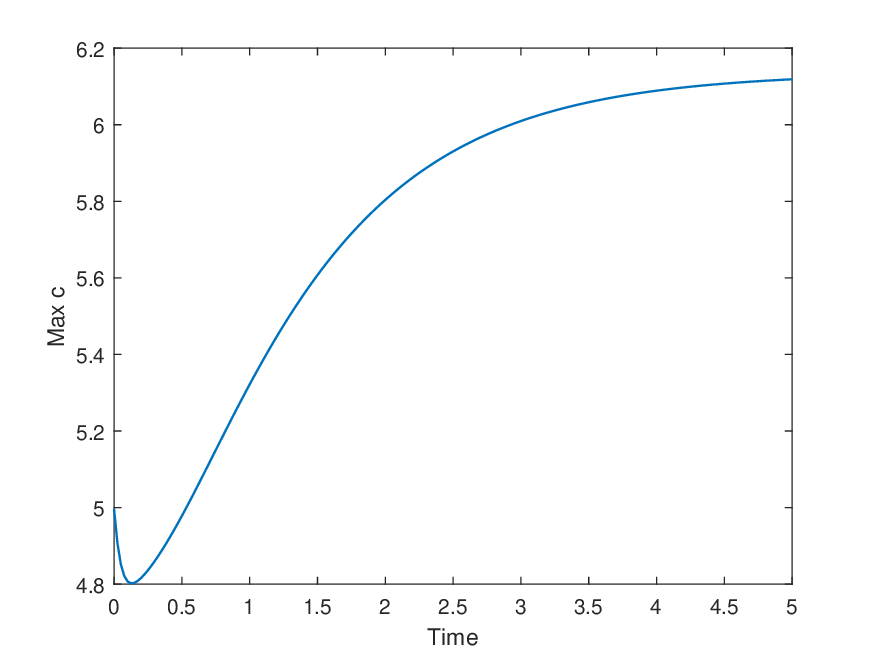}
	\includegraphics[width=0.32\linewidth]{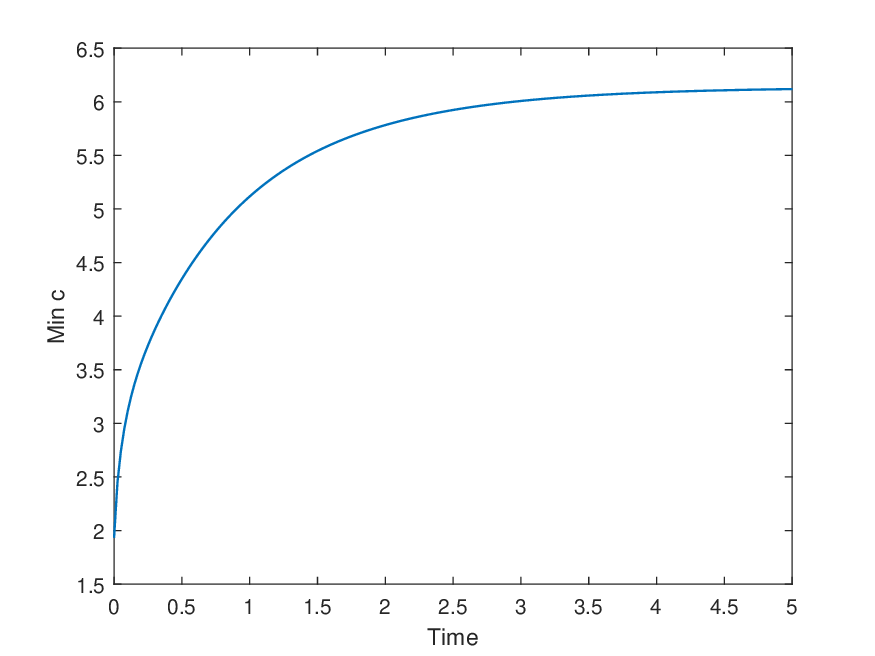}
	\includegraphics[width=0.32\linewidth]{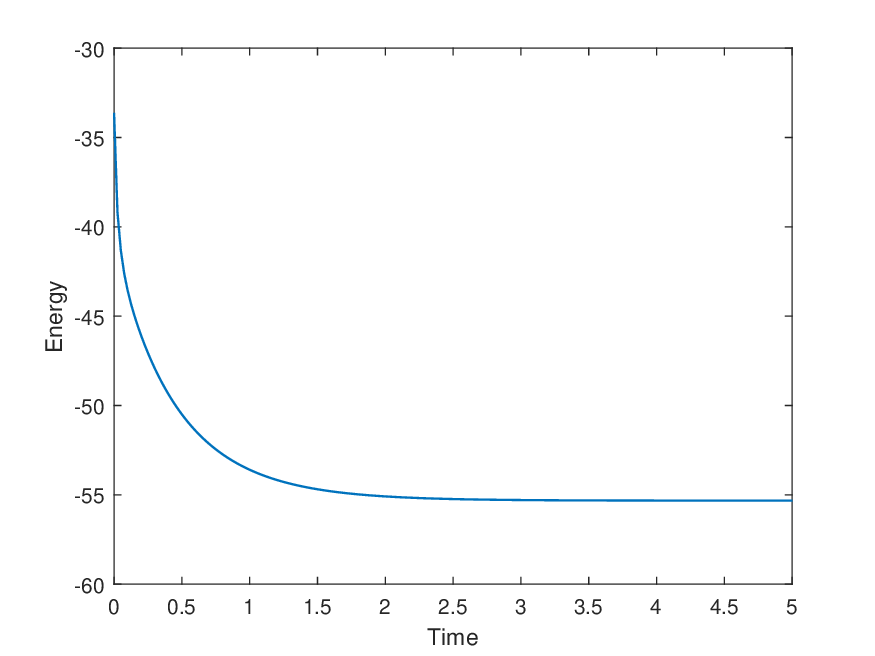}
	\includegraphics[width=0.32\linewidth]{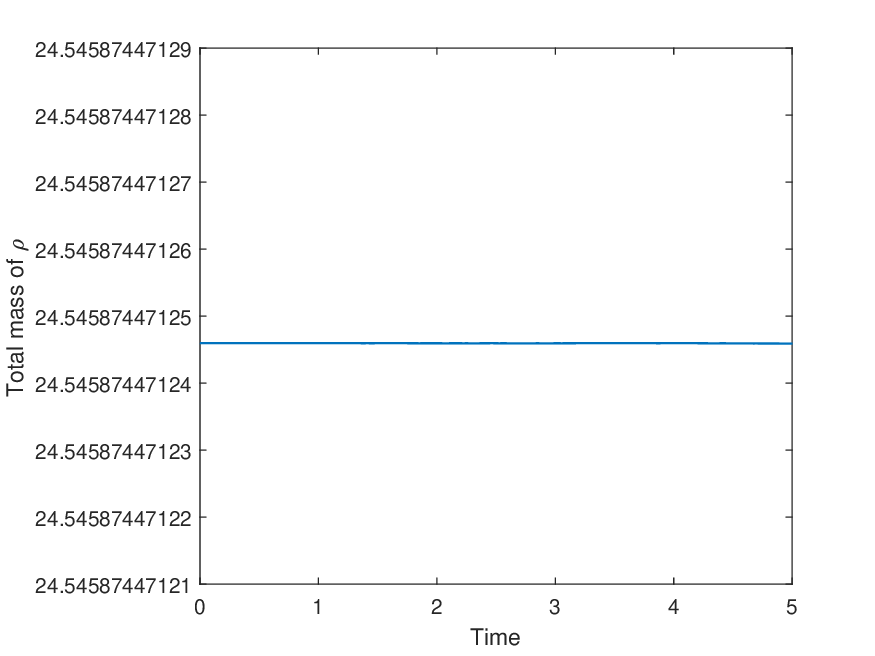}
	\caption{Evolutions of cell density, chemoattractant concentration, discrete energy and total cell mass for the BE-BCFD scheme on uniform grids with $M=40$ for Example \ref{exam:less8pi:pp}. }
	\label{fig:PP-smooth-M40-uni}
\end{figure}
\begin{figure}[!t]
	\centering
	\includegraphics[width=0.32\linewidth]{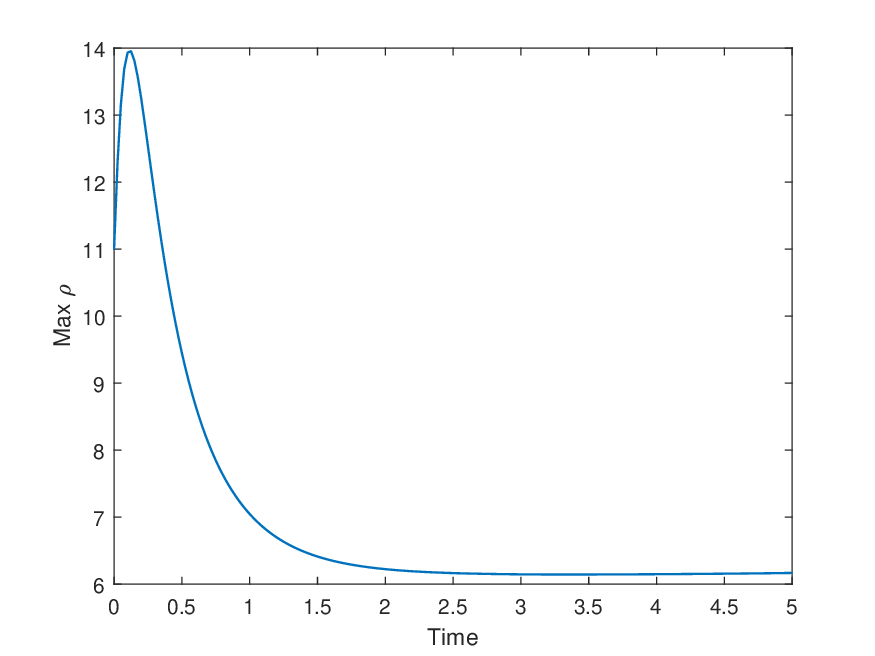}
	\includegraphics[width=0.32\linewidth]{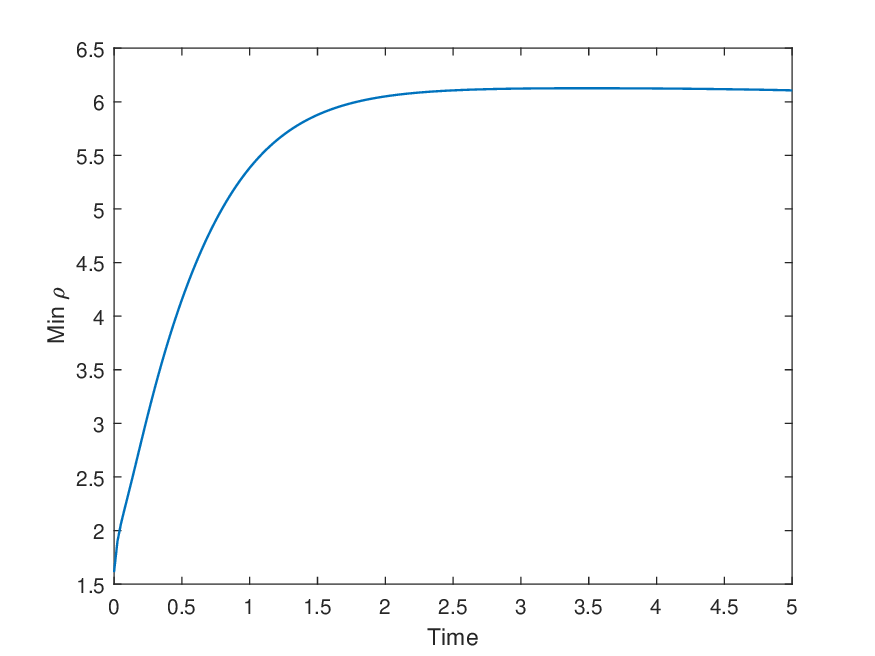}
	\includegraphics[width=0.32\linewidth]{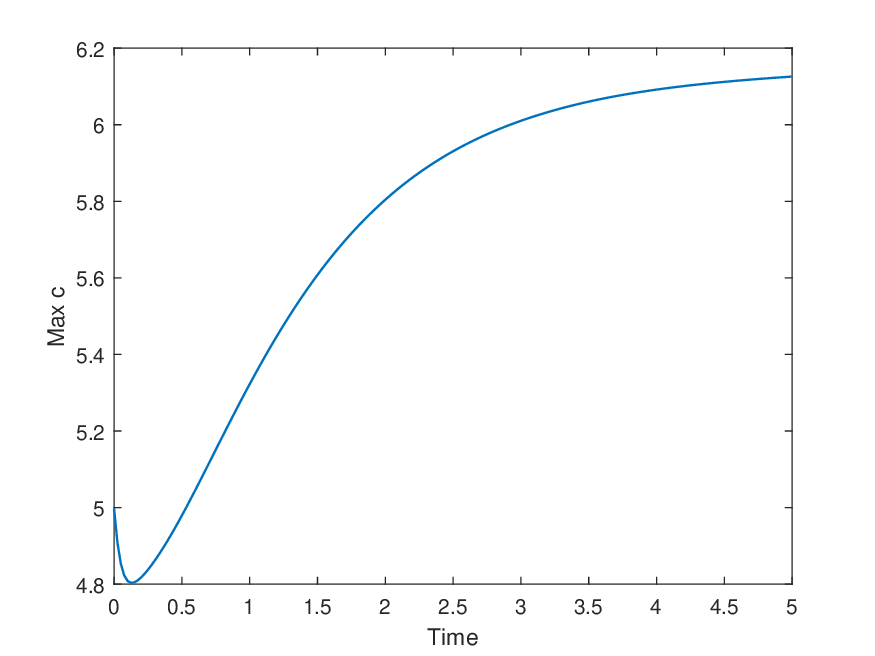}
	\includegraphics[width=0.32\linewidth]{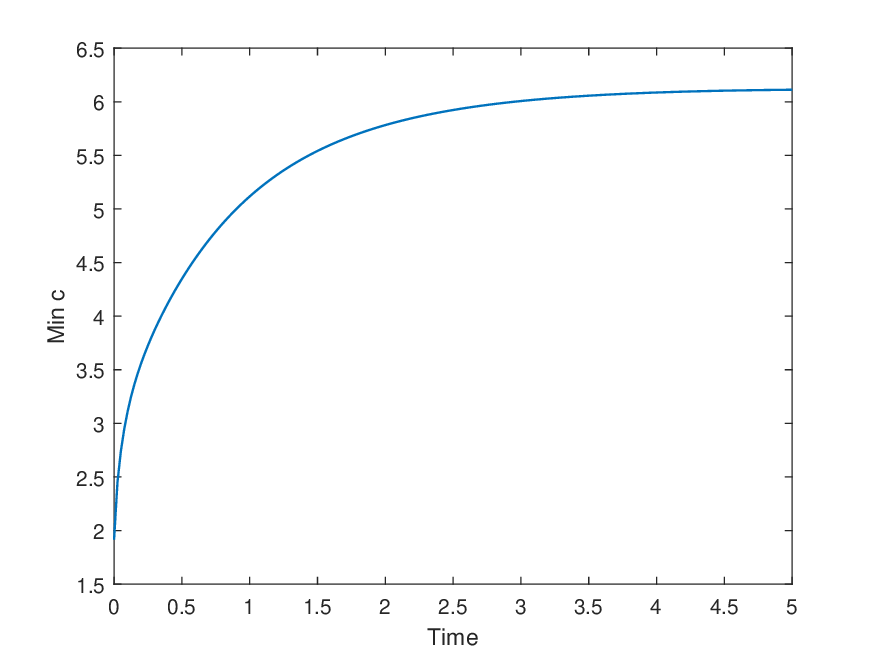}
	\includegraphics[width=0.32\linewidth]{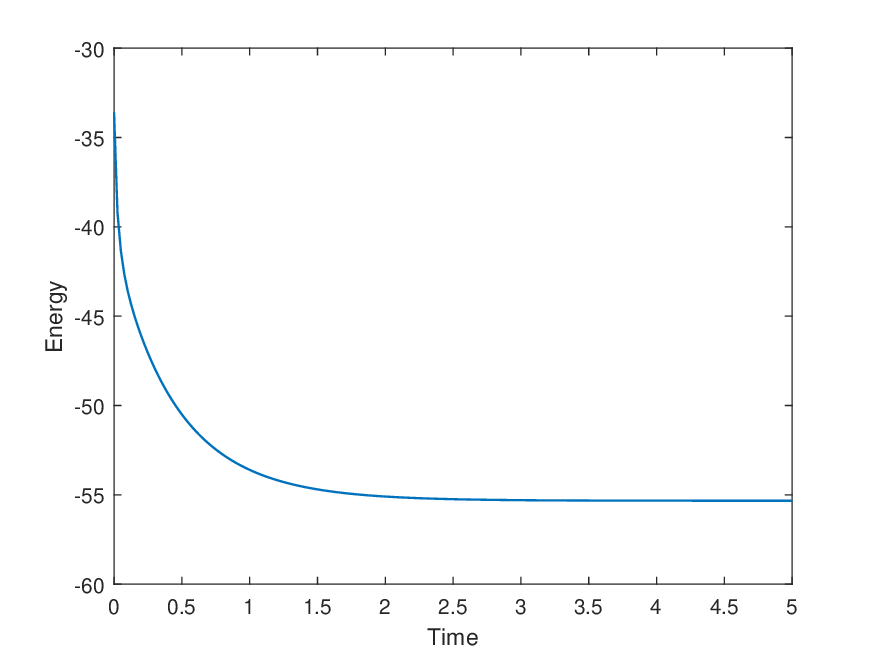}
	\includegraphics[width=0.32\linewidth]{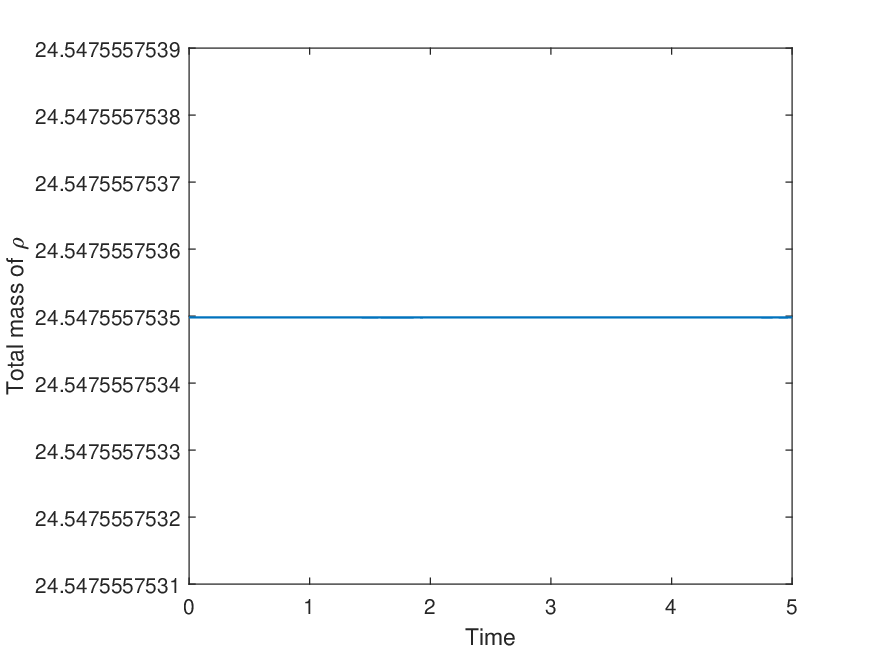}
	\caption{Evolutions ofcell density, chemoattractant concentration, discrete energy and total cell mass for the BE-BCFD scheme on non-uniform grids with $M=40$ and $\beta=0.4$ for Example \ref{exam:less8pi:pp}. }
	\label{fig:PP-smooth-m40-beta04}
\end{figure}

\begin{example}\label{exam:grate8pi:pp}	(Blow-up) 
In order to further explore the chemotactic blow-up for the parabolic-parabolic Keller-Segel system, we increase the initial total cell mass to $M_\rho (0) \approx 31.42>8\pi$, by changing the initial cell density and chemoattractant concentration as
	\begin{equation*}
		\rho^o(x, y)=130 \exp \left(-13 (x^2+y^2 )\right),\quad
	    c^o(x, y)=13 \exp \left(-2 (x^2+y^2 )\right),
	\end{equation*}
	in the domain $\Omega=(-1,1)^2$. 
\end{example}

\begin{figure}[!t]
	\centering
	\includegraphics[width=0.32\linewidth]{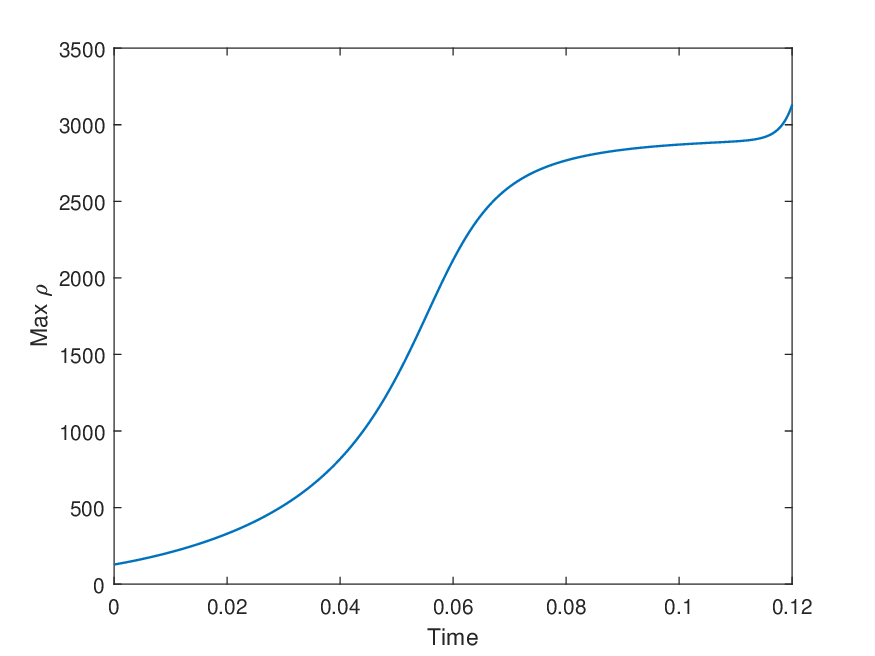}
	\includegraphics[width=0.32\linewidth]{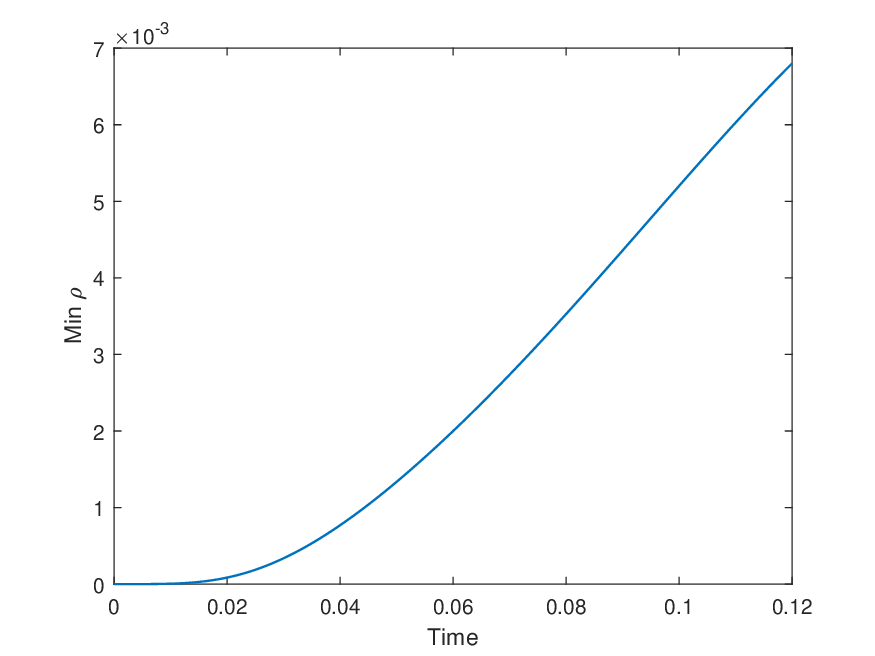}
	\includegraphics[width=0.32\linewidth]{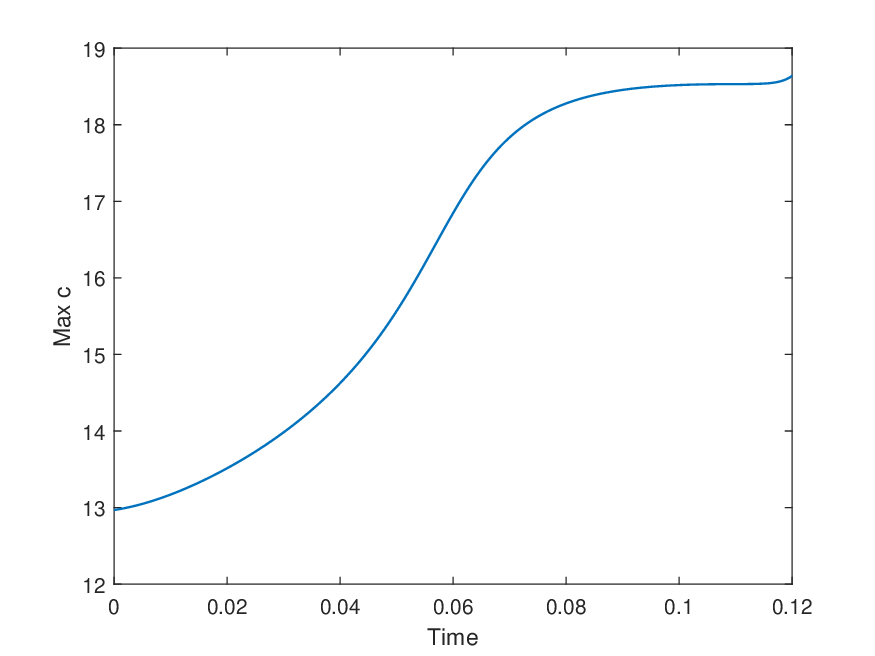}
	\includegraphics[width=0.32\linewidth]{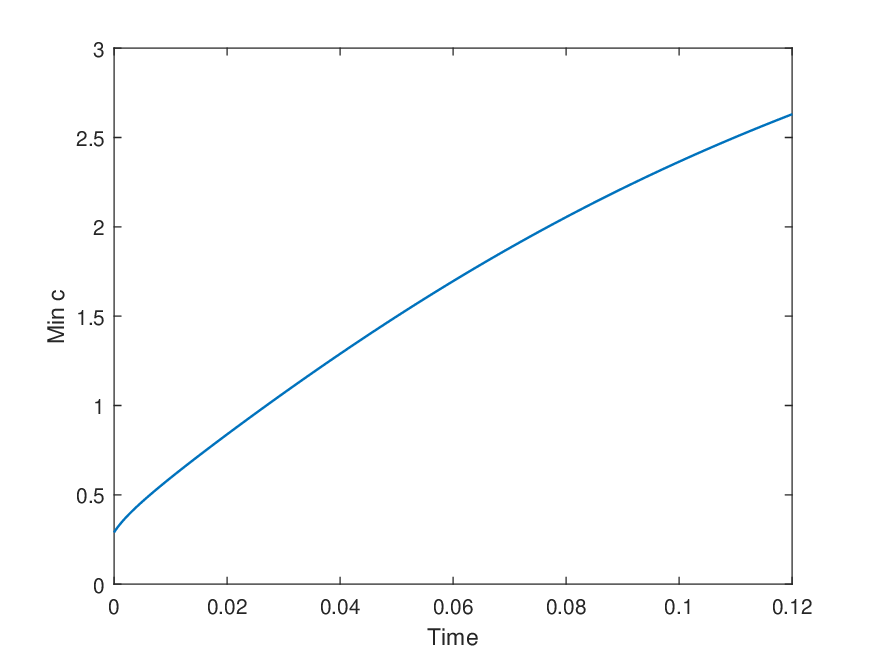}
	\includegraphics[width=0.32\linewidth]{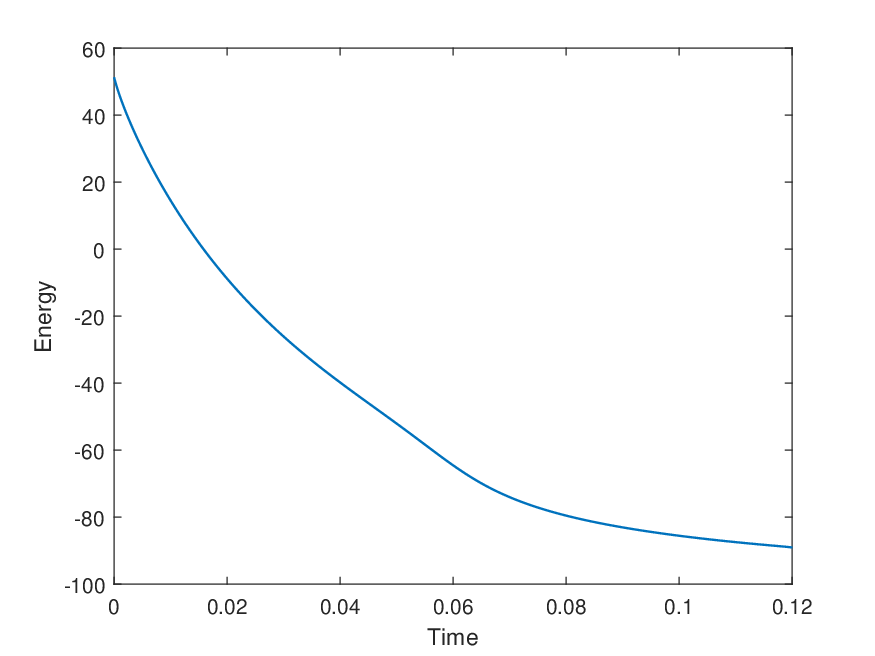}
	\includegraphics[width=0.32\linewidth]{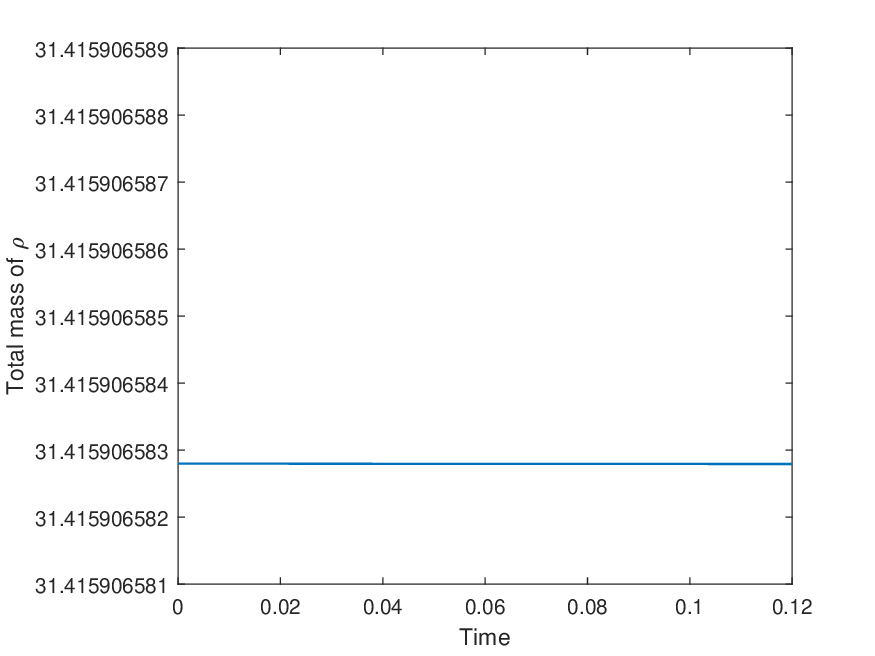}
	\caption{Evolutions of cell density, chemoattractant concentration, discrete energy and total cell mass for the PC-BCFD scheme on uniform grids with $M=40$ for Example \ref{exam:grate8pi:pp}. }
	\label{fig:PP-rhoBU-22M40-uni}
\end{figure}
\begin{figure}[!ht]
	\centering
	\includegraphics[width=0.32\linewidth]{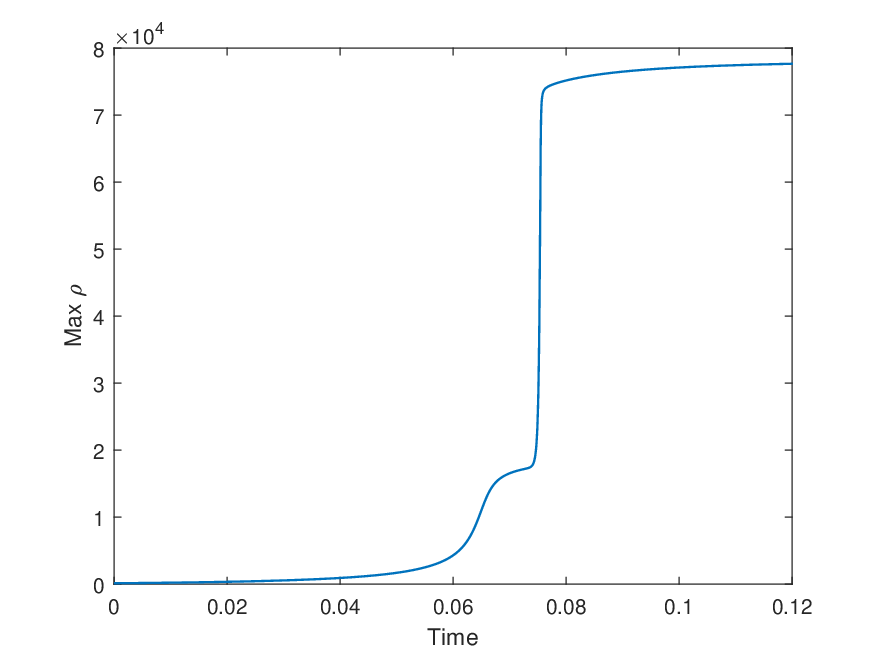}
	\includegraphics[width=0.32\linewidth]{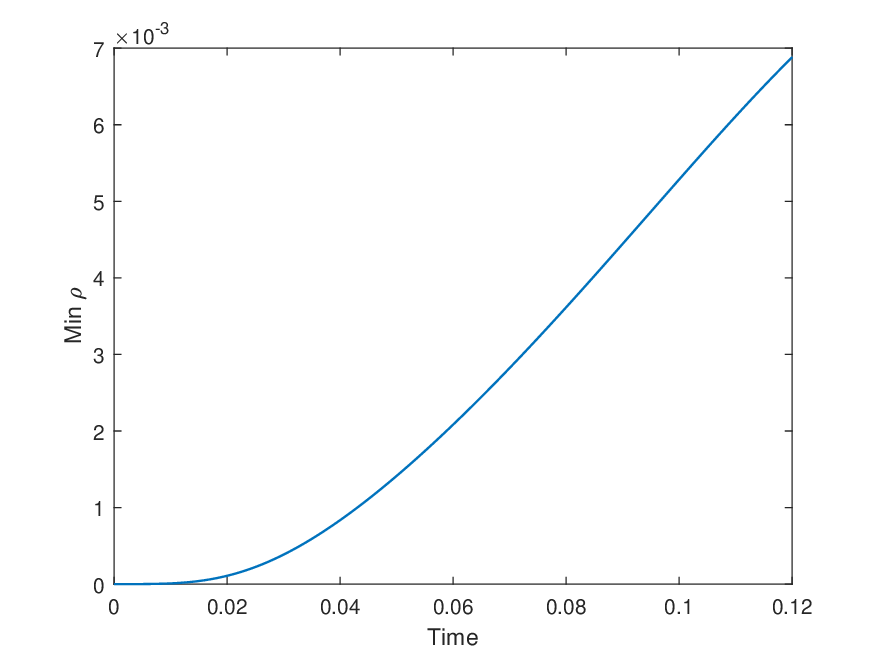}
	\includegraphics[width=0.32\linewidth]{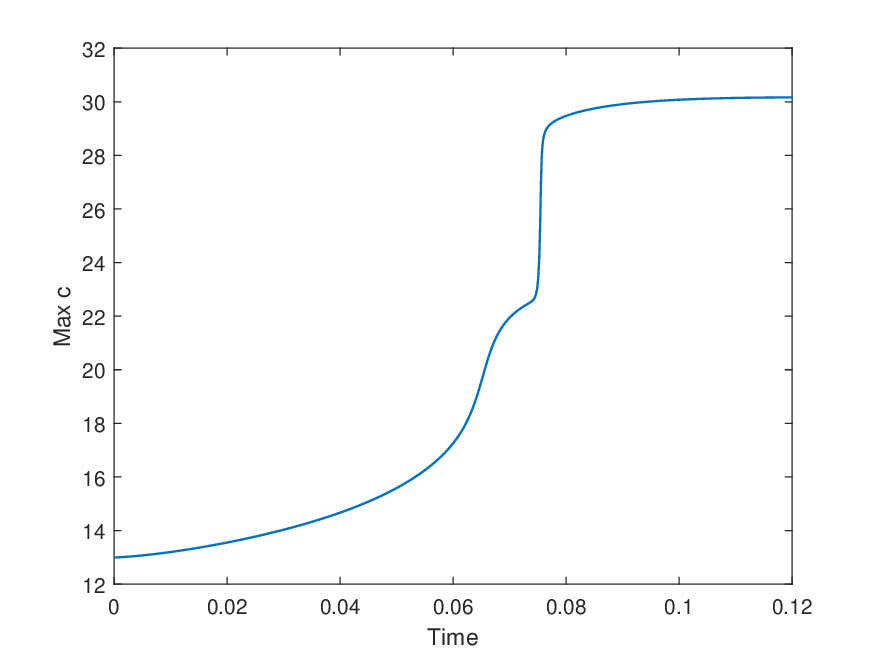}
	\includegraphics[width=0.32\linewidth]{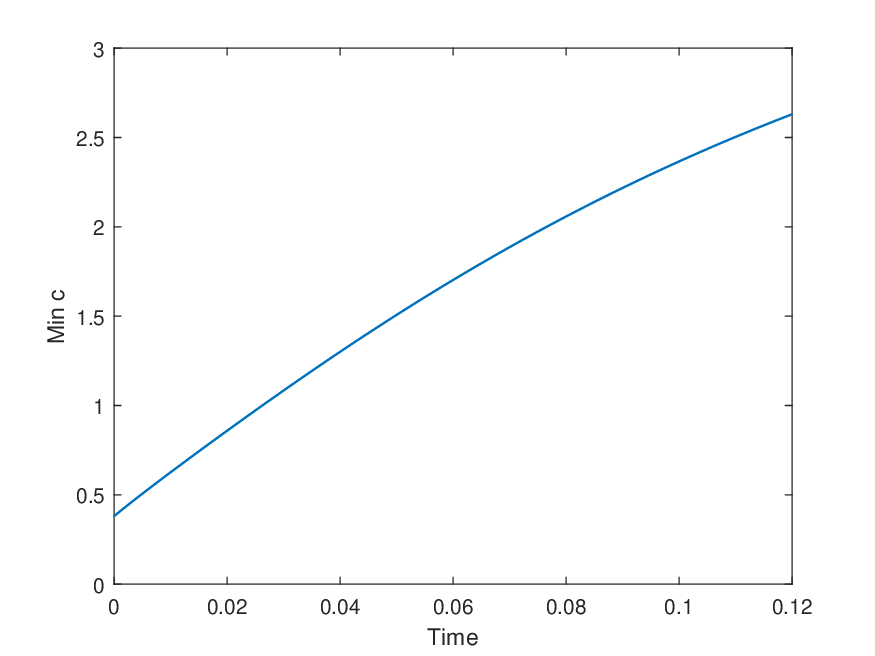}
	\includegraphics[width=0.32\linewidth]{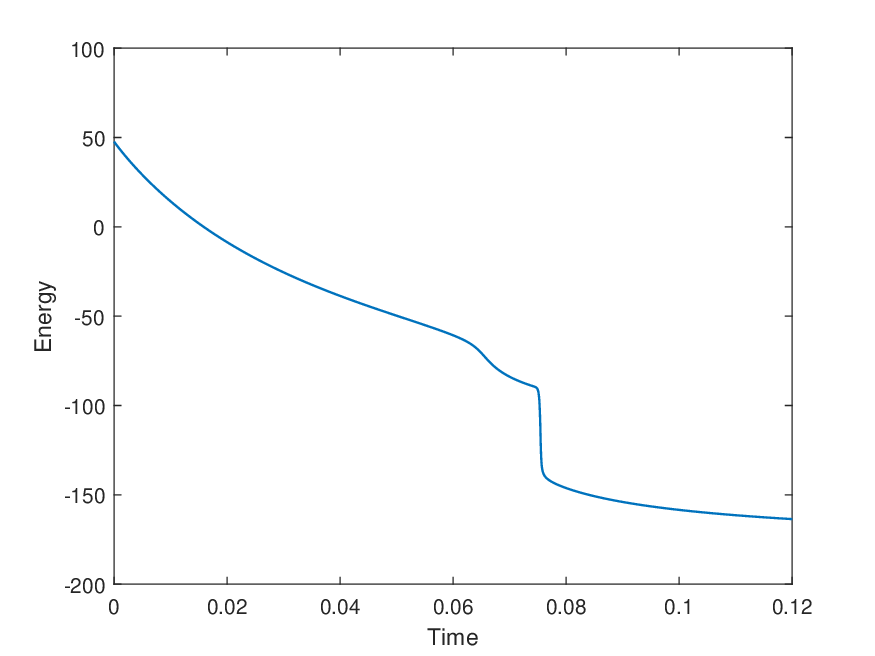}
	\includegraphics[width=0.32\linewidth]{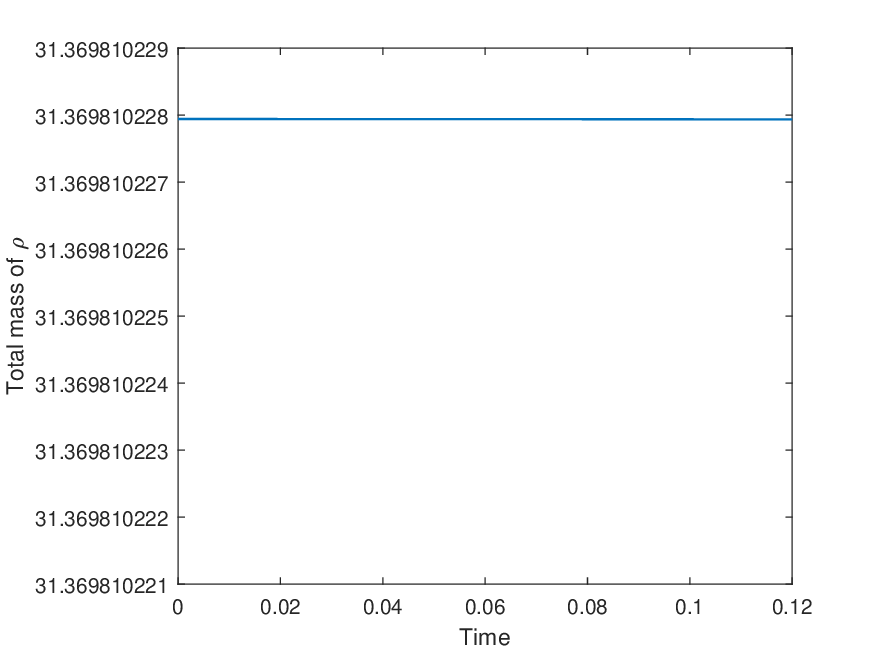}
	\caption{Evolutions of cell density, chemoattractant concentration, discrete energy and total cell mass for the PC-BCFD scheme on non-uniform grids with $M=40$ and $\gamma=1.285$ for Example \ref{exam:grate8pi:pp}. }
	\label{fig:PP-rhoBU-22M40-mid1285}
\end{figure}
\begin{figure}[!ht]
	\centering
	\includegraphics[width=0.32\linewidth]{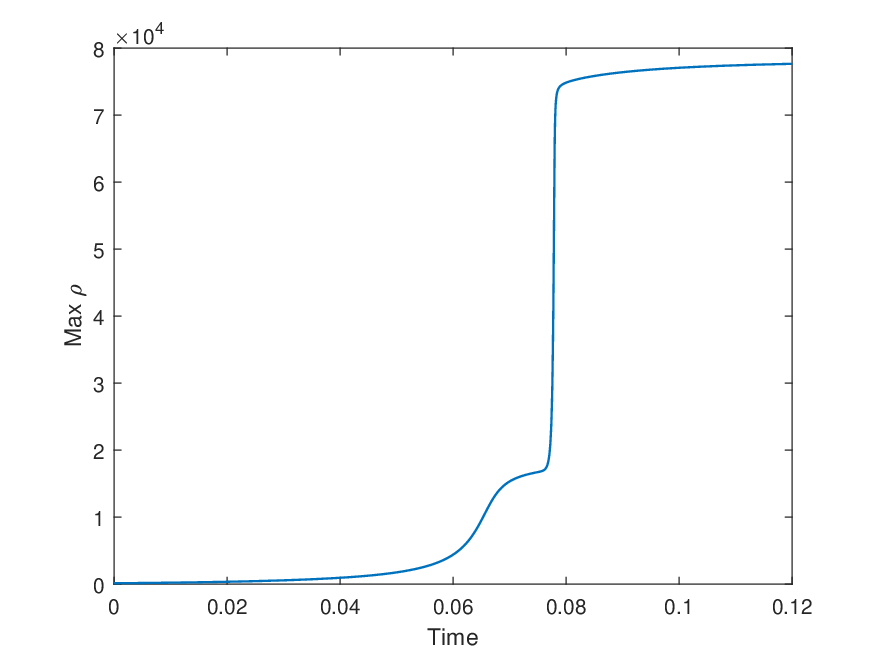}
	\includegraphics[width=0.32\linewidth]{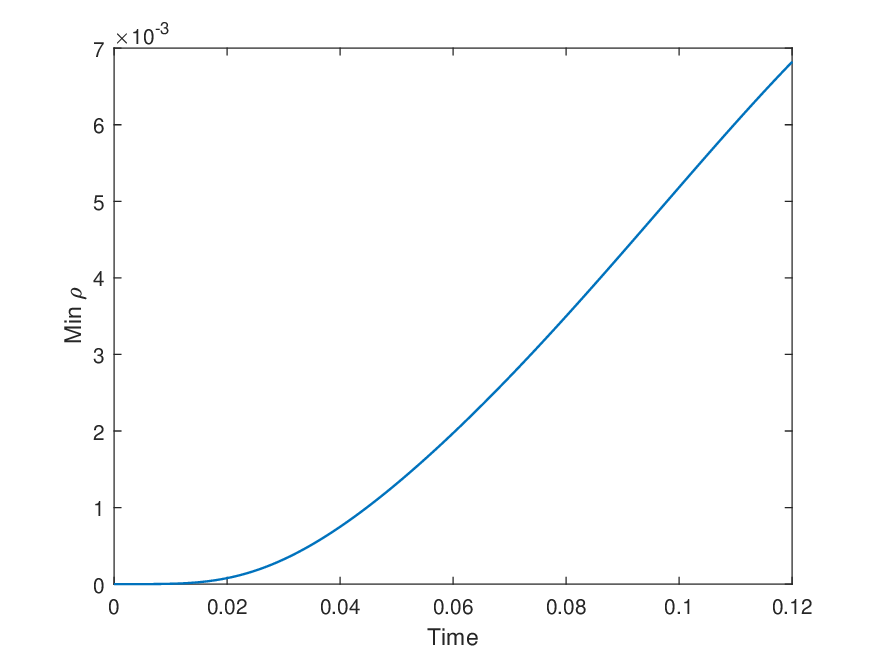}
	\includegraphics[width=0.32\linewidth]{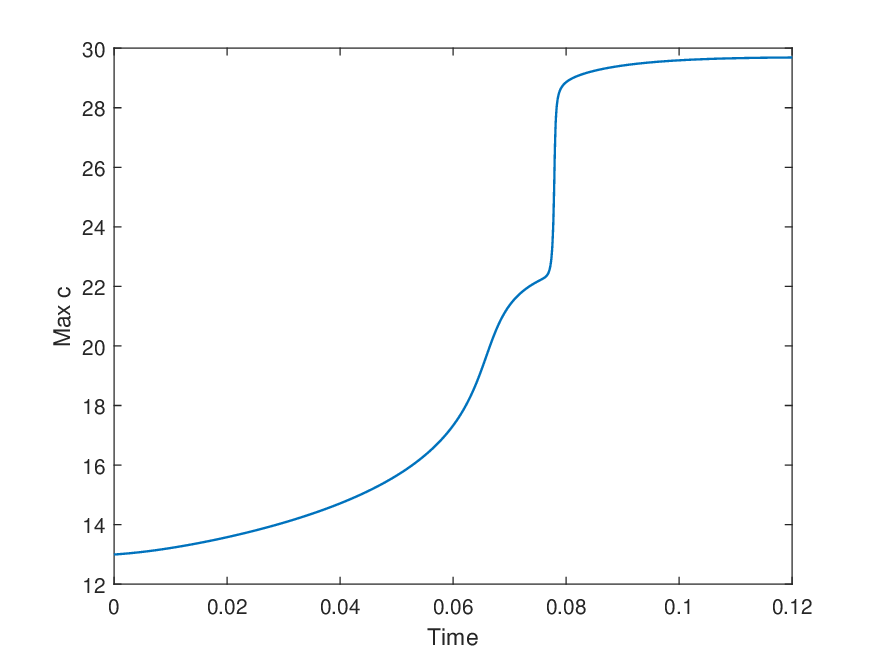}
	\includegraphics[width=0.32\linewidth]{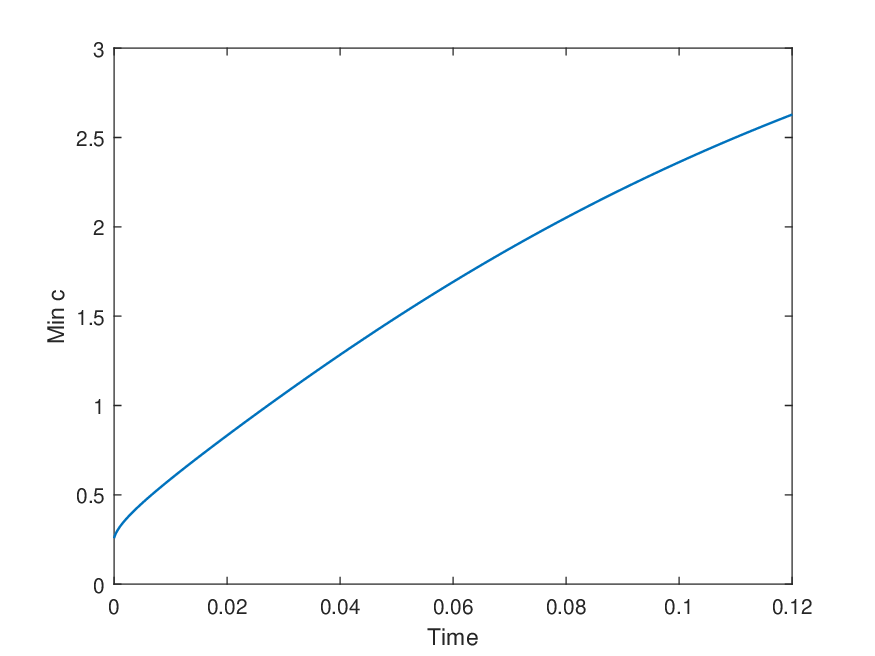}
	\includegraphics[width=0.32\linewidth]{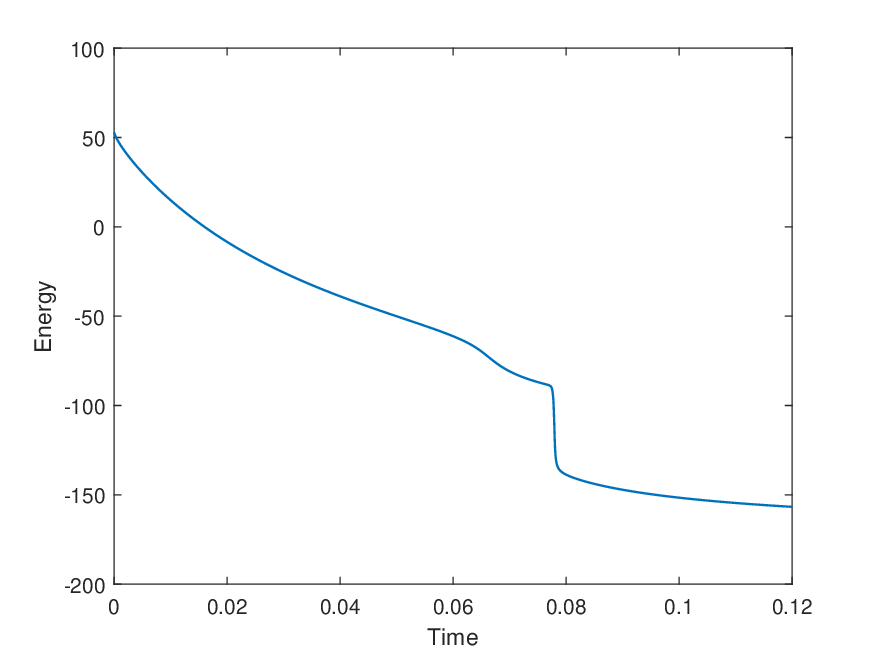}
	\includegraphics[width=0.32\linewidth]{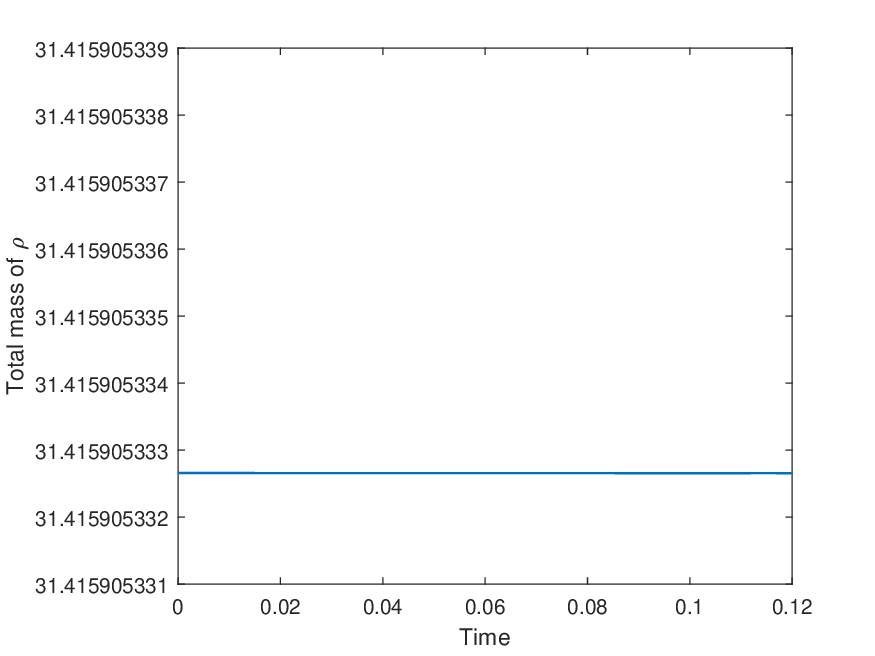}
	\caption{Evolutions of cell density, chemoattractant concentration, discrete energy and total cell mass for the PC-BCFD scheme on uniform grids with $M=100$ for Example \ref{exam:grate8pi:pp}. }
	\label{fig:PP-rhoBU-22M100-uni}
\end{figure}
In light of the expected rapid increase in cell density, we set $\tau=5\times 10^{-5}$ in the simulation. Evolutions of the cell density $\rho_h$, the chemoattractant concentration $c_h$, the discrete energy, and the total cell mass are illustrated under three grid configurations: uniform grids with $M=40$, non-uniform grids with $M=40$ and $\gamma=1.285$ described by \eqref{grid:mid}, and uniform grids with $M=100$. The simulation results for 
the PC-BCFD scheme are shown in 
Figs. \ref{fig:PP-rhoBU-22M40-uni}--\ref{fig:PP-rhoBU-22M100-uni}. We observe that:
\begin{enumerate}[(i)]
    \item The second-order scheme also uphold the fundamental physical principles of positivity preservation, energy dissipation, and mass conservation across both uniform and non-uniform spatial grids, although provable unconditional positivity preservation and energy dissipation are not available. These highlight the need for further theoretical investigations to design a positivity-preserving linear second-order numerical scheme.
    
    \item Nearly identical blow-up phenomena are observed on the coarser non-uniform grids ($M=40$, $\gamma=1.285$) and the finer uniform grids ($M=100$). In contrast, the simulated blow-up time is significantly delayed on the coarser uniform grids ($M=40$). This further underscores the necessity of using non‑uniform spatial grids in such simulations.

    \item As mentioned in Remark \ref{rm:positivity}, the time-step conditions in Theorem \ref{thm:pc-posi-preserve} are sufficient but not necessary. For instance, when simulation on the uniform grid with $M=100$, the time stepsize $\tau=5\times 10^{-5}$ indeed satisfies the sufficient condition $\tau\le h^2/2=2\times 10^{-4}$ for the concentration. In contrast, for the middle-refinement grid with $M=40$ and $\gamma=1.285$, we have $h^2/(2\sigma^6)\approx 2.34\times 10^{-6}$, which is much smaller than the actual time stepsize used in the computation. Nevertheless, the minimum values of both $\rho_h$ and $c_h$ remain positive throughout the simulation. 
\end{enumerate}

Furthermore, the contour plots and corresponding side views of $\rho_h$ at time instants $t = 0.01, 0.08$ and $0.12$ for
the PC-BCFD scheme are presented in 
Figs. \ref{fig:PP-rhoBU-22M100-uni-solution}–\ref{fig:PP-rhoBU-22M40-mid1285-solution}, in which similar simulation results are observed on the finer uniform grids ($M = 100$) and coarser non-uniform grids ($M = 40$, $\gamma = 1.285$). These again demonstrate the efficiency and accuracy of the non-uniform grid BCFD methods in modeling such blow-up phenomena.
%
\begin{figure}[!h]
	\centering
	\includegraphics[width=0.32\linewidth]{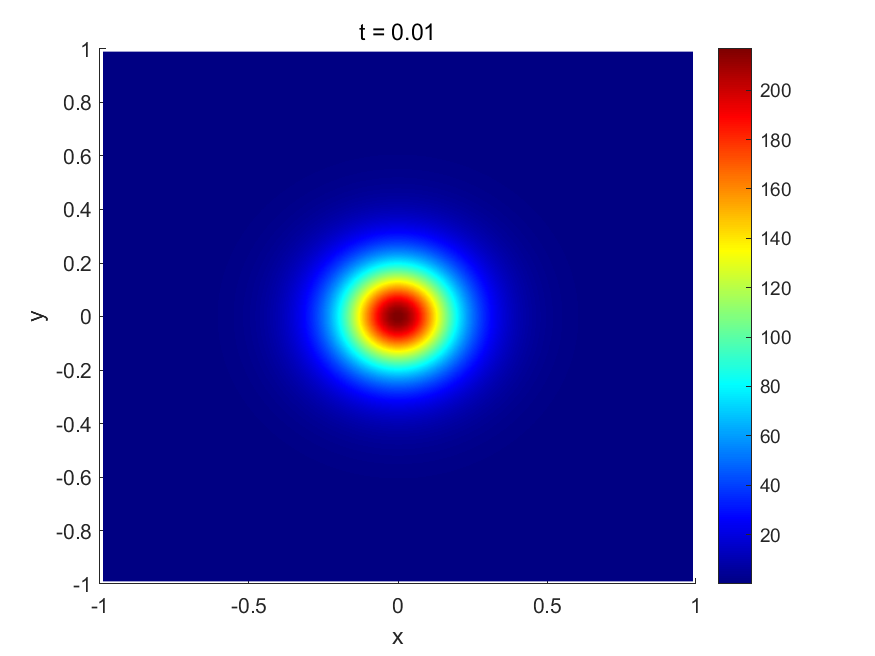}
	\includegraphics[width=0.32\linewidth]{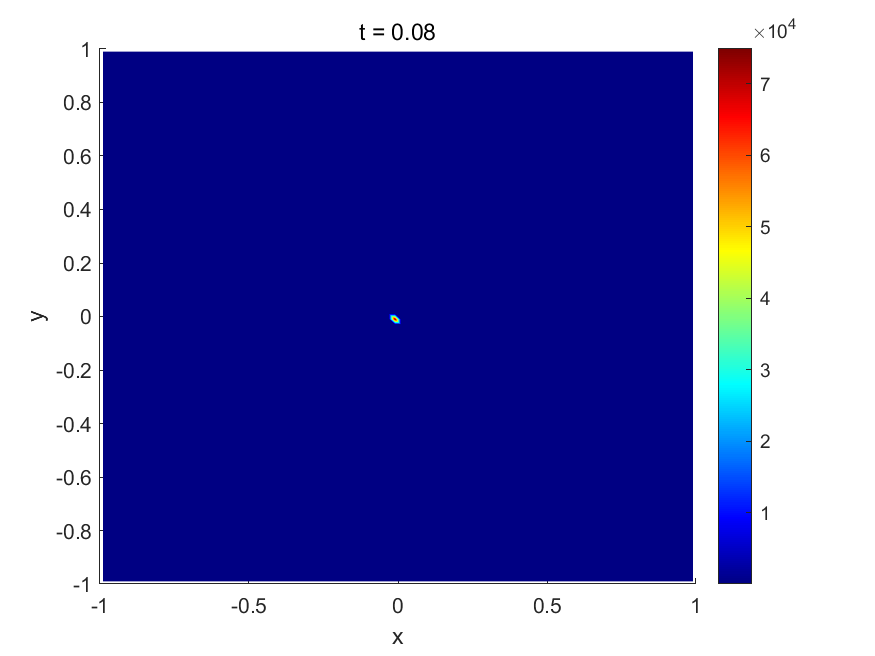}
	\includegraphics[width=0.32\linewidth]{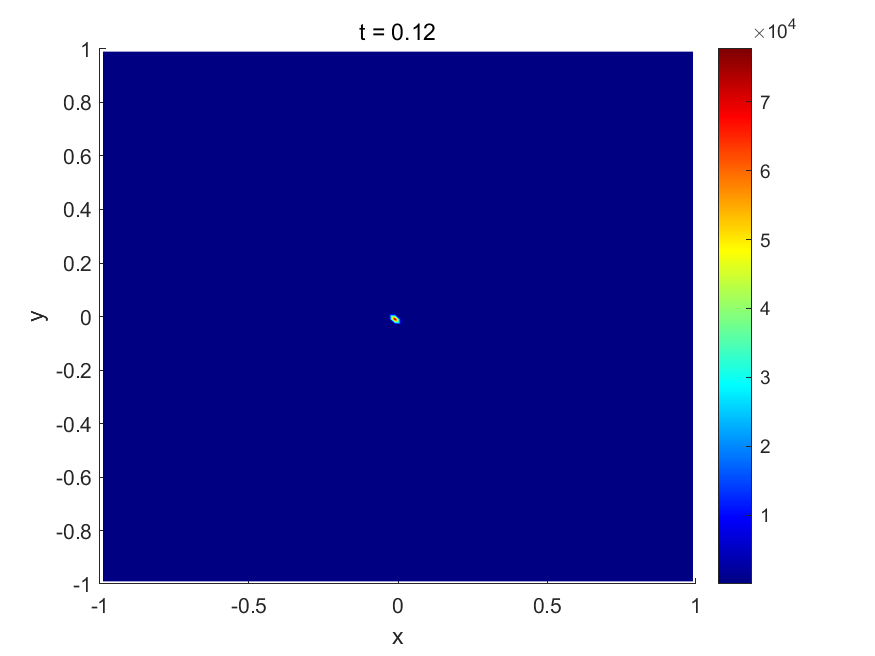}
	\includegraphics[width=0.32\linewidth]{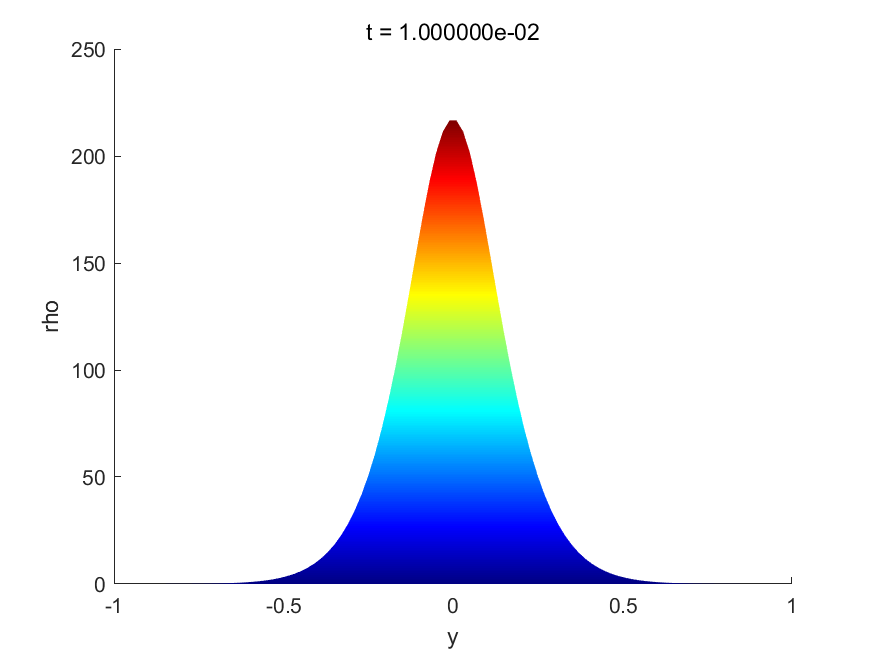}
	\includegraphics[width=0.32\linewidth]{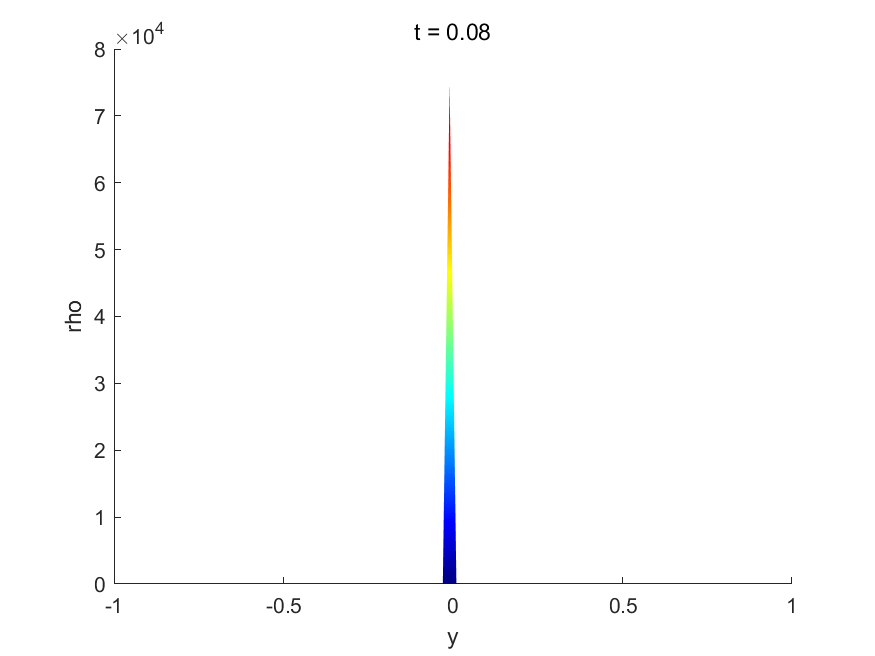}
	\includegraphics[width=0.32\linewidth]{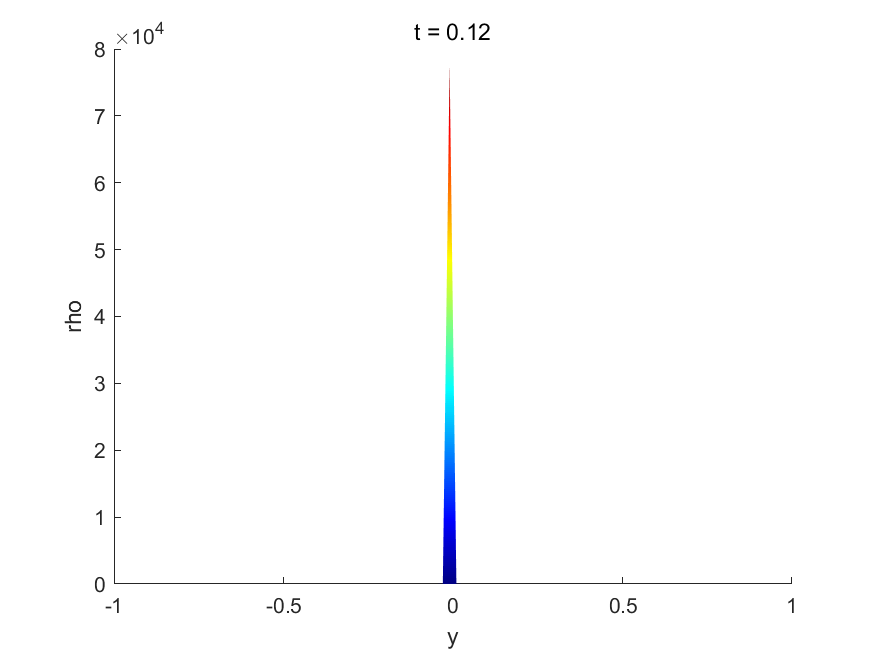}
	\caption{The contour plots (top) and side views (bottom) of $\rho_h$ for the PC-BCFD scheme  on uniform grids with $M=100$ at different time instants for Example \ref{exam:grate8pi:pp}.}
	\label{fig:PP-rhoBU-22M100-uni-solution}
\end{figure}
\begin{figure}[!h]
	\centering
	\includegraphics[width=0.32\linewidth]{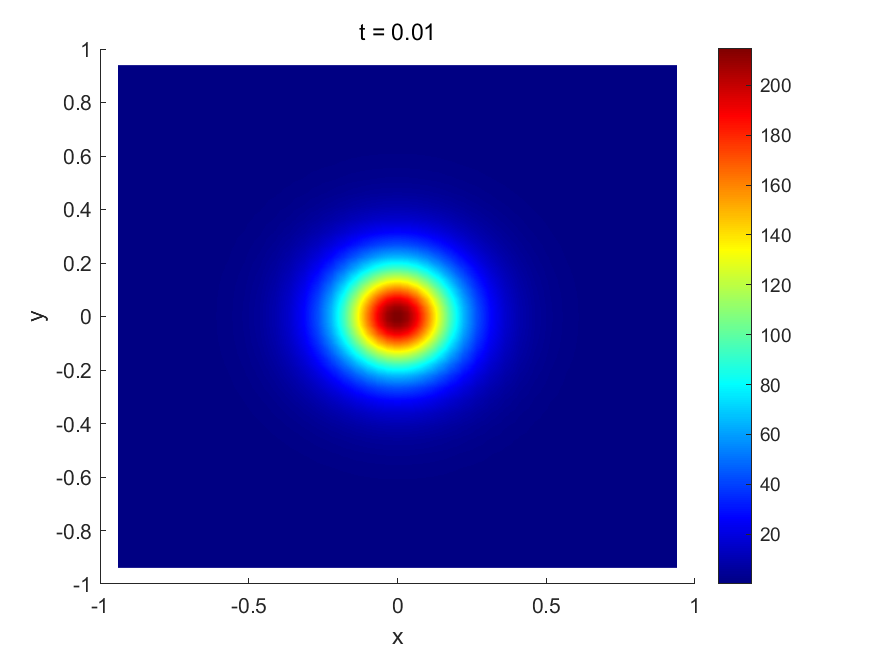}
	\includegraphics[width=0.32\linewidth]{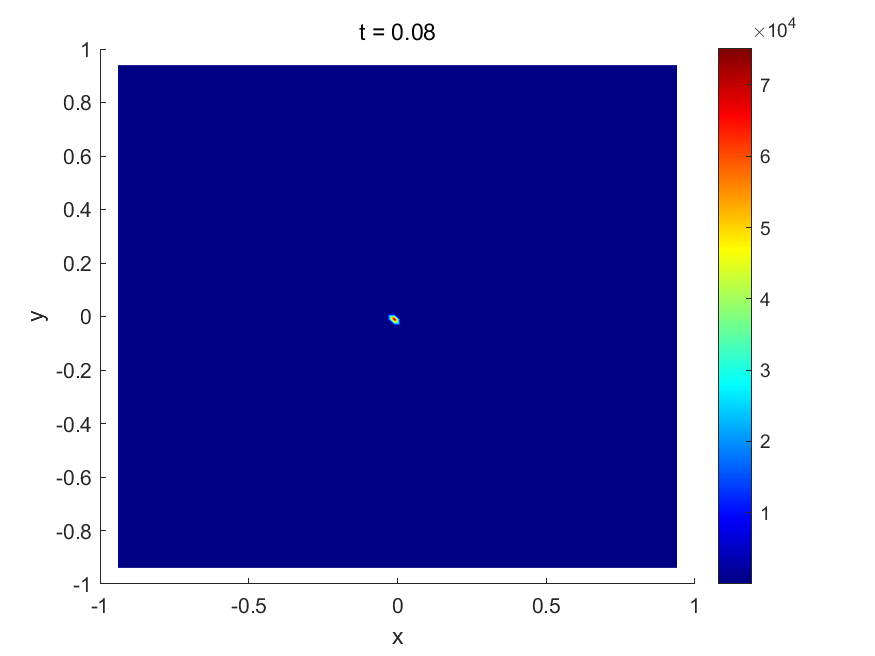}
	\includegraphics[width=0.32\linewidth]{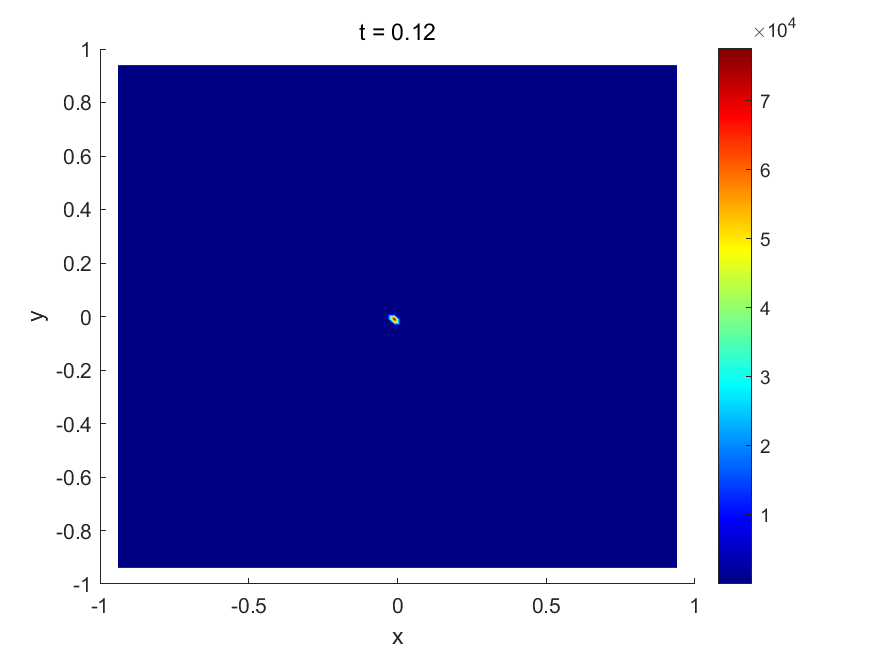}
	\includegraphics[width=0.32\linewidth]{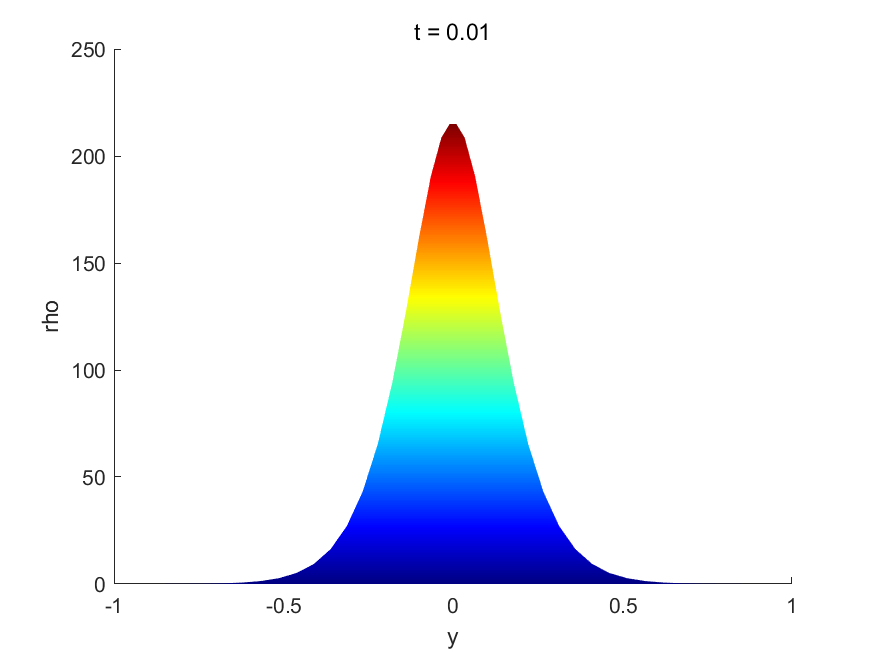}
	\includegraphics[width=0.32\linewidth]{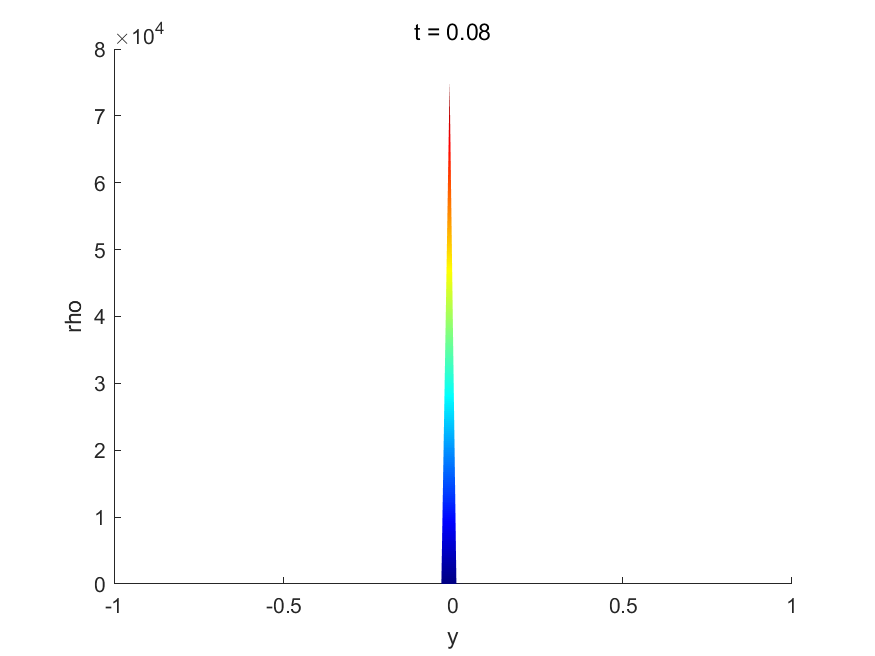}
	\includegraphics[width=0.32\linewidth]{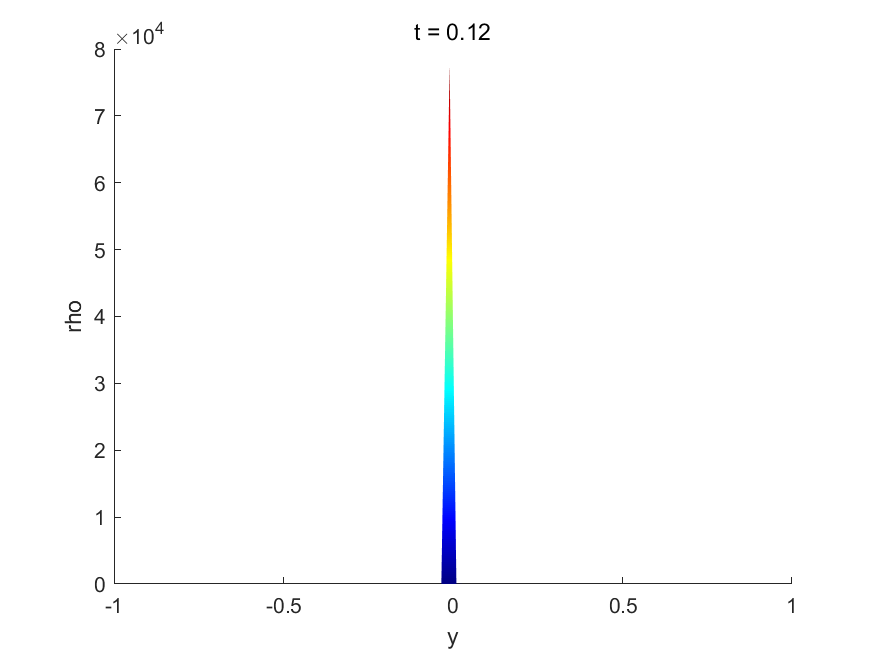}
	\caption{The contour plots (top) and side views (bottom) of $\rho_h$ for the PC-BCFD scheme  on non-uniform grids with $M=40$ and $\gamma=1.285$ at different time instants for Example \ref{exam:grate8pi:pp}.}
	\label{fig:PP-rhoBU-22M40-mid1285-solution}
\end{figure}

\begin{example}\label{exam:ppcheck}
 We finally consider a more challenging positivity test with extremely small minimum values of the initial density and concentration. More precisely, we take
\begin{equation*}
	\rho^o(x, y)=1000\exp\left(-100(x^2+y^2)\right),\quad
			c^o(x, y)=50\exp\left(-50(x^2+y^2)\right),
\end{equation*}
in the domain $\Omega=(-1,1)^2$. The corresponding initial total cell mass is $	M_\rho(0)\approx 31.36>8\pi$ .
\end{example}

\begin{figure}[!t]
	\centering
	\includegraphics[width=0.32\linewidth]{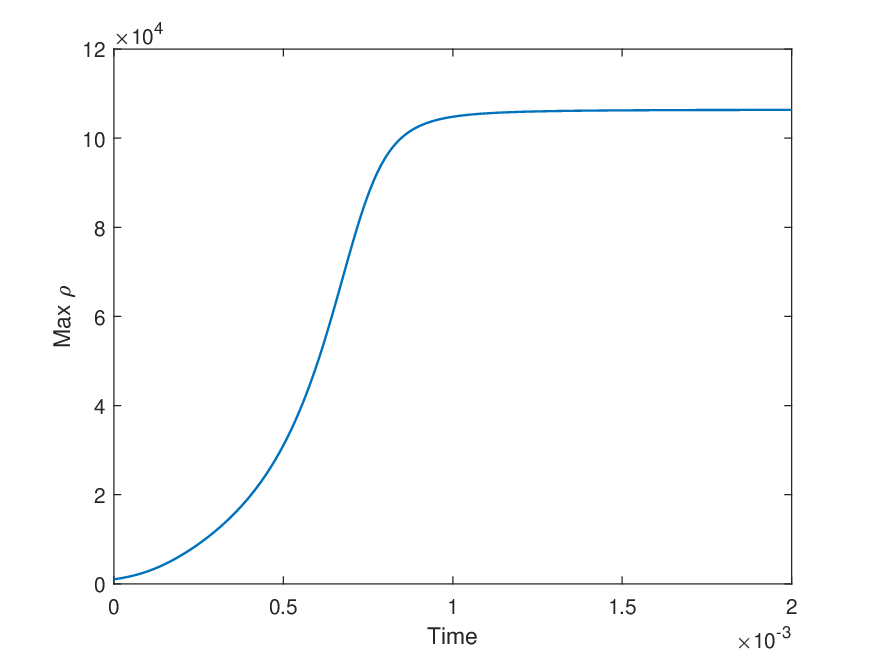}
	\includegraphics[width=0.32\linewidth]{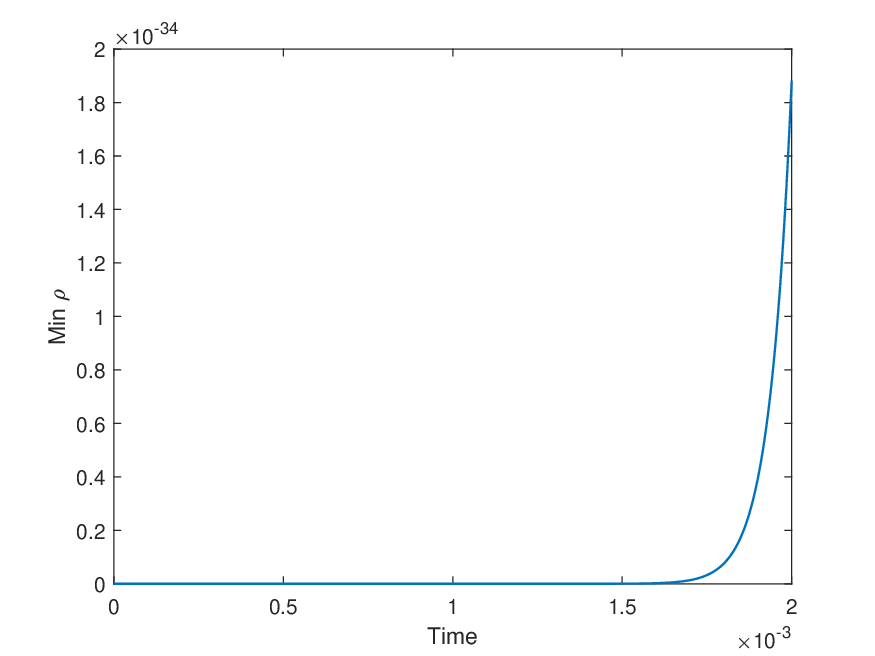}
	\includegraphics[width=0.32\linewidth]{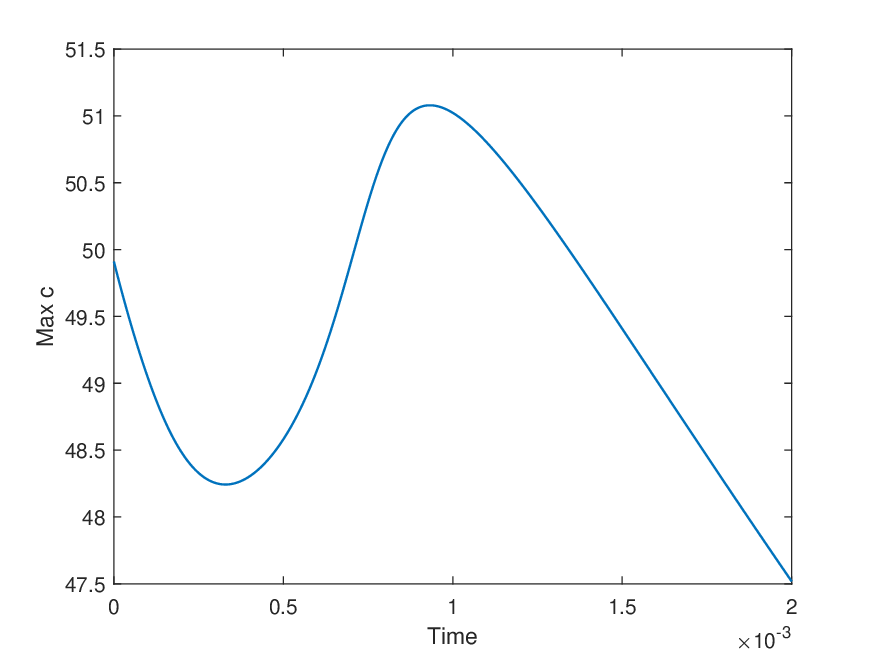}
	\includegraphics[width=0.32\linewidth]{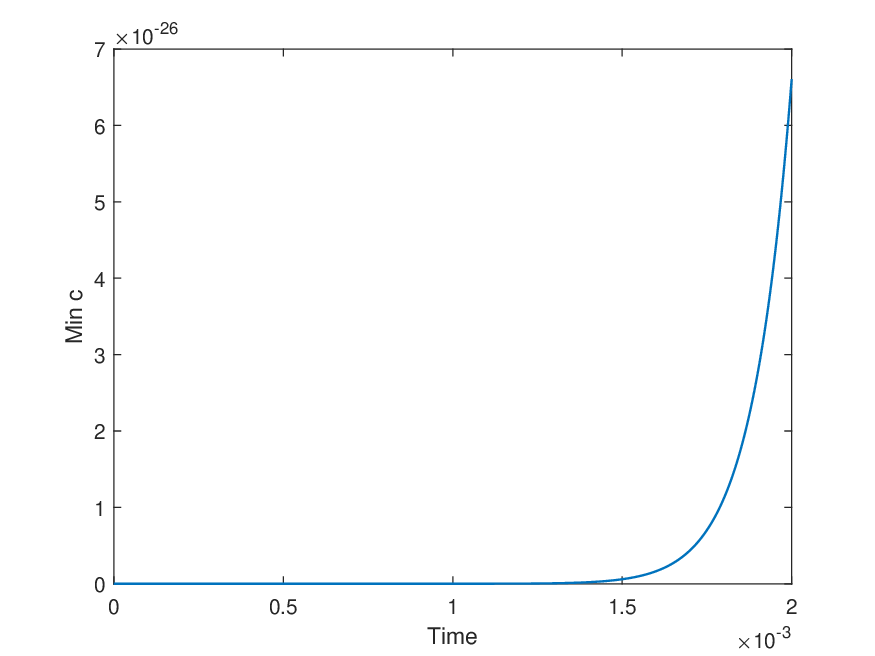}
	\includegraphics[width=0.32\linewidth]{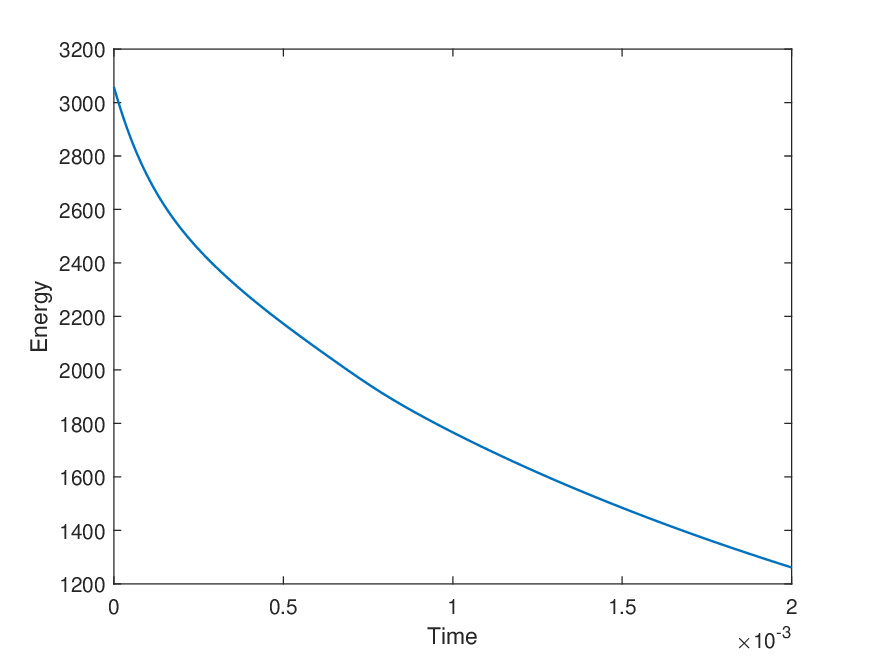}
	\includegraphics[width=0.32\linewidth]{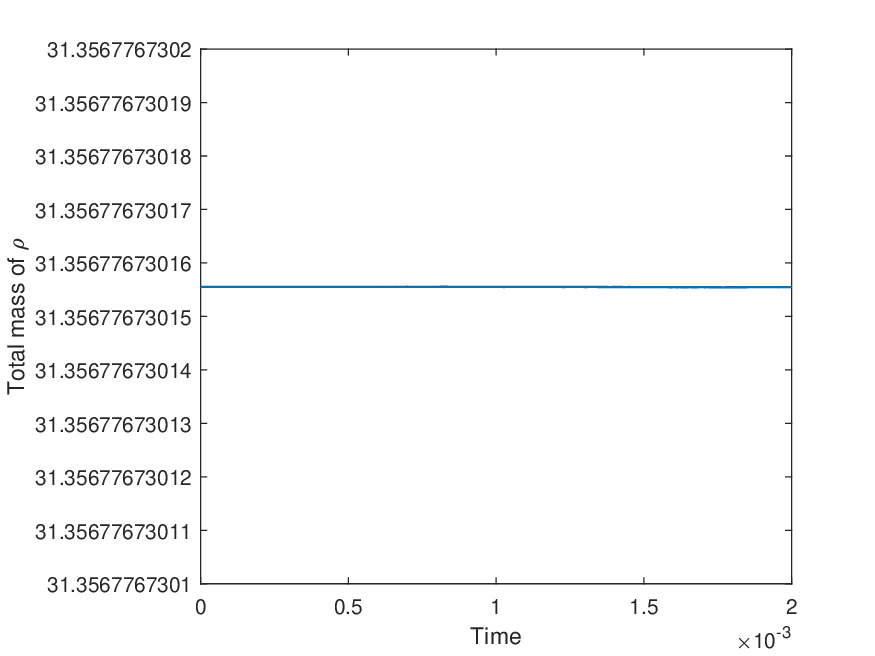}
	\caption{Evolutions of cell density, chemoattractant concentration, discrete energy and total cell mass for the BE-BCFD scheme on non-uniform grids with $M=80$ and $\gamma=1.285$ for Example \ref{exam:ppcheck}. }
	\label{fig:PP-rhoBU-12M80-mid1285}
\end{figure}
\begin{figure}[!t]
	\vspace{-10pt}
	\centering
	\includegraphics[width=0.32\linewidth]{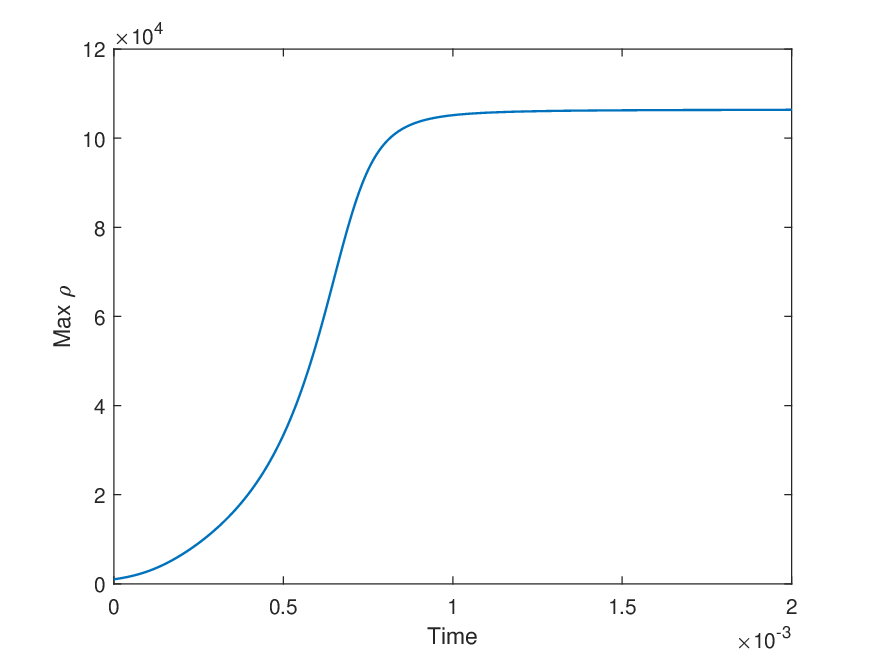}
	\includegraphics[width=0.32\linewidth]{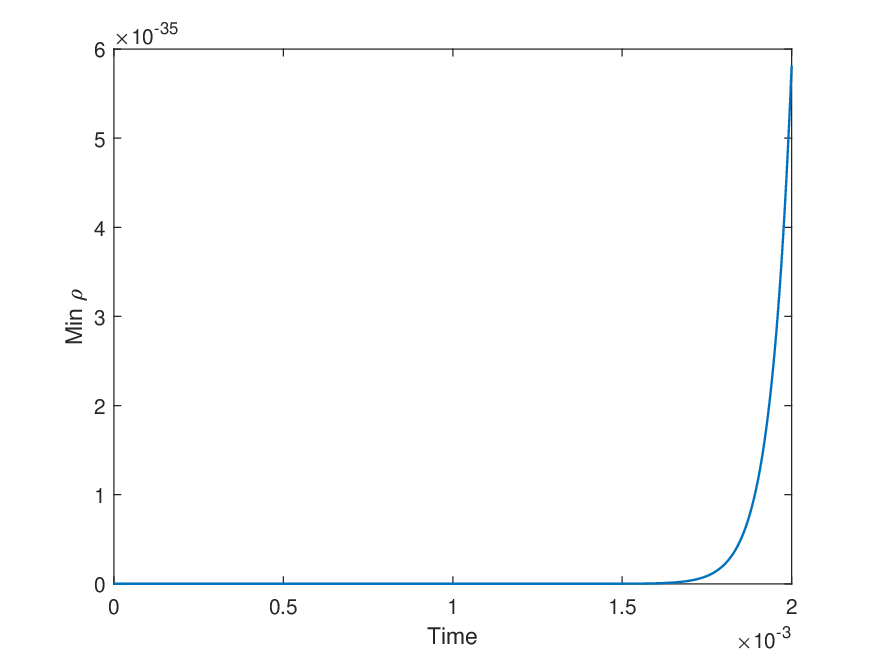}
	\includegraphics[width=0.32\linewidth]{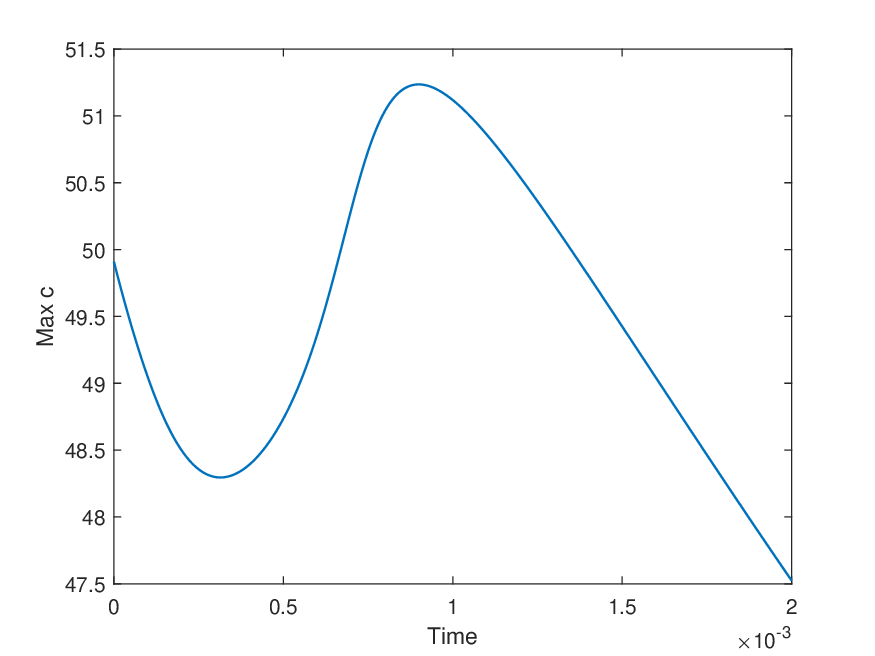}
	\includegraphics[width=0.32\linewidth]{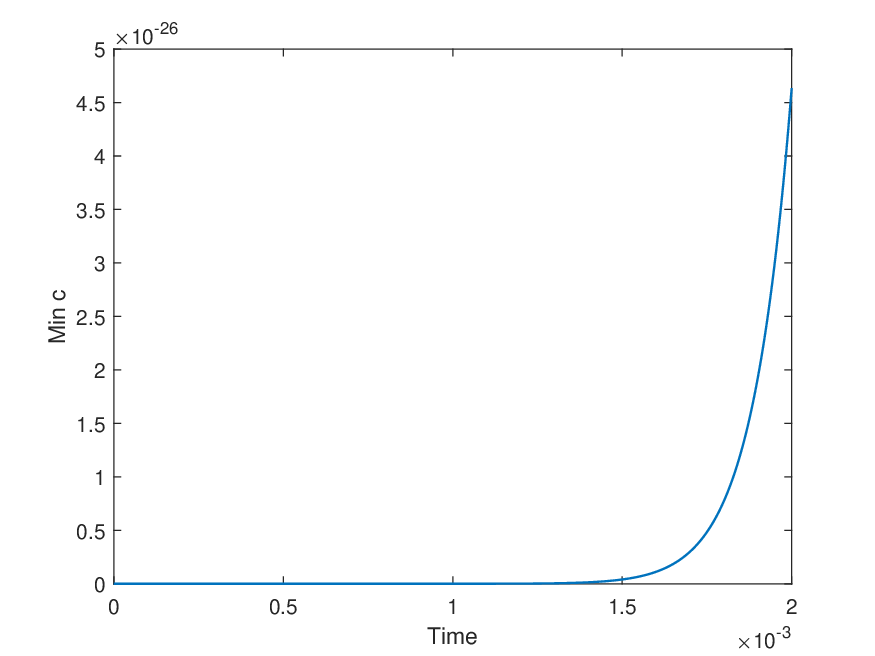}
	\includegraphics[width=0.32\linewidth]{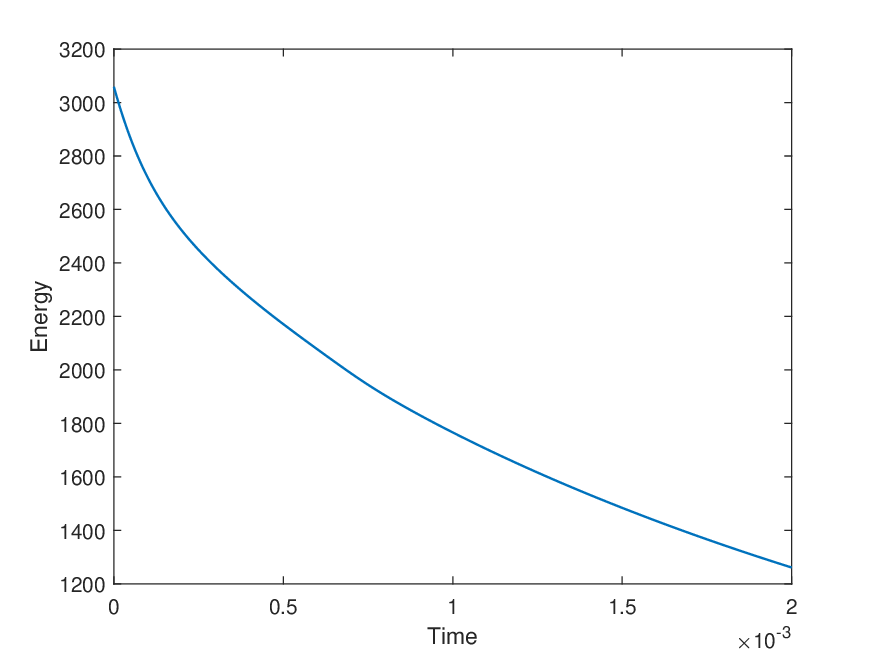}
	\includegraphics[width=0.32\linewidth]{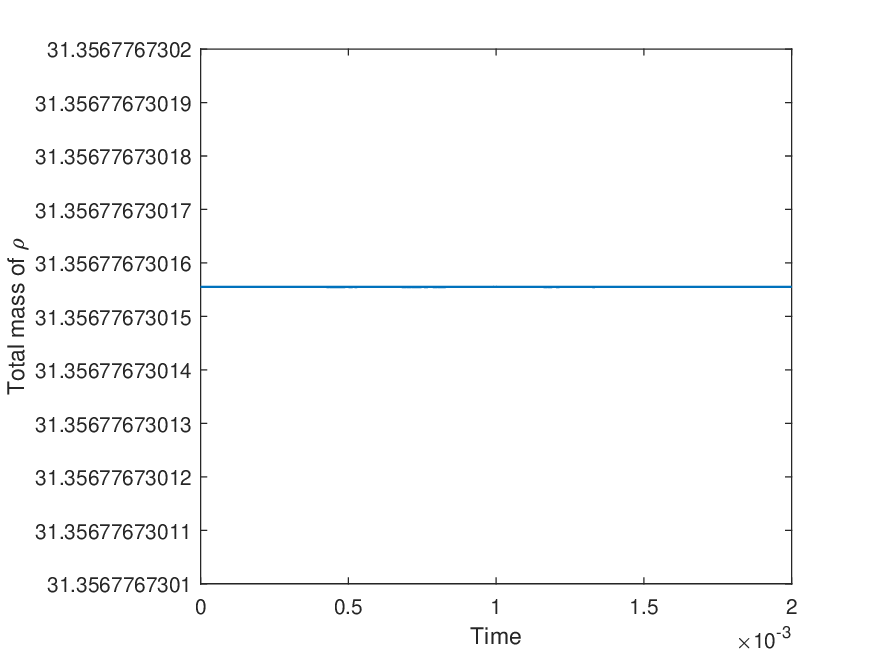}
	\caption{Evolutions of cell density, chemoattractant concentration, discrete energy and total cell mass for the PC-BCFD scheme on non-uniform grids with $M=80$ and $\gamma=1.285$ for Example \ref{exam:ppcheck}. }
	\label{fig:PP-rhoBU-22M80-mid1285}
\end{figure}
Although the initial data are strictly positive, their minimum values become extremely close to zero. This makes the positivity-preserving test particularly challenging. We use the non-uniform grids generated by \eqref{grid:mid} with $M=80$ and $\gamma=1.285$. Setting the time stepsize to $\tau=5\times 10^{-6}$, we compute the cell density and chemoattractant concentration up to $T=2\times 10^{-3}$. The evolutions of the cell density $\rho_h$, the chemoattractant concentration $c_h$, the discrete energy, and the total mass of $\rho_h$ for the BE-BCFD and PC-BCFD schemes are shown in Figs. \ref{fig:PP-rhoBU-12M80-mid1285} and \ref{fig:PP-rhoBU-22M80-mid1285}, respectively. We observe that: (i) Both schemes preserve the positivity of $\rho_h$ and $c_h$ throughout the simulation. Even though their minimum values drop to machine-precision levels, the numerical results still show no sign of spurious negative undershoots. (ii) The maximum density grows rapidly from about $9.96\times 10^2$ to approximately $1.06\times 10^5$, indicating strong chemotactic aggregation. (iii) The discrete energy decreases monotonically, and the total cell mass is conserved up to round-off error. 
	
In summary, this example demonstrates that both the BE‑BCFD and PC‑BCFD schemes robustly preserve positivity for highly concentrated positive initial data with extremely small minima. We also note that the sufficient positivity-preserving condition in Theorem \ref{thm:pc-posi-preserve} is far from being satisfied in this test. For the middle-refinement grid with $M=80$ and $\gamma=1.285$, we have approximately $h_{\min}^2/2\sigma^6\approx 2.26\times 10^{-10}$, which is much smaller than the time stepsize $\tau=5\times 10^{-6}$. Moreover,
	\[
	\max_{\Omega} c^o(x, y)\approx 49.91,\qquad
	\max_{\Omega} e^{c^o(x, y)}\approx 4.74\times 10^{21}.
	\]
Thus the sufficient time-step restriction for the positivity of $\rho_h$ becomes extremely restrictive. Nevertheless, the computed minimum values of both $\rho_h$ and $c_h$ remain positive, further suggesting that the theoretical time-step condition is only sufficient and, in practice, can be quite liberal.
	
\section{Conclusions}\label{sec:conclusion}
In this paper, we have developed two structure-preserving BCFD schemes for the classical Keller--Segel chemotaxis system on staggered non-uniform grids; see \eqref{BE-BCFD:rewrite}--\eqref{BE-BCFD:IBc:rewrite} and \eqref{PC-BCFD:rewrite}--\eqref{PC-BCFD:IBc:rewrite}. The main conclusions are as follows:
\begin{itemize}
\item The proposed schemes preserve the key physical properties of the Keller–Segel system at the discrete level on general non-uniform spatial grids. Specifically, the temporal first-order scheme is rigorously shown to unconditionally preserve positivity and mass conservation, and it also satisfies a discrete energy-dissipation law. For the temporal second-order scheme, however, only positivity and mass conservation are proved; its energy dissipation is observed numerically but has not yet been theoretically established.

\item The proposed schemes retain the expected spatial and temporal accuracy on general non-uniform spatial grids. Furthermore, the staggered non-uniform BCFD formulation naturally accommodates local mesh refinement near the blow-up region (see Fig.~\ref{middle_grid}), which improves the simulation efficiency and accuracy for the Keller–Segel chemotaxis system while preserving the desired physical properties. 
\end{itemize}

In Ref.~\cite{XF'25}, we established a superconvergence result for a mass-conservative BCFD method on general non-uniform spatial grids for the Keller–Segel system \eqref{model:ks}. However, extending the convergence analysis to the structure-preserving schemes derived from the Slotboom reformulation considered in this paper remains an important and nontrivial direction for future investigation. Moreover, developing an unconditional positivity-preserving and energy-dissipating linear second-order finite difference scheme remains challenging.

\section*{CRediT authorship contribution statement}
\textbf{Jie Xu} : Methodology, Formal analysis, Investigation, Software, Writing--original draft.
\textbf{Hongfei Fu} : Methodology, Conceptualization, Supervision, Writing--review and editing, Funding acquisition.

\section*{Declaration of competing interest} The authors declare that they have no competing interests.	
\section*{Data availability}  Data are available upon reasonable request.
\section*{Acknowledgements}
This work was supported in part by the National Natural Science Foundation of China (No. 12131014), by the Shandong Provincial Natural Science Foundation (No. ZR2024MA023), and by the Shandong Provincial Key Laboratory of Stochastic System Control and Scientific Computing.

\bibliographystyle{elsarticle-num}
\bibliography{Ref_KS.bib}

\end{document}